\theoremstyle{plain}
\newtheorem{theorem}{Theorem}[section]
\newtheorem{lemma}{Lemma}[section]
\newtheorem{proposition}{Proposition}[section]
\newtheorem{remark}{Remark}[section] 
\theoremstyle{definition}
\theoremstyle{definition}
\newtheorem*{discussion*}{Discussion}
\newtheorem{example}{Example}[section]
\newtheorem{condition}{Condition} 
\newtheorem{assumption}{Assumption}
\newtheorem*{remark*}{Remark}
\newtheorem{corollary}{Corollary}[section]
\newcommand{\rset}{\mathbb{R}}
\newcommand{\ind}{\mathds{1}}
\newcommand{\abs}[1]{\lvert{#1}\rvert}
\newcommand{\eqd}{\stackrel{d}{=}}
\newcommand{\ffrac}[2]{\ensuremath{\frac{\displaystyle #1}{\displaystyle #2}}}
\newcommand{\un}[1]{\ind{\left\{#1\right\}}}
\newcommand{\indic}{\un} 
\newcommand{\PP}[1][]{\ifthenelse{\equal{#1}{}}{\ensuremath{\mathbb{P}}}{\ensuremath{\mathbb{P}\left( #1 \right) }}}
\newcommand{\EE}[1][]{\ifthenelse{\equal{#1}{}}{\ensuremath{\mathbb E}}{\ensuremath{{\mathbb E}\left[ #1 \right]}}}
\newcommand{\Var}[1][]{\ifthenelse{\equal{#1}{}}{\ensuremath{\mathrm{Var}}}{\ensuremath{{\mathrm{Var}}\left[ #1 \right]}}}
\newcommand{\Cov}[1][]{\ifthenelse{\equal{#1}{}}{\ensuremath{\mathrm{Cov}}}{\ensuremath{{\mathrm{Cov}}\left[ #1 \right]}}}
\DeclareMathOperator*{\argmin}{arg\,min}
\DeclareMathOperator{\range}{range}
\DeclareMathOperator{\sign}{sign}
\newcommand\ie{\emph{i.e.}\xspace}
\newcommand\iid{\ensuremath{\mathit{i.i.d.}}\xspace}
\newcommand{\ud}{\,\mathrm{d}}
\newcommand{\point}{\,\cdot\,}
\newcommand{\given}[1][{}]{\;\middle\vert\;{#1} }
\newcommand{\EVT}{\textsc{EVT}\xspace}
\newcommand{\Lasso}{\textsc{LASSO}\xspace}
\newcommand{\ERM}{\textsc{ERM}\xspace}
\newcommand{\CV}{\textsc{CV}\xspace}
\newcommand{\VC}{\textsc{VC}\xspace}
\newcommand{\EVA}{\textsc{EVA}\xspace}
\newcommand{\loo}{\textit{l.o.o.}\xspace}
\newcommand{\lpo}{\textit{l.p.o.}\xspace}
\newcommand\eg{\emph{e.g. }\xspace}
\newcommand{\wrt}{\textit{w.r.t.}\xspace}
\newcommand{\rhs}{\textit{r.h.s.}\xspace}
\newcommand{\TR}{\mathcal{R}}
\newcommand{\ER}[1][]{\widehat{ \mathcal{R}}}
\newcommand{\risk}{\mathcal{R}}
\newcommand{\rcvEx}{\widehat\risk_{\mathrm{CV},\alpha}}
\newcommand{\rCV}{\widehat\risk_{\mathrm{CV}}}
\newcommand{\data}{\mathcal{D}}
\newcommand{\DD}{\data}
\newcommand{\BiasCV}{\mathrm{Bias}}
\newcommand{\DevCV}{\mathrm{D_{cv}}}
\newcommand{\DevExt}{\mathrm{D_{t_\alpha}}}
\newcommand{\vapnikG}{\mathcal{V}_{\mathcal{G}}}
\newcommand{\Kfold}{\textrm{K-fold}\xspace}
\newcommand{\sphere}{S}
\numberwithin{equation}{section}
\begin{document}

\title{Cross-validation on Extreme Regions}

\author[a]{Anass Aghbalou}

\author[b]{Patrice Bertail} 

\author[c]{Fran\c{c}ois Portier}

\author[d]{Anne Sabourin}

\affil[a]{ LTCI, T\'el\'ecom Paris, Institut Polytechnique de Paris, Palaiseau, France}
\affil[b]{Universit\'e Paris Nanterre, MODAL'X, Nanterre, France}
\affil[c]{Ensai, CREST, Rennes, France}
\affil[d]{Universit\'e Paris Cit\'e, CNRS, MAP5, F-75006 Paris, France}




\maketitle

\begin{abstract}
  We conduct a non-asymptotic study of the Cross-Validation
  (CV) estimate of the generalization risk for learning algorithms
  dedicated to extreme regions of the covariates space. 
  In this context which has recently been analysed from an Extreme Value Analysis perspective, 
  the risk function measures the
  algorithm's error given that the norm of the input exceeds a high
  quantile. The main challenge within this framework is the negligible
  size of the extreme training sample with respect to the full sample
  size and the necessity to re-scale the risk function by a
  probability tending to zero. We open the road to a finite sample
  understanding of CV for extreme values by establishing two new
  results: an exponential probability bound on the \Kfold CV error and
  a polynomial probability bound on the leave-\textrm{p}-out CV. Our
  bounds are sharp in the sense that they match state-of-the-art
  guarantees for standard CV estimates while extending them to
  encompass a conditioning event of small probability. We illustrate
  the significance of our results regarding high dimensional
  classification in extreme regions via a Lasso-type logistic
  regression algorithm. The tightness of our bounds is investigated in
  numerical experiments.

{\bf  Keywords:} Extreme value analysis, cross-validation, concentration inequalities. 
\end{abstract}

\section{Introduction}
\label{sec:intro}

Cross-validation (\CV) is a most popular statistical learning tool for
estimating the generalization risk of an algorithm and for
hyper-parameter or model selection
(\cite{arlot2010survey,wager2020cross,bates2021cross}). Although the
performance of \CV\ has been analysed in a variety of
settings 
including density estimation \citep{arlot2008VFpen,arlot2016VFchoice}
or Least-squares regression
\citep{homrighausen2013lasso,xu2020rademacher}, the literature is
silent about theoretical guarantees of \CV\ when applied to algorithms
dedicated to Extreme Value Analysis (\EVA). The goal of this paper is
to make a first step in this direction.  We consider in particular
learning algorithms based on Empirical Risk Minimization (\ERM)
restricted to low probability regions of the covariate  space, 
recently
considered in
\cite{jalalzai2018binary,jalalzai2020heavy,clemencon2022concentration} and \cite{huet2023regression}.


Our study is originally motivated by several works in the \EVA literature in which  \CV comes as a natural  tool to evaluate an estimation error or to select hyper-parameters. 
In an unsupervised context, this includes in particular parametric inference of multivariate tail dependence (\cite{einmahl2012m,einmahl2018continuous,einmahl2016m,kiriliouk2019peaks}), where \CV\ could be naturally envisioned for evaluating  goodness-of-fit or for choosing  between competing models.
In the recent line of works concerned with dimension reduction in Multivariate Extremes (see  the review by \cite{engelke2021sparse} and the references therein)  the question of hyper-parameters and sparsity level selection cannot be avoided in practice. As an example, in \cite{goix2016sparse,goix2017sparse}, the number of subcones of $\rset^d$ supporting the limit tail measure, or equivalently the cut-off level below which the empirical mass is deemed negligible, must be chosen by the user. As noted in Remark~5 in \cite{goix2017sparse} this can be recast as a penalized risk minimization problem. In dimension reduction methods based on Principal Component Analysis \citep{cooley2019decompositions,jiang2020principal,drees2021principal} the dimension reduction space minimizes an empirical reconstruction risk and the dimension of the output must again be chosen by the user. Here CV could be used
to evaluate the reconstruction error, as opposed to the empirical risk computed on the training set which typically yields optimistic estimates, leading to  over-fitting. 
Clustering approaches to dimension reduction constitute still another example of \ERM\ frameworks that have successfully been generalized to  the context of \EVA \citep{janssen2020k,jalalzai2021feature} and where, among others, the number of clusters should be chosen.  
In a supervised context, extreme quantile regression has a long history in \EVT (see \emph{e.g.} \cite{daouia2013kernel,chernozhukov2017extremal,chavez2014extreme} and the references therein). For kernel based methods, choosing the kernel bandwith by \CV comes as a natural idea. Alternative approaches to kernel methods have been recently proposed  based on Gradient Boosting \citep{velthoen2021gradient}, Regression Trees \citep{farkas2021cyber} or Extremal Random Forests \citep{gnecco2022extremal}.
 These
approaches also come with tuning parameters. 
Finally, in the supervised framework of classification  in extreme regions  \citep{jalalzai2018binary,jalalzai2020heavy,clemencon2022concentration}, or in recent  extensions to continuous regression considered in~\cite{huet2023regression},  \CV\ is a natural candidate for estimating the generalization risk of the output or, in a high dimensional setting, for feature selection.

\paragraph{\bf Purpose of this work and leading example.} Our goal is
to open the road to a finite-sample understanding of the guarantees
enjoyed by \CV\ in algorithms dedicated to extreme values.  To fix
ideas, the learning problems we have in mind involve independent and identically distributed (\iid) data
$Z_i, i\le n$ in a sample space $\mathcal{Z}$,  
and a low
probability region $\mathbb{A}\subset \mathcal{Z}$, typically
$\mathbb{A} = \{ z \in \mathcal{Z}: \|z\|> t_\alpha \}$ for some
(semi)-norm $\|\point\|$ on $\mathcal{Z}$ and a large threshold $t_\alpha$ chosen as
the $(1-\alpha)$-quantile of $\|Z\|$ where $\alpha=k/n $ throughout
this paper. Our main focus is on the case $\alpha \ll 1$.  In such a
context, it is natural to measure the performance of an algorithm in
terms of an expected loss, \emph{conditional} on the rare event
$\|Z\|>t_\alpha$. This generic setting encompasses in particular the
problem of classification in extreme regions 
introduced in \cite{jalalzai2018binary}.  The goal is to construct a
discrimination function which performs well on tail regions of the
covariates. In our context, this amounts to considering a pair
$z=(x,y)$ where $x\in\rset^d$ is the covariate and $y\in\{-1,1\}$ is
the target. The semi-norm on $\mathcal{Z}$ is then chosen of the form
$\|z\| = \|x\|_{\rset^d}$ where the latter denotes an arbitrary norm
on $\rset^d$. This setting has been applied to Natural Language
Processing in \cite{jalalzai2020heavy}, and finite-sample statistical
guarantees accounting from possible marginal standardization of the
covariates are analysed in~\cite{clemencon2022concentration}, Section
4.2. 
In this paper we take this specific problem as our leading example,
since classification by means of \ERM is a particularly
illustrative 
and simple 
statistical learning task. 
Further details regarding the classification setting and its relevance
for \EVA are postponed to Section~\ref{sec:framework} and
Section~\ref{sec:technicalClassif} in the supplementary material.
To illustrate the significance of our findings in terms of parameter
selection 
we consider a logistic-\Lasso regression algorithm trained on extremes
observations $\|Z\|\geq t_\alpha$, and we propose to choose the level
of the $\ell^1$-norm constraint on the parameter by 
a standard \Kfold CV
procedure. 
Such a sparsity-inducing classification algorithm is particularly
attractive with high dimensional covariates, as the curse of
dimensionality is particularly problematic in \EVA due to the reduced
effective sample size $k\ll n$.  Our results show that \Kfold permits
selecting a model within a finite collection in a risk consistent
manner.

In addition the classification example, 
our generic setting encompasses in principle also extreme quantile
regression. 
Indeed one may define a  semi-norm $\|(x,y)\| = \max(0,y)$ and $\mathbb{A}$ in terms of extreme values of the target $Y$,
typically $\mathbb{A} = \{(x,y): y \ge t(x) \}$  where $t(x)$ is a
high threshold which may depend of $x$. 
However the technical
additional assumptions that we make in Section~\ref{"ERM-notations"}
are taylored to our classification example, so that our results do not
apply immediately to extreme quantile regression. Further discussion
on this topic is deferred to Section~\ref{sec:framework},
Remark~\ref{rem:quantileReg}. 


\paragraph{\bf Related works regarding Cross-Validation.}


As mentioned above \CV\ may serve as a tool for 
$(i)$ risk estimation, $(ii)$ model selection. Sharp guarantees regarding the former task are typically needed as an intermediate step  to derive guarantees for the latter task, as discussed \emph{e.g.} in \cite{Laan2006}.  The model selection task  itself may be envisioned from the perspective of  \mbox{$(ii$--$a)$} estimation, where the goal is to minimize the risk attached to the final output, and
\mbox{$(ii$--$b)$} model identification,  where the goal is to select the `smallest' possible `true' model. In the present work we consider mainly task $(i)$. As a by-product, our results allow to derive minimal guarantees regarding task \mbox{$(ii$--$a)$}.
For an in depth review
of \CV\ for model selection and  risk estimation
 we refer the reader to
\cite{arlot2010survey} and the
reference therein or \cite{wager2020cross,bates2021cross}  for recent discussions.

A popular working assumption in the statistical learning literature  is \emph{algorithmic
  stability}
(\cite{Rogers1978,DEvroy-79,anthony1998cross,kearns1999algorithmic,bousquet2002stability}). In
this paper we adopt instead the framework of \ERM\ over a class of
predictors with finite VC-dimension in order to stay  close to  existing statistical learning viewpoints on \EVT\
mentioned above, leaving the question of stable algorithms to future
research.

Our main concern here is risk estimation  in  a non-asymptotic setting.
In this context, \cite{DEvroy-79}, \cite{anthony1998cross} and \cite{kearns1999algorithmic} show polynomial upper bounds on the \emph{leave-one-out} (\loo) error 
(\emph{i.e.} the probability of
deviation has a power decay) 
under various  weak stability assumptions, see also \cite{cornec10,cornec17}. In addition, \cite{kearns1999algorithmic} (Lemma 4.2) show that \ERM\ over a VC-class is in particular  error stable. Notice that exponential bounds 
(\emph{i.e.} ensuring that the probability of deviation is exponentially small) 
for the \loo\  
can be derived under the stronger assumption of \emph{uniform stability} as  \emph{e.g.} in \cite{bousquet2002stability}. In view of these facts it is reasonable to expect no more than a polynomial upper bound in our \ERM\ framework on extreme regions for the \loo and the \emph{leave-p-out} (\lpo) without further assumptions. Concerning the \Kfold\ scheme the literature is scarcer than for the \loo\ regarding upper bouds for risk estimation.  To our best knowledge the only existing non-asymptotic bounds in this respect are derived in \cite{cornec10,cornec17}. In the latter reference an upper bound is obtained for a wide range of \CV\ schemes. The bound incorporates a minimum between  an exponential and a polynomial terms, involving respectively the size of the validation and the training sets. This yields an exponential bound for the \Kfold\ since in the latter scheme the validation size is of the same order as the full sample size, contrarily to the \loo\ and \lpo\  
Both the exponential and polynomial upper bounds in the above references are \emph{sanity check} guarantees, which do not prove that the \CV\ risk estimate outperforms neither the hold-out nor the training risk estimate. However they prove that \CV\ is a consistent approach for risk estimation, and as a by-product, for model selection with an estimation purpose (task $(ii$--$a)$).  This is  not necessarily the case for model selection with an identification purpose  as discussed in several papers such as~\cite{wager2020cross}.

Going beyond sanity check bounds for \CV\ risk estimators remains an open challenge in the general case.
One natural question to ask 
is whether using
several training/testing folds improves upon the hold-out method (a
single split) or upon using the empirical risk on the training set
itself. 
Although the dependence between the different folds complicates the
analysis, partial answers have been brought in various specific
settings such as density estimation
\citep{arlot2008VFpen,arlot2016VFchoice} or \Lasso regression
\citep{homrighausen2013lasso,xu2020rademacher}. Restricting the analysis to the variance of the estimator, \cite{Blum1999} prove that  the \Kfold\  reduces the variance  and the amount of the latter is quantified in \cite{kale2011cross} and \cite{kumar2013near} under stability assumptions. Such improved guarantees in the context of rare events are left to future research.

Recent works in \EVT
\citep{boucheron2012concentration,boucheron2015tail,carpentier2015adaptive,goix15,jalalzai2018binary,lhaut2021uniform,clemencon2022concentration}
focus on finite-sample controls of the deviations of the empirical
measure in rare regions, with non-asymptotic upper bounds of the
desired order $1/\sqrt{k}$, thus matching the typical asymptotic rates
available from the Extreme Value literature (see \emph{e.g.}
\cite{de2006extreme}, chap. 3, 4). However the theoretical properties
of \CV estimates are notoriously difficult to establish due to the
lack of independence between the different terms of the average
involved in a \CV scheme. Our aim is to obtain \emph{sanity check
  bounds} \citep{kearns1999algorithmic,cornec10,cornec17} regarding
the deviations of the \CV estimate, that is, bounds that are of the
same order of magnitude as the ones regarding the empirical risk
itself, of order $\mathcal{O}(1/\sqrt{k})$ in our case, with
multiplicative constants depending on the complexity of the problem.

\paragraph{\bf Contributions.}
We provide three new results for \CV-based  risk estimation and model selection in a rare region of probability $\alpha\ll 1$:

\emph{(i)} An exponential probability bound involving the size of the validation set, which yields a sanity check bound in the context of rare events for the K-fold \CV scheme but not the \lpo\ scheme as the size of the validation set in this case remains constant, equal to $p$.

\emph{(ii)}  A polynomial upper bound, which outperforms the exponential one in the case of the \lpo because it only involves the size of the training set.

\emph{(iii)} For the sake of illustration, we apply our  exponential upper bound to the purpose of model selection within a finite family of models in logistic-\Lasso regression. In particular we obtain an upper bound on the excess risk  scaling as  $\mathcal{O}(1/\sqrt{n\alpha})$ \wrt the sample size with a multiplicative factor depending logarithmically on the number of candidate models.

 Both our contributions \emph{(i)} and \emph{(ii)} achieve state-of-the-art guarantees for $\alpha=1$, up to multiplicative constants and negligible terms. More precisely for $\alpha=1$  our exponential (\emph{resp.} polynomial) upper bound is of the same  nature as the ones in \cite{cornec17}  (\emph{resp.} \cite{kearns1999algorithmic} and \cite{cornec17}). However covering the case $\alpha\ll 1$ requires  different proof techniques accounting for the low variance (driven by $\alpha$) of the random variables at stake. In particular we use a Bernstein-type version of the bounded difference inequality due to \cite{McDiarmid98conc}, following in the footsteps of  previous statistical learning works devoted to \EVT\ mentioned above in the spirit of \cite{goix15} and \cite{lhaut2021uniform}. A distinctive challenge in the present work though is the complicated nature of the variable of interest, that is the cross-validation risk which involves a sum of dependent terms differently from the empirical risk studied in the latter references. 

  {\bf Outline.}
 The statistical framework envisioned in this paper is introduced in Section~\ref{sec:framework}. Our main results Theorems~\ref{theo:main-sanity-rare} and~\ref{theo:main-sanity-rare-lpo} 
 are presented respectively in Section~\ref{sec:sanity-exponential} and Section~\ref{sec:sanity-polynomial}. Guarantees regarding \Kfold for logistic-\Lasso regression are derived in Section~\ref{sec:applications}. 
 We illustrate the tightness of our bounds in numerical experiments reported in Section~\ref{sec:expes}. Possible extensions and leads for future research are discussed in Section~\ref{sec:disc}. The supplementary material includes additional details regarding Classification in Extreme Regions (Section~\ref{sec:technicalClassif}), generic statistical tools used in our proofs (Section~\ref{sec:genericTools}), as well as intermediate technical results and detailed proofs of our main results  (Section~\ref{sec:detailedProofs}).


\section{Extreme values, extreme risk and cross-validation: framework}\label{sec:framework}
\subsection{Conditional risk in an extreme region}

Consider a random element $Z$ valued in a sample space $\mathcal{Z}$
and a low probability region $\mathbb{A}\subset \mathcal{Z}$ such that
$\PP[Z\in \mathbb{A}] = \alpha$ with $0<\alpha \ll 1$. The
probabilistic behavior of $Z$ given that $Z\in \mathbb{A}$ is a main
concern in Extreme Value Analysis. Conditioning upon
$Z\in \mathbb{A}$, or alternatively rescaling the probability
distribution by an appropriate sequence, is a central idea in the
asymptotic \EVT literature related to the tail empirical processes,
see \emph{e.g.} the review paper from \cite{einmahl1992limit} or the
recent work from \cite{bobbia2021donsker} and the references therein
in relation to local empirical processes.  Uniform controls of the
deviations of the empirical measure based on an \iid sample
$Z_i, i\le n$ over such a region and, by extension, deviations of an
empirical risk conditional to $Z\in\mathbb{A}$, have been analyzed in
a non-asymptotic setting in several works over the past few years, in
various statistical contexts, such as empirical estimation of the
stable tail  dependence function \citep{goix15} and of the angular
measure \citep{clemencon2022concentration}, classification in extreme
regions \citep{jalalzai2018binary}, construction of Mass-Volume sets
for anomaly detection \citep{thomas2017anomaly}, support recovery
\citep{goix2017sparse}, principal component analysis
\citep{drees2021principal}, graphical models
\citep{engelke2021learning,engelke2022structure}. Recently
\cite{lhaut2021uniform} compute universal constants involved in the
upper bounds. They also discuss various conditioning and combinatoric
arguments and concentration tools for such non-asymptotic control.
 
Here we take as a leading example the problem of \ERM\ classification
in extreme regions, following in the footsteps of
\cite{jalalzai2018binary}, \cite{jalalzai2020heavy} and
\cite{clemencon2022concentration}. The sample space is
$\mathcal{Z} = \mathcal{X}\times\mathcal{Y}$ with
$\mathcal{X}\subset \rset^d$ and $\mathcal{Y}= \{-1,+1\}$.  The low
probability region of interest is then
$ \mathbb{A} = \{(x,y) \in \mathcal{Z}: \lVert x \rVert \geq t_\alpha
\}$ where $t_\alpha$ is the $1-\alpha$ quantile of a norm
$\lVert X \rVert$ on $\mathcal{X}$.  Notice that in our setting, the
probability $\alpha$ is known (chosen by the user) whereas the
threshold $t_\alpha$ -- thus also $\mathbb{A}$ -- is unknown because
the law of $X$ is unknown. Given a class $\mathcal G$ of
discrimination rules $g: \mathcal{X}\to \rset$ and a loss function
$c : \mathcal G \times \mathcal Z \to
\rset$, 
the conditional risk of $g\in \mathcal G$ over the rare region
$\mathbb{A}$ is 
\begin{equation}
\label{eq:def-risk-extreme}
\mathcal{R}_{\alpha}(g)=\EE\left[ c(g,Z)\mid Z \in \mathbb{A} \right].
\end{equation}

Upon appropriate regularity assumptions (regular variation of class
distributions), it is possible to properly define a nontrivial
classification problem conditional to $\|x\|>t_\alpha$, in the limit
$t_\alpha\to\infty$.  Namely, 
for the $0-1$ loss associated with binary classifiers, under
appropriate regular variation assumptions regarding the class
distributions $\mathcal{L}(X | Y= \sigma 1), \sigma\in\{+,-\}$, the
conditional risk $\mathcal{R}_\alpha$ of an \emph{angular} classifier
of the kind $g(x) = \tilde g(\theta(x))$, with
$\theta(x) = \|x\|^{-1}x$, converges as $\alpha\to 0$ to an asymptotic
risk $\mathcal{R}_\infty$. The latter is the out-of-sample risk of $g$
in the extreme region.
Importantly, it is shown in the latter reference that learning an
angular classifier with nearly optimal properties above an
asymptotically high level can be done based on the few largest
training points (where `large' should be understood in the sense of
the norm of the covariates). 
For  
self-containment we provide a brief account of this
framework in Section~\ref{sec:backgroundClassif} of the supplementary
material. 

%

\begin{example}[Prediction in heavy-tailed vectors]\label{ex:relevanceRalpha-RV}
  We provide here an example of application of the classification setting considered in this paper to a traditional \EVA task, which is predicting the occurrence of an extreme event. Indeed predicting a binary output $Y$ based on large covariates may seem somewhat disconnected from standard \EVA frameworks typically concerned with the analysis of extreme risks. The  present example has not been considered in the original paper \cite{jalalzai2018binary} and highlights the relevance of our working assumptions. 
  Consider the general problem of predicting the  missing value of one component (say $Z_{d+1}$)  of a random vector $Z = ( Z_1,\ldots, Z_{d+1})\in\rset^{d+1}$,  based on the partial observation  $(Z_{1}, \ldots, Z_d)$,  given that the latter  is large. 
  A first step would be to predict whether $Z_{d+1}$ would also be large. With this in mind let
\begin{equation*}
  X  = (Z_1,\ldots, Z_{d}) \text{ and }
  Y = \un{ \frac{Z_{d+1}}{\| (Z_1,\ldots, Z_{d+1}) \| }> c  } ,  \end{equation*}
  where $\|\point\|$ is the $\ell^p$ norm for some $p\in[1,\infty)$ and  $c \in (0,1)$ may  be chosen depending on the downstream task. As an example   $c=(1/(d+1))^{1/p}$ if the  target event is that $Z_{d+1}>0$ and  $|Z_{d+1}|^p$ is at least as large as the average value of the $|Z_j|^p$ 's for $j\le d+1$.  
  We prove in Appendix~\ref{sec:appliPrediction} that the pair $(X,Y)$ thus defined satisfies the requirements of \cite{jalalzai2018binary}'s setting, if $Z$ is  a heavy-tailed random vector $Z = (Z_1,\ldots, Z_{d+1})\in\rset^{d+1}$, more precisely, 
  if $Z$ has a density with respect to the Lebesgue measure which is regularly varying. This   `regular variation of densities' assumption  has been used on multiple occasions in \EVT (\cite{de1987regular,Cai2011}).  
  
  In addition, the recent work~\cite{huet2023regression} extends the binary classification framework on extreme covariates, to the case where the goal is to predict a continuous target $Y\in[-M,M]$. Our Assumptions 1--4   encompass  also this regression framework since the squared error loss function is bounded as long as  the prediction functions $g$ are chosen in a uniformly bounded class and $Y$ is bounded.  Again, the requirement that $Y$ is bounded may seem at odds with standard \EVA applications. However the authors show (Example~2 in the cited reference) that this regression setting encompasses prediction of (missing)  components in multivariate regularly varying vectors, given that the other components are large. This is achieved by setting, with the same notations as above, $Y = Z_{d+1}/\|(Z_1, \ldots, Z_{d+1})\|$. Then any prediction $\hat Y$ for $Y$ may be plugged-in to obtain a prediction $\hat{Z}_{d+1}$ using the identity
  $ Z_{d+1} = Y \| (Z_{1}, \ldots, Z_d)\|_p /( 1 - |Y|^p )^{1/p} $.  
For simplicity, we focus solely on the classification task  in our application to LASSO (Section~\ref{sec:exampleLogisticRegression} and Section~\ref{sec:applications}) and simulation study (Section~\ref{sec:expes}).   
\end{example}

  \begin{remark}[Another example: Extreme quantile regression]\label{rem:quantileReg}
   Another natural example of a supervised problem mentioned in the introduction  is extreme quantile regression, which is a well-documented topic in EVT (see \emph{e.g.} the book chapter~\cite{chernozhukov2017extremal} and the references therein, \cite{daouia2013kernel}) with some extensions based on expectiles~(\cite{girard2021extreme}). In this case, the
  target variable $Y$ is generally unbounded and takes values in
  $\rset_+$.  Extreme events are then defined relatively to $Y$.  Our general problem formulation also encompasses this situation 
 upon defining the semi-norm on $\mathcal{Z}$ as $\|z\| = y$.
  In~\cite{farkas2021cyber},~\cite{velthoen2021gradient} and~\cite{gnecco2022extremal}, the authors place themselves in a parametric setting and consider respectively regression trees, tree-based gradient boosting, and extremal  random forest, for regressing  GPD parameters on the covariates and obtain extreme quantiles by a plug-in strategy.
As the three latter methods are likelihood-based,  cross-validating a likelihood criterion comes as a natural idea for  goodness-of-fit assessment, or for the choice of  hyper-parameters. In practice  the number of splits in~\cite{farkas2021cyber} is chosen using a complexity penalty, with a multiplicative factor which is indeed chosen by \CV. Similarly~\cite{velthoen2021gradient} and~\cite{gnecco2022extremal} respectively recommend choosing the number of trees and the minimum node size by \Kfold \CV. 

Mathematically speaking, classification  is
  somewhat simpler than quantile regression,  because the
  loss functions involved in ERM can be chosen bounded, which matches
  our Assumption~\ref{assum:cost-func} (Section~\ref{"ERM-notations"}).  Relaxing our boundedness assumption would extend the scope of application of our results in the next section  to extreme quantile regression. We conjecture that it is possible to do so with an adequate control of the tails of the loss $c(g,Z)$. This is left for future works. 
  \end{remark}

In practice, the quantity of interest is not the risk of a fixed 
discrimination function $g$, but the risk of the specific $\hat g$ issued by an algorithm, also called \emph{learning rule}, given training data $\DD_n=(Z_1,\ldots, Z_n)$. Here and throughout, we assume that $\DD_n$ is  a collection of independent and identically distributed random vectors with common distribution $P$ and $S_n=\{ 1, \ldots, n \}$ refers to the full index set. 
Formally,  a  learning rule can be viewed as a function  $\Psi: \sqcup_{m \le n} \mathcal{S}_m \to \mathcal{G}$, where $\mathcal{S}_m$  is the family of subsets of $\{1,\ldots,n\}$ of size $m\le n$, and $\Psi(S) = \hat g$ is the output of the rule trained on $\{Z_i,i\in S\}$.
The quantity that we would like to estimate is the generalization risk of the learning rule trained on $\DD_n$, $\mathcal{R}_{\alpha}\big(\Psi(S_n)\big)$. 

Given a subsample $S\subset \{1,\ldots , n\}$, for some fixed $g$, an  empirical version of $\mathcal{R}_\alpha(g)$   based on  $S $ is 
\begin{equation}
  \label{eq:risk_est}
\widehat{\mathcal{R}}_{\alpha}(g, S)=\frac{1}{\alpha n_S} \sum_{i\in S} c(g,Z_i)\indic{\lVert X_{i}\rVert>\lVert X_{(\lfloor \alpha n \rfloor)}\rVert },  
\end{equation}
where $n_S=card(S)$ and $\left\|X_{(1)}\right\| \geq \ldots \geq\left\|X_{(n)}\right\|$ are the (reverse) order statistics of the sample $(\left\|X_i\right\|)_{i= 1,\ldots, n }$.

 Notice that the random threshold $\|X_{(\lfloor \alpha n \rfloor)}\|$ used for selecting extreme observations in the risk estimate (\ref{eq:risk_est}) is defined using the full  index set  $S_n$, not the particular  subsample $S$. An alternative strategy would be to let the random threshold depend on the particular subsample $S$, using \emph{e.g.} the $\lfloor \alpha n_S \rfloor^{th}$ order statistic within $S$. In the present work we limit ourselves to the analysis of \CV estimates of the risk based on the common threshold $\|X_{(\lfloor \alpha n \rfloor)}\|$ which turns out to be  convenient in our proofs, see \emph{e.g.} the argument leading to~(\ref{'c_ext'}) in the Appendix. 
 Whether it is possible to obtain similar or better guarantees for the alternative strategy based on a variable threshold remains an open question and would require in any case a substantial modification of our proof techniques. 

Equipped with the  definition of an empirical risk $\widehat{\mathcal{R}}_{\alpha}(g,S)$ in~\eqref{eq:risk_est},   the \emph{hold-out} estimator of the  risk $\mathcal{R}_\alpha(\Psi(S_n))$ based on a  validation set $V\subset \{1,\ldots, n\}$ and a training set $T = \{1,\ldots, n\}\backslash V$ takes the simple form
$	\widehat{\mathcal{R}}_{\alpha}( \Psi({T}) , V)$. The  \CV\ strategy for estimating $\mathcal{R}_\alpha(\Psi(S_n))$ consists in averaging  such hold-out estimates over a family of 
validation sets  $V_{1:K} = (V_j)_{j=1,\ldots, K}$,  where $V_j\subset\{1,\ldots,n\}$. 
Namely the \CV\  estimator 
is

\begin{equation}\label{"rcvEx-def"}
	\widehat{\mathcal{R}}_{CV,\alpha}(\Psi,V_{1 : K})=
	\frac{1}{ K} \sum_{j= 1}^{K} 
	\ \widehat{\mathcal{R}}_{\alpha}( \Psi({T_j}) , V_j),
\end{equation}
where $T_j = \{1,\ldots, n\} \backslash V_j$.

\begin{remark}[Focus on the estimation error at fixed level $\alpha$  and bias term]\label{rem:EstimationError-noRV}  
  The \ERM strategy proposed in \cite{jalalzai2018binary} and \cite{clemencon2022concentration} in order to choose an appropriate classifier $\hat g$ regarding $\mathcal{R}_\infty$ consists in minimizing the  empirical version of the subasymptotic risk $\mathcal{R}_\alpha$, 
  $\widehat{\mathcal{R}}_\alpha(g, S_n)$, where $S_n$ is the full index set $\{1,\ldots, n\}$. The statistical guarantees obtained in these papers concern the uniform deviations of $\widehat{\mathcal{R}}_\alpha$ and as a consequence the excess  $\mathcal{R}_\alpha$-risk of the empirical risk minimizer $\hat g$. The bias term $\mathcal{R}_\infty - \mathcal{R}_\alpha$ is left aside from  their statistical analysis. However it is shown in \cite{clemencon2022concentration} (Remark 3.3 and Appendix D) that for typical multivariate heavy-tailed vectors such as  multivariate Cauchy random variables, the bias is of order $\mathcal{O}(\alpha)$ and is thus negligible compared with deviation terms.  

  In the present work, we take a similar approach  in that  our main focus is on  \CV-based estimation of $\mathcal{R}_\alpha$. We thus leave the bias term outside our scope. One benefit from this strategy is that our work does not rely on any regular variation assumptions, leaving open the possibility of applications to other contexts outside  \EVA where rare events play a major role, such as anomaly detection or imbalanced classification.  
\end{remark}

\subsection{Working assumptions}\label{"ERM-notations"}

Our main results hold under Assumptions~\ref{assum:ERM-settings} to~\ref{assum:cost-func}  introduced below.

\begin{assumption}[ERM algorithm] \label{assum:ERM-settings}
  The learning rule denoted by $\Psi_\alpha$,   is  an   empirical conditional risk minimizer for the probability level $\alpha$, 
	\begin{equation}\label{"g_alpha_j_def"}
	\Psi_\alpha({S})=\argmin_{g \in \mathcal{G}}\widehat{\mathcal{R}}_{\alpha}(g, S).
	\end{equation}	
\end{assumption}

For clarity reasons, we suppose further that $n$ is divisible by $K$ so that $n/K$ is an integer. This condition guarantees, in the case of $K$-fold cross-validation, that all validation sets have the same cardinal $n_{V}=n/K$. We also need the sequence of validation/training sets to satisfy a certain balance condition which is expressed below.

\begin{assumption}[\CV scheme balance condition]\label{assum:mask-property}
	The sequence of validation sets  $V_1,V_2,\dots V_K$  satisfies
	\begin{equation}\label{"card-condition"}
	card(V_j)=n_V\quad \forall j \in \{1,\ldots, K\},
	\end{equation}
	for some $n_V \in \{1,\ldots, n\}$.\,Moreover it holds that 
	\begin{equation}\label{"key-condition"}
	\frac{1}{K}\sum_{j=1}^{K} {\indic{l \in V_j} }=\frac{n_V}{n} \quad \forall l \in \{1,\ldots, n\}. 
	\end{equation}
\end{assumption}

The next lemma ensures that Assumption~\ref{assum:mask-property} holds true for the standard \CV  procedures (K-fold and \lpo) and that an identity similar to~\eqref{"key-condition"} is also valid for the training sets $T_j$. The proof is provided in Appendix~\ref{"proof-train-test-cond"}.
\begin{lemma}
	\label{"train-test-cond"}
	If $K$ divides $n$, for the leave-one-out, the
        leave-$p$-out,\,and the $K$-fold procedures, the validation
        sets $V_{1:K}$ satisfy
        Assumption~\ref{assum:mask-property}. Also the training
        sets 
	$T_{1:K}$
        satisfy 
	\begin{equation*}
	\frac{1}{K}\sum_{j=1}^{K}\ffrac{\indic{l \in T_j} }{n_{T}} 
        =\frac{1}{n} \quad \forall l \in \{1,\ldots, n\}.
	\end{equation*}
	
\end{lemma}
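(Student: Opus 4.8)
The plan is to treat the lemma's two claims in turn: first I would verify Assumption~\ref{assum:mask-property} for each of the three schemes case by case, then deduce the training-set identity purely algebraically from the mask property. The cardinality condition \eqref{"card-condition"} is settled by inspection: for leave-one-out every mask is a singleton, so $n_V=1$; for leave-$p$-out every mask is a $p$-subset, so $n_V=p$; for $K$-fold the divisibility $K\mid n$ ensures each fold has exactly $n_V=n/K$ elements. In all three schemes the cardinality is constant in $j$, as required.

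The balance condition \eqref{"key-condition"} is the heart of the matter: it demands that each index $l$ lie in the same fraction $n_V/n$ of the masks. For leave-one-out and $K$-fold this is immediate, since the masks partition $\llbracket 1,n\rrbracket$, whence $\sum_{j=1}^K\indic{l\in V_j}=1$ for every $l$; dividing by $K$ gives $1/n=n_V/n$ in the leave-one-out case ($K=n$, $n_V=1$) and $1/K=(n/K)/n=n_V/n$ in the $K$-fold case. For leave-$p$-out the masks range over \emph{all} $\binom{n}{p}$ subsets of size $p$, so a fixed index belongs to exactly $\binom{n-1}{p-1}$ of them, and the identity $\binom{n-1}{p-1}/\binom{n}{p}=p/n=n_V/n$ closes this case. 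This establishes Assumption~\ref{assum:mask-property} for all three procedures.

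Finally, for the training sets I would write $T_j=\llbracket 1,n\rrbracket\setminus V_j$, so that $\indic{l\in T_j}=1-\indic{l\in V_j}$ and $n_T=n-n_V$ is constant in $j$; the identity then follows from the balance condition just proved,
\[
\frac{1}{K}\sum_{j=1}^{K}\frac{\indic{l\in T_j}}{n_T}
=\frac{1}{n_T}\Big(1-\frac{1}{K}\sum_{j=1}^{K}\indic{l\in V_j}\Big)
=\frac{1}{n_T}\Big(1-\frac{n_V}{n}\Big)
=\frac{1}{n_T}\cdot\frac{n_T}{n}=\frac{1}{n}.
\]
I expect the only non-routine point to be the leave-$p$-out balance computation, where one must recognize that the scheme ranges over all $p$-subsets and invoke $\binom{n-1}{p-1}/\binom{n}{p}=p/n$; the partition argument for the other two schemes and the final algebraic manipulation are elementary.
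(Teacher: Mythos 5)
Your proof is correct and follows essentially the same route as the paper: case-by-case verification of the cardinality and balance conditions, with the partition argument for $K$-fold and the count $\binom{n-1}{p-1}$ together with $n\binom{n-1}{p-1}=p\binom{n}{p}$ for leave-$p$-out. The only (minor, and arguably cleaner) difference is that you derive the training-set identity once and for all from $\indic{l\in T_j}=1-\indic{l\in V_j}$ and $n_T=n-n_V$, whereas the paper repeats the counting argument for the $T_j$ in each scheme.
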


\begin{remark}\label{rem:KdividesN}
  The condition that $K$ divides $n$ is required for the  $K$-fold  \CV only, to ensure that  $card(V_j)=n_{V}$ for all $j$. 
  However straightforward extensions of our results can be obtained in the case where $K$ does not divide $n$ at the price of some notational complexity. 
\end{remark}

We now introduce two assumptions relative to the function class $\mathcal G$ and the cost function $c$. They shall be useful to control the fluctuation of the underlying empirical process. First we require the following standard complexity restriction on the family of functions $(x,y) \mapsto c(g(x), y)$ when $g$ lies in $ \mathcal G$. 
  We refer to \cite{Vapnik1998} for standard definitions of Vapnik-Chervonenkis (VC) dimensions and classes,  and standard concepts of statistical learning.

\begin{assumption}[Finite VC-dimension]\label{assum:hypo-class}
  The family $\mathcal{G}$ of predictors and the cost function $c$ are such that the class of functions $\{z\mapsto c(g,z) = c(g(x),y), g\in\mathcal{G}\}$ 
  has a finite Vapnik-Chervonenkis (VC)-dimension $\vapnikG$, \ie the family of subgraphs
  $\big\{ \{(x,y, t): t < c(g(x), y) \}: t \in \rset, (x,y) \in \mathcal{Z}, g \in \mathcal{G} \big\}$ has VC-dimension $\vapnikG$.
      \end{assumption}
This complexity assumption mainly  allows us to use a uniform concentration inequality from \cite{gine2001consistency}, which requires that the covering number for the $L_2$ norm of this family of functions decrease polynomially (with exponent $\vapnikG$) (see their condition (2.1)). We may thus as well assume the latter weaker condition, which is sometimes easier to check in practice,  instead of Assumption~\ref{assum:hypo-class},  without altering our results.   
                  
For simplicity  we limit ourselves to a cost function bounded by $1$. Our result may be extended to any bounded cost function at the price of a multiplicative scaling factor.
\begin{assumption}[Normalized cost function]\label{assum:cost-func}
	The cost function $c$ is non-negative and bounded by $1$,
	\[ 0 \leq c(g,Z) \leq 1 \quad\quad \forall (g,Z) \in  \mathcal{G}\times \mathcal{Z}. \]
     This hypothesis is clearly satisfied for the  Hamming loss $c(g,Z) = \un{ g(X) \neq Y }$.
   \end{assumption}
   \begin{remark}[Boundedness assumption]\label{rem:bounded}   
The logistic loss considered in our illustrative application (Section~\ref{sec:exampleLogisticRegression} and Section~\ref{sec:applications}) is not bounded in general. However, in the context of classification in extreme regions,  we consider angular classifiers, with a $\ell_1$ constraint on the parameter $\beta$, which amounts to a boundedness assumption on the loss. Indeed
$|\beta^\top\theta| \le \max_{j\le d} |\theta_j|\sum_{j\le d} |\beta_j|$, whence, for any $t>0$, 
$\sup_{\|\beta\|_1\le t, \|\theta\|_\infty = 1 } |\beta^\top\theta| \le t$ for any $t>0$.
  Of course, it would be interesting to weaken this boundedness assumption regarding the cost to extend the scope of our theoretical results. 
\end{remark}

\subsection{Illustration: selecting a regularization parameter in high-dimensional classification on extreme covariates
}\label{sec:exampleLogisticRegression}

As an illustration of how \CV may be used for model selection in the classification setting introduced in Section~\ref{ex:relevanceRalpha-RV} under our working assumptions,  we 
consider the typical problem of selecting a regularization parameter
for 
high-dimensional classification. When the dimension $d$ of the
covariate variable $X$ is large, a well documented way to reduce the
dimension of the predictor is to add a non-differentiable penalty term
to a (convex) \ERM problem, or equivalently to solve a convex
minimization problem under sparsity inducing
constraints. 
We thus consider a LASSO-type logistic regression problem with
discrimination functions $g_\beta$ indexed by a $d$-dimensional
parameter $\beta$. In Section~\ref{sec:surrogateLoss} of the
supplementary material we prove that the main findings of
\cite{jalalzai2018binary} remain valid with the logistic loss instead of the Hamming loss  originally considered in~\cite{jalalzai2018binary}.
These findings suggest restricting the
attention to angular discrimination functions.  In this context,
$g_\beta(x)= \beta^\top\theta(x)$. Recall that
$\theta(x) = \|x\|^{-1}x$ for some norm $\|\,\cdot\,\|$. In our
experiments we shall choose the sup-norm.  The logistic loss function
is then, for $z = (x,y)\in \rset^d\times\{-1,1\}$, 
        \begin{equation*}
          c(g_\beta, z)   =  \log \left( 1+\exp\left(- \beta^\top \theta(x)  x \right)\right). 
\end{equation*}
 Because it is mathematically convenient to handle bounded losses, we  impose (see Assumption~\ref{rem:bounded}) that the  cost functions are bounded. This is not the case in general for the logistic loss above without further assumptions on $\beta$. For this reason we restrict the scope of our analysis to the constrained logistic LASSO, \ie we impose a hard $\ell_1$ constraint on the parameter $\beta$. The learning rule $\Psi_{\alpha,u} $ for the constrained Logistic-\Lasso  with $\ell_1$ constraint $\|\beta\|_1\le u$ is thus
\begin{equation}\label{def:lasso-constraine}
  \begin{aligned}
          \widehat \beta_u  &=  \Psi_{\alpha,u}(S) = 	\argmin_{g \in {\mathcal{G}}_u } \ER_\alpha(g, S),   \\
     \text{ where }      \mathcal{G}_u & = \{g_\beta, \beta\in \rset^d, \|\beta\|_1\le u  \}\\
     \text{ and } \ER_\alpha(g_\beta, S) &  = \frac{1}{\alpha n_S} \sum_{i\in S}  \log \left( 1+\exp\left(- \beta^\top \theta(X_i)  Y_i \right)\right)\un{ \| X_i\|  > \| X_{(\lfloor \alpha n \rfloor )}\|}. 
  \end{aligned}
\end{equation}

\begin{remark}[Binary versus continuous outputs, surrogate loss functions]\label{rem:convexLoss}
In \cite{jalalzai2018binary} and \cite{clemencon2022concentration} the  analysis is limited to binary classifiers $g(x) \in \{-1,1\}$ and  the $0$-$1$ loss $c(g,(x,y))= \un{g(x)\neq y}$. Extending their results to more general cost functions such as convex surrogate losses would be an interesting avenue for further research, leveraging  ideas summarized in the review paper from \cite{boucheron2005theory}, Section~4, and the references therein, in particular \cite{zhang2004statistical}. This would allow  to cover the case of computationally realistic algorithms such as Support Vector Machines or logistic regression.  In the present paper we  take a step towards this end and consider general real-valued discrimination functions $g$ with a bounded loss function $c$, as made precise in our working assumptions~\ref{assum:ERM-settings}--\ref{assum:cost-func} listed in Section~\ref{"ERM-notations"} and exemplified with the logistic example. 
We analyse   the deviations of the \CV\ estimate with respect to the  (constrained) logistic expected loss itself. However we do not relate the latter convex surrogate loss  with the deviations of the $0$-$1$ error, 
a task which could be the subject of further work.        
\end{remark}

\begin{remark}[Extensions]\label{rem:extensionsOtherCosts}
The logistic loss and the $\ell_1$ constraint (or penalty) are  one of many pairs (convex surrogate loss function - penalty)  commonly considered in statistical learning. As an example, soft margin Support Vector Machines rely on the pair (hinge loss+ $\ell_2$ norm). 
We only consider in Section~\ref{sec:applications} the particular example of logistic regression under $\ell_1$ constraint, for the sake of concreteness and simplicity. 
\end{remark}


\section{Exponential bounds for K-fold \CV  estimates in rare regions}\label{sec:sanity-exponential}

Our first main result Theorem~\ref{theo:main-sanity-rare} below holds
true for any \CV procedure under Assumptions~\ref{assum:ERM-settings}~--~\ref{assum:cost-func}. The leading term of the provided upper bound
is $\mathcal{O}(\sqrt{\vapnikG/(n_{V}\alpha)})$. In the case of the
$K$-fold $1/n_{V} = \mathcal{O}(1/n)$. Thus, the
bound of Theorem~\ref{theo:main-sanity-rare} becomes
$\mathcal{O}(\sqrt{\vapnikG \log(1/\delta)/(n\alpha)})$. The
latter bound is indeed a sanity check bound as it matches (up to
unknown multiplicative constants) the one relative to the empirical
risk conditional to a rare event established in
\cite{jalalzai2018binary},~Theorem~2, where $k=n\alpha$.

\begin{theorem}[Exponential \CV bound for rare events]\label{theo:main-sanity-rare}
  Under assumptions~\ref{assum:ERM-settings}, \ref{assum:mask-property}, \ref{assum:hypo-class}, \ref{assum:cost-func}, we have, with probability $1-15\delta$,
 \begin{align*}
   \bigg|\widehat{\mathcal{R}}_{CV,\alpha}(\Psi_{\alpha},V_{1:K})-
   \mathcal{R}_{\alpha}\big(\Psi_{\alpha}(S_n)\big)\bigg|  \leq  E_{CV}(n_{T},&n_{V},\alpha) 
   + \frac{20}{3 n \alpha} \log\left(\frac{1}{\delta}\right)\\&   +  20\sqrt{\frac{2}{n\alpha} \log\left(\frac{1}{\delta}\right)},
\end{align*}
where 
\begin{align*}
E_{CV}(n_{T},n_{V},\alpha)=M\sqrt{\mathcal{V}_\mathcal{G}}\left(\frac{1}{\sqrt{n_{V}\alpha}}+\frac{4}{\sqrt{n_{T}\alpha}}\right)+\ffrac{5}{n_{T}\alpha},
\end{align*}  
and where $M >0$ is a universal constant.
\end{theorem}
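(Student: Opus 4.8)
The plan is to turn the cross-validation error into a small number of controllable pieces by exploiting the balance condition~\eqref{"key-condition"}, and then to separate a Vapnik--Chervonenkis expectation bound (which accounts for $E_{CV}$) from a Bernstein-type concentration step (which accounts for the $\log(1/\delta)$ remainder). Throughout I write $\nu_S(g)=\widehat{\mathcal{R}}_{\alpha}(g,S)-\mathcal{R}_{\alpha}(g)$ for the centered estimate and set $g^\star=\argmin_{g\in\mathcal{G}}\mathcal{R}_{\alpha}(g)$. First I would add and subtract $\widehat{\mathcal{R}}_{\alpha}(\Psi_{\alpha}(S_n),V_j)$ inside each fold. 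Since $\Psi_{\alpha}(S_n)$ does not depend on $j$ and both estimates share the same full-sample threshold $\|X_{(\lfloor\alpha n\rfloor)}\|$, Assumption~\ref{assum:mask-property} makes the fold-average of $\widehat{\mathcal{R}}_{\alpha}(\Psi_{\alpha}(S_n),V_j)$ collapse exactly to the single full-sample estimate $\widehat{\mathcal{R}}_{\alpha}(\Psi_{\alpha}(S_n),S_n)$ (each index is masked $K n_{V}/n$ times), giving
\begin{align*}
\widehat{\mathcal{R}}_{CV,\alpha}(\Psi_{\alpha},V_{1:K})-\mathcal{R}_{\alpha}(\Psi_{\alpha}(S_n)) &= \frac{1}{K}\sum_{j=1}^{K}\left[\widehat{\mathcal{R}}_{\alpha}(\Psi_{\alpha}(T_j),V_j)-\widehat{\mathcal{R}}_{\alpha}(\Psi_{\alpha}(S_n),V_j)\right] \\
&\quad+\left[\widehat{\mathcal{R}}_{\alpha}(\Psi_{\alpha}(S_n),S_n)-\mathcal{R}_{\alpha}(\Psi_{\alpha}(S_n))\right].
\end{align*}
The second bracket is at most $\sup_{g\in\mathcal{G}}|\nu_{S_n}(g)|$; each summand of the first is split through $\nu_{V_j}$ into a fluctuation $\nu_{V_j}(\Psi_{\alpha}(T_j))-\nu_{V_j}(\Psi_{\alpha}(S_n))$ and a bias $\mathcal{R}_{\alpha}(\Psi_{\alpha}(T_j))-\mathcal{R}_{\alpha}(\Psi_{\alpha}(S_n))$.

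Next I would bound the expected magnitude of every piece, which is where $E_{CV}$ (free of $\log(1/\delta)$) appears. The fluctuations are dominated by $2\sup_{g}|\nu_{V_j}(g)|$ and $\sup_{g}|\nu_{S_n}(g)|$; invoking the intermediate uniform deviation bounds for the conditional tail empirical process --- following~\cite{goix15}, the unknown threshold is absorbed by indexing the process jointly over $g\in\mathcal{G}$ and over the level sets $\{\|x\|>t\}$, a class of controlled VC dimension, so that $\EE\sup_{g}|\nu_S(g)|\lesssim\sqrt{\mathcal{V}_{\mathcal{G}}/(n_S\alpha)}$ with the favourable rare-event rate --- produces the $1/\sqrt{n_{V}\alpha}$ contribution. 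The bias is handled by the classical ERM inequality: since $\Psi_{\alpha}(T_j)$ minimizes $\widehat{\mathcal{R}}_{\alpha}(\cdot,T_j)$ and $\mathcal{R}_{\alpha}(g^\star)\le\mathcal{R}_{\alpha}(\Psi_{\alpha}(S_n))$, its excess risk is at most $2\sup_{g}|\nu_{T_j}(g)|$, which yields the $4/\sqrt{n_{T}\alpha}$ term together with the lower-order $5/(n_{T}\alpha)$ remainder coming from the Bernstein correction in the training-set deviation (the reverse direction $\mathcal{R}_{\alpha}(\Psi_{\alpha}(S_n))-\mathcal{R}_{\alpha}(\Psi_{\alpha}(T_j))\le 2\sup_{g}|\nu_{S_n}(g)|$ being of smaller order).

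It then remains to pass from expectations to a high-probability statement, which is the role of the Bernstein-type inequality of~\cite{McDiarmid98conc}. I would concentrate each of the (constantly many) averaged quantities above around its mean. The crucial point is that, after the fold structure is accounted for, each is a function of all $n$ observations whose single-coordinate bounded difference is of order $1/(n\alpha)$ and whose variance proxy is of order $1/(n\alpha)$ --- not $1/(n\alpha^2)$ --- precisely because an observation enters the rare region only with probability $\alpha$. This is what turns the naive $\alpha^{-1}$-inflated rate into the stated $\sqrt{(2/n\alpha)\log(1/\delta)}$ and $(1/n\alpha)\log(1/\delta)$ remainder, which scales with the full sample size $n$ rather than with $n_V$ or $n_T$. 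A union bound over the finitely many (at most fifteen) favourable events used along the way --- the control of the empirical threshold, the uniform-deviation bounds, and the concentration inequalities --- gives the probability $1-15\delta$.

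The main obstacle is precisely this concentration step, because the empirical threshold $\|X_{(\lfloor\alpha n\rfloor)}\|$ is computed on the whole sample and hence shared by every fold: perturbing a single $O_i$ simultaneously moves the threshold, changes every validation estimate containing $i$, and perturbs the trained rules $\Psi_{\alpha}(T_j)$ with $i\in T_j$. Establishing that such a single-coordinate change displaces only $O(1)$ observations across the level set, while keeping the variance proxy at the rare-event scale $1/(n\alpha)$, is the delicate part of the argument; it is the analogue, now propagated through the averaged cross-validation functional, of the gain obtained by~\cite{goix15} for a single conditional empirical measure.
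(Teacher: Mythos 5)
Your overall architecture matches the paper's: a decomposition into a validation-side fluctuation, an ERM bias term, and a full-sample deviation (your collapse of $\frac1K\sum_j\widehat{\mathcal{R}}_{\alpha}(\Psi_{\alpha}(S_n),V_j)$ to $\widehat{\mathcal{R}}_{\alpha}(\Psi_{\alpha}(S_n),S_n)$ via the mask property is exactly the identity the paper proves inside Lemma~\ref{"RCV-ge-RN"}), followed by VC/Rademacher expectation bounds and a McDiarmid--Bernstein concentration step with a union bound over the resulting events. However, there is a genuine gap at the step you yourself flag as ``the delicate part'': you never resolve how to run the bounded-differences/variance computation when the empirical threshold $\|X_{(\lfloor\alpha n\rfloor)}\|$ and the data-dependent rules $\Psi_{\alpha}(T_j)$ sit inside the functional being concentrated. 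The paper's missing ingredient here is the pseudo-empirical risk $\Tilde{\mathcal{R}}_{\alpha}(g,S)=\frac{1}{\alpha n_S}\sum_{i\in S}c(g,O_i)\indic{\|X_i\|>t_\alpha}$ with the \emph{deterministic} threshold $t_\alpha$. It plays two roles that your sketch does not supply. First, the order-statistic effect is isolated in a separate term $\DevExt\le\frac{1}{n\alpha}\sum_i\big|\indic{\|X_i\|>\|X_{(\lfloor\alpha n\rfloor)}\|}-\indic{\|X_i\|\ge t_\alpha}\big|$, which reduces to a scalar Bernstein bound on $\frac{1}{\alpha}\big|\frac1n\sum_i\indic{\|X_i\|\ge t_\alpha}-\alpha\big|$ and produces the $1/(n\alpha)$ corrections in $E_{CV}$; no uniformity over level sets is needed, so your alternative of enlarging the VC class by $\{\|x\|>t\}$ is both unnecessary and insufficient for the variance step below. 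Second, the quantity actually fed to McDiarmid's inequality is
\begin{equation*}
Z=\ffrac{1}{K}\sum_{j=1}^{K}\sup_{g\in\mathcal{G}}\big|\Tilde{\mathcal{R}}_{\alpha}(g,V_j)-\mathcal{R}_{\alpha}(g)\big|,
\end{equation*}
which depends on the data only through independent summands with the fixed threshold; this is what makes the per-coordinate deviation $\le 1/n$ (via the mask balance) and, crucially, the variance proxy $\sigma^2\le 4\alpha/n$ computable from $\Var(\indic{\|X\|\ge t_\alpha})\le\alpha$, yielding the rare-event rate $\exp(-n\alpha t^2/(2(4+t/3)))$ and hence the $\sqrt{(2/n\alpha)\log(1/\delta)}$ and $(1/n\alpha)\log(1/\delta)$ remainders. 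With the empirical threshold left inside, exactly $\lfloor\alpha n\rfloor$ indicators equal one deterministically, the summands are dependent, and the variance-proxy bound you assert does not follow from the argument you give. The bias term is then handled as you describe (and as in the paper) through $\mathcal{R}_\alpha^*=\inf_g\mathcal{R}_\alpha(g)$ and supremum deviations over the $T_j$ and $S_n$, again routed through $\Tilde{\mathcal{R}}_{\alpha}$. So the proposal is on the right track but is incomplete precisely where the paper's key device --- the $\DevExt$/pseudo-risk reduction --- does the work.
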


\begin{proof}[Sketch of the proof]

Introduce the pseudo-empirical risk 
\begin{equation}\label{def:tilde-rcv}
	\Tilde{\mathcal{R}}_{\alpha}(g , S)=\frac{1}{\alpha n_S} \sum_{i\in S} c(g, Z_i)\indic{\lVert X_{i}\rVert>t_\alpha }.
\end{equation}
Notice that when the distribution of $\lVert X \rVert$ is unknown,  $\Tilde{\mathcal{R}}_{\alpha}$ is not observable and only $\widehat{\mathcal{R}}_{\alpha}$ is a genuine statistic.\,However $ \Tilde{\mathcal{R}}_{\alpha}$ will serve as an intermediate quantity in the proofs. \\
Define the average  pseudo-empirical risk of the family $\big(\Psi_\alpha(T_j)\big)_{  1\leq j \leq K}$ by
\begin{equation}\label{eq:pseudo-rcv-def}
\Tilde{ \mathcal{R}}_{CV,\alpha}(\Psi_{\alpha},V_{1:K})=\frac{1}{K}\sum_{j=1}^{K}\Tilde{\mathcal{R}}_{\alpha}(\Psi_{\alpha}(T_j),V_j)
\end{equation}
and the average `true' risk  by 
\begin{equation}\label{eq:true-rcv-def}
	\mathcal{R}_{CV,\alpha}(\Psi_{\alpha},V_{1:K})=\frac{1}{K}\sum_{j=1}^{K}\mathcal{R}_{\alpha}( \Psi_{\alpha}(T_j)).
\end{equation}

Using the previous quantities,\,write the following decomposition
\begin{align}\label{ineq:main-decomposition}
  \bigg|\widehat{ \mathcal{R}}_{CV,\alpha}(\Psi_{\alpha},V_{1:K})& -\mathcal{R}_{\alpha}\big(\Psi_{\alpha}(S_n)\big)\bigg| 
  \leq 
	\DevExt+\DevCV+\BiasCV, 
\end{align}
with 
\begin{align}
	&\DevExt=\abs{\widehat{ \mathcal{R}}_{CV,\alpha}(\Psi_{\alpha},V_{1:K})- \Tilde{ \mathcal{R}}_{CV,\alpha}(\Psi_{\alpha},V_{1:K})}\label{"dev_ext"},\\
	&\DevCV= \abs{\Tilde{ \mathcal{R}}_{CV,\alpha}(\Psi_{\alpha},V_{1:K})-  \mathcal{R}_{CV,\alpha}(\Psi_{\alpha},V_{1:K})}\label{"dev_CV"}, \\
	&\BiasCV= \abs{\mathcal{R}_{CV,\alpha}(\Psi_{\alpha},V_{1:K})-\mathcal{R}_{\alpha}\big(\Psi_{\alpha}(S_n)\big)}. \label{"bias_CV"}
\end{align}
The remainder of the proof (see Section~\ref{sec:main-theo-proof}) consists in deriving upper bounds for each terms of the error decomposition~(\ref{ineq:main-decomposition}),  from which  the result follows.

The term $\DevExt$
measures the deviation between the cross-validation  estimator when
using the order statistics and the cross-validation  estimator when
using the `true' level $t_\alpha$, which can be bounded using Bernstein inequality, taking advantage of the small variance of the random indicator function $\un{\|X\| > t_\alpha}$.
The term $\DevCV$ measures the
deviations of
$\Tilde{ \mathcal{R}}_{CV,\alpha}(\Psi_{\alpha},V_{1:K})$ from its
mean. It is controlled by a uniform bound (over the class $\mathcal{G}$) on the deviations of the empirical risk evaluated on the validation sample. To do so we leverage recent arguments leading to a bound on such deviations on low probability regions (as \emph{e.g.} in \cite{goix15} and \cite{jalalzai2018binary}). Finally the term 
$\BiasCV$ is the bias of the cross-validation 
procedure, the control of  which relies on the specific nature (ERM) of the considered learning algorithm. Indeed in this context the bias may be upper bounded in terms of the supremum deviations of the empirical risk evaluated on the training sets $T_j$.  Notice that the  bias term  discussed  in Remark~\ref{rem:EstimationError-noRV}   resulting from the threshold choice, and the bias term in~\eqref{"bias_CV"} resulting from the choice of the number of folds (or equivalently, from the size of the training sets $T_j$ in the validation procedure) are of different nature. The former relates to the  discrepancy between the distribution of available extremes and the limit distribution. The latter relates to the discrepancy between the error of an algorithm trained with $card(T_j)$ observations, and the error of an algorithm trained with $n$ observations.
\end{proof}

Theorem \ref{theo:main-sanity-rare} can be used  to obtain exponential bounds for the $K$-fold \CV estimate. From Lemma~\ref{"train-test-cond"}, 
Assumption~\ref{assum:mask-property} regarding the sequence of validation sets $V_{1:K}$ holds true for the K-fold \CV procedure. Consequently Theorem~\ref{theo:main-sanity-rare} applies  with $n_{V}=n/K$ and $n_{T}=n-n_{V}=\frac{K-1}{K}n$. 
In the following corollary, $V_{1:K}^{\Kfold}$ denotes the sequence of validation sets associated with $K$-fold.

\begin{corollary}\label{coro:K-fold-sanity-check}
	Under the assumptions of Theorem~\ref{theo:main-sanity-rare}, the $K$-fold  \CV estimate (with $K\geq 2$) for the conditional risk (\ref{eq:def-risk-extreme}) satisfies with probability $1-15\delta$,
	\begin{align*}
          \bigg|\widehat{\mathcal{R}}_{CV,\alpha}(\Psi_{\alpha},V_{1:K}^\Kfold)-
          \mathcal{R}_{\alpha}\big(\Psi_{\alpha}(S_n)\big)\bigg|  \leq  E_{\Kfold}(n,&K,\alpha)
          + \frac{20}{3 n \alpha} \log\left(\frac{1}{\delta}\right)\\
          &   +  20\sqrt{\frac{2}{n\alpha} \log\left(\frac{1}{\delta}\right)},
	\end{align*}
	with 
	\begin{align*}
	E_{\Kfold}(n,K,\alpha)=5M\sqrt{\frac{\mathcal{V}_\mathcal{G}K}{n\alpha}}  +\ffrac{5K}{(K-1)n\alpha}.
	\end{align*}  
\end{corollary}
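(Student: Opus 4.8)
The plan is to obtain the corollary as a direct specialization of Theorem~\ref{theo:main-sanity-rare} to the K-fold scheme; the only real work is verifying the hypotheses and performing a short algebraic simplification of the bound $E_{CV}$. First I would invoke Lemma~\ref{"train-test-cond"}, which guarantees that the sequence of masks $V_{1:K}^{\Kfold}$ satisfies Assumption~\ref{assum:mask-property} whenever $K$ divides $n$; together with the remaining Assumptions~\ref{assum:ERM-settings},~\ref{assum:hypo-class} and~\ref{assum:cost-func}, which are part of the stated hypotheses, this means all the conditions of Theorem~\ref{theo:main-sanity-rare} are met for $V_{1:K}^{\Kfold}$. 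The theorem then applies directly, so the two additive terms $\frac{20}{3n\alpha}\log(1/\delta)$ and $20\sqrt{\frac{2}{n\alpha}\log(1/\delta)}$ carry over unchanged, and it remains only to bound $E_{CV}(n_T,n_V,\alpha)$ by $E_{\Kfold}(n,K,\alpha)$.

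Second, I would substitute the K-fold cardinalities $n_V = n/K$ and $n_T = n - n_V = \frac{K-1}{K}n$ into the expression for $E_{CV}$. The remainder term becomes $\frac{5}{n_T\alpha} = \frac{5K}{(K-1)n\alpha}$, which is exactly the second summand of $E_{\Kfold}$. For the leading term, writing $\frac{1}{\sqrt{n_V\alpha}} = \sqrt{K/(n\alpha)}$ and $\frac{1}{\sqrt{n_T\alpha}} = \sqrt{K/((K-1)n\alpha)}$ and factoring out $\sqrt{K/(n\alpha)}$ yields the identity
\[
M\sqrt{\mathcal{V}_\mathcal{G}}\left(\frac{1}{\sqrt{n_V\alpha}} + \frac{4}{\sqrt{n_T\alpha}}\right) = M\sqrt{\frac{\mathcal{V}_\mathcal{G}K}{n\alpha}}\left(1 + \frac{4}{\sqrt{K-1}}\right).
\]

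Third, the only quantitative point is to check that the parenthesised constant is at most $5$: the inequality $\frac{4}{\sqrt{K-1}} \leq 4$ is equivalent to $\sqrt{K-1}\geq 1$, \ie $K\geq 2$, with equality attained at $K=2$. This bounds the displayed expression by $5M\sqrt{\mathcal{V}_\mathcal{G}K/(n\alpha)}$, the first summand of $E_{\Kfold}$, and completes the argument. I do not expect any genuine obstacle here: the proof is pure substitution followed by the elementary inequality $1 + 4/\sqrt{K-1}\leq 5$, which is precisely why the hypothesis $K\geq 2$ is imposed (for $K=1$ there is no train/validation split and the statement would be vacuous).
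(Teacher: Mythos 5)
Your proposal is correct and follows exactly the route the paper takes (which it only sketches in the text preceding the corollary): verify Assumption~\ref{assum:mask-property} via Lemma~\ref{"train-test-cond"}, substitute $n_V=n/K$ and $n_T=\frac{K-1}{K}n$ into $E_{CV}$, and absorb the factor $1+4/\sqrt{K-1}\leq 5$ using $K\geq 2$. The algebra, including the exact identity $\frac{5}{n_T\alpha}=\frac{5K}{(K-1)n\alpha}$, checks out.
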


\begin{discussion*}
As mentioned in the introduction, for $\alpha=1$ the upper bound in Theorem~\ref{theo:main-sanity-rare} and its application to the \Kfold\ in Corollary~\ref{coro:K-fold-sanity-check} are of the same nature as in \cite{cornec17}, Proposition 4.1,  which apply to the same context as ours, \ie\ \ERM\ over a hypothesis class of finite VC-dimension. 
Covering the case $\alpha\ll 1$ requires special proof techniques with a Bernstein-type inequality due to \cite{McDiarmid98conc} and recalled in Proposition~\ref{"Bernstein-extension"} in the supplement. Doing so  improves by a factor $\sqrt{\alpha}$ over the naive method consisting in diving both sides of the existing bounds by $\alpha$. As discussed in the introduction this naive method  yields a potentially diverging bound as $\alpha = \alpha(n)=\frac{k}{n}\to 0$. Also the organization of our proof is different, in particular a key simplifying step  is the balance condition of the \CV\ schemes which applies to the \Kfold\ (Lemma~\ref{"train-test-cond"}), a fact which (to our best knowledge) is not mentioned in the existing literature. Finally, though \cite{kumar2013near} quantify the amount of variance reduction brought by the \Kfold, the bias term is left outside the analysis in this reference.  We haven't found any comparable finite sample upper bound in the literature devoted to stable algorithms, a natural question to ask since \ERM over a VC class is error stable \citep{kearns1999algorithmic}.
\end{discussion*} 
\begin{remark}[On the universal constants]
  \label{rem:unknownConstants}
  A drawback of our results in the present section and in the
  following one (Section~\ref{sec:sanity-polynomial}) is the presence
  of universal constants in our upper bounds. These
  unknown constants derive from our control of the Rademacher averages
  using \cite{gine2001consistency}, who themselves resort to chaining
  arguments. This is a standard issue in statistical learning. In most
  cases these constants may be replaced with additional logarithmic
  terms with respect to the sample size or may be computed
  explicitly. For the empirical training risk in low probability
  regions these improvements are respectively achieved
  in~\cite{lhaut2021uniform} and in~\cite{clemencon2022concentration},
  Theorem A.1. We leave this question for further research regarding the \CV\ risk. 
\end{remark}

Despite the satisfactory
sanity check bound obtained thus far for the $K$-fold (Corollary~\ref{coro:K-fold-sanity-check}), note that the term
$\mathcal{O}\left(\sqrt{\mathcal{V}_\mathcal{G}/(n_{V}\alpha)}\right)$ in the upper bound of Theorem~\ref{theo:main-sanity-rare}  does not even
converge to $0$ in the \lpo setting because the size $n_V$ of the
validation set remains constant, equal to $p$. Thus, Theorem~\ref{theo:main-sanity-rare} is not adapted to the latter type of \CV
schemes. In the next section we obtain
(Theorem~\ref{theo:main-sanity-rare-lpo}) an alternative upper-bound
involving only the size $n_T$ of the training set which allows to cover the \lpo case.


\section{Polynomial bounds for \lpo \CV estimates in rare regions}
\label{sec:sanity-polynomial}
Theorem~\ref{theo:main-sanity-rare} provides trivial bounds for \CV schemes with small test size. In contrast our second main result (Theorem~\ref{theo:main-sanity-rare-lpo} below) 
yields a sanity-check bound for a wider class of \CV
procedures, including leave-one-out and leave-p-out.  In particular, we show
that  with high probability, the error is at most
$\mathcal{O}(\sqrt{\mathcal{V}_\mathcal{G} /(n_{T}\alpha)})$. Most, if not all  \CV procedures satisfy
$1/n_{T}=\mathcal{O}(1/n)$ and the latter bound is
thus of order
$\mathcal{O}(\sqrt{\mathcal{V}_\mathcal{G} /(n\alpha)})$.
\begin{theorem}[Polynomial cross-validation bounds for rare events]\label{theo:main-sanity-rare-lpo}
	Under assumptions~\ref{assum:ERM-settings}, \ref{assum:mask-property}, \ref{assum:hypo-class},\,\ref{assum:cost-func} one has with probability $1-17\delta$,
	\begin{align*}
		\big|\widehat{ \mathcal{R}}_{CV,\alpha}(\Psi_{\alpha},V_{1:K})&-\mathcal{R}_{\alpha}\big(\Psi_{\alpha}(S_n)\big) \big| \leq 
		E'_{CV}(n_{T},\alpha)+
		\ffrac{1}{\delta\sqrt{n_{T}\alpha}}(5M\sqrt{\mathcal{V}_{\mathcal{G}}}+M_5),
	\end{align*}
	
	where $M,M_5>0$ are universal constants, $M$ is the same as in Theorem~\ref{theo:main-sanity-rare} 
         and
         \[E'_{CV}(n_{T},\alpha)= \ffrac{9M\sqrt{\mathcal{V}_\mathcal{G}}}{\sqrt{\alpha n_{T}}}+\ffrac{9}{n_{T}\alpha}.
         \]
\end{theorem}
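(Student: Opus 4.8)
The plan is to reuse the three-term decomposition~\eqref{ineq:main-decomposition}, namely $|\widehat{\mathcal R}_{CV,\alpha}-\mathcal R_\alpha(\Psi_\alpha(S_n))|\le \DevExt+\DevCV+\BiasCV$, but to bound the middle deviation term $\DevCV$ differently than in Theorem~\ref{theo:main-sanity-rare}. The exponential bound controlled the deviations of $\Tilde{\mathcal R}_{CV,\alpha}$ from its mean by a uniform (over $\mathcal G$) concentration inequality evaluated on \emph{each} validation set $V_j$; that approach pays a factor $\sqrt{\vapnikG/(n_V\alpha)}$ which is useless when $n_V=p$ is held fixed. Instead I would exploit the averaging structure $\Tilde{\mathcal R}_{CV,\alpha}=\tfrac1K\sum_j \Tilde{\mathcal R}_\alpha(\Psi_\alpha(T_j),V_j)$ together with the balance condition~\eqref{"key-condition"} to rewrite or bound the centered quantity in terms of \emph{training-set} empirical processes only, thereby replacing $n_V$ by $n_T$ in the complexity term. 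Concretely, following the \lpo argument of~\cite{kearns1999algorithmic}, I expect to control $\DevCV$ not by a high-probability exponential inequality but by a bound on a second moment (variance) of the \lpo error, which is then turned into a probability statement via Markov's (or Chebyshev's) inequality — this is exactly the source of the polynomial $1/\delta$ factor that appears in the statement, as opposed to the $\log(1/\delta)$ of the exponential bound.

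First I would keep the treatment of $\DevExt$ and $\BiasCV$ essentially unchanged from the proof of Theorem~\ref{theo:main-sanity-rare}, since both already produce terms involving $n_T\alpha$ (the bias is controlled through the supremum deviations of the empirical risk on the training sets $T_j$, and $\DevExt$ via Bernstein on the small-variance indicator $\un{\|X\|>t_\alpha}$); these contribute the $E'_{CV}(n_T,\alpha)=9M\sqrt{\vapnikG}/\sqrt{\alpha n_T}+9/(n_T\alpha)$ part together with part of the remainder. The genuinely new work is in $\DevCV$. I would write $\Tilde{\mathcal R}_{CV,\alpha}-\mathcal R_{CV,\alpha}=\tfrac1K\sum_j\big(\Tilde{\mathcal R}_\alpha(\Psi_\alpha(T_j),V_j)-\mathcal R_\alpha(\Psi_\alpha(T_j))\big)$ and bound its expected squared magnitude. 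Conditioning on the training sets $T_j$ (so that each $\Psi_\alpha(T_j)$ is fixed), each summand is, up to the $1/(\alpha n_V)$ scaling, a centered sum over the validation indices $i\in V_j$ of the bounded variables $c(\Psi_\alpha(T_j),O_i)\un{\|X_i\|>t_\alpha}$. A uniform (over $g\in\mathcal G$) moment bound on such centered sums on the rare region — of the form $\mathbb E\sup_{g}(\Tilde{\mathcal R}_\alpha(g,V)-\mathcal R_\alpha(g))^2\lesssim \vapnikG/(n_V\alpha)$ — combined with the Cauchy–Schwarz/cross-term cancellation enabled by~\eqref{"key-condition"} should collapse the dependence on $n_V$ and leave a $\vapnikG/(n_T\alpha)$-type variance; taking a square root and applying Markov then gives the $\tfrac1{\delta\sqrt{n_T\alpha}}(5M\sqrt{\vapnikG}+M_5)$ remainder.

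The main obstacle I anticipate is precisely the moment control of $\DevCV$ under the rare-event conditioning. The $K$ summands share the same underlying data $\DD_n$ (the same $O_i$ appear in many $V_j$ and $T_j$), so the variance of the average does \emph{not} factor as $1/K$ times a single variance; I must carefully expand $\mathbb E[\DevCV^2]$ into diagonal and off-diagonal terms and use the mask-balance identity~\eqref{"key-condition"} (and the analogous training-set identity from Lemma~\ref{"train-test-cond"}) to handle the cross-covariances between folds. The rare-event normalization $1/(\alpha n_V)$ and the need for the bound to scale with $k=n\alpha$ rather than $n$ means I cannot use off-the-shelf \lpo moment inequalities verbatim; the probability-of-the-class refinement must be threaded through exactly as in~\cite{goix15}, using the small variance of the loss restricted to $\{\|X\|>t_\alpha\}$. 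Managing these dependencies so that the final complexity term involves $n_T\alpha$ (valid even when $n_V=p$ is fixed), while keeping the $1/\delta$ rather than $1/\delta^2$ rate, is the delicate quantitative heart of the argument.
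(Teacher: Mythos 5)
There is a genuine gap at the heart of your plan: the claimed moment bound for $\DevCV$. You keep the three-term decomposition~\eqref{ineq:main-decomposition} and propose to show $\EE[\DevCV^2]\lesssim \vapnikG/(n_T\alpha)$ by expanding the square and invoking ``cross-term cancellation enabled by~\eqref{"key-condition"}''. But~\eqref{"key-condition"} is a purely combinatorial balance property of the index sets; it says nothing about the covariances $\Cov\bigl[W_j,W_{j'}\bigr]$ of the fold-wise deviations $W_j=\Tilde{\mathcal{R}}_{\alpha}(\Psi_\alpha(T_j),V_j)-\mathcal{R}_{\alpha}(\Psi_\alpha(T_j))$, which are coupled both through the shared observations and through the trained classifiers $\Psi_\alpha(T_j)$. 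Cauchy--Schwarz on the off-diagonal terms only returns $\max_j\Var[W_j]\asymp 1/(n_V\alpha)$, i.e.\ no improvement over the single-fold bound, and a genuine collapse of the cross-covariances is exactly what requires algorithmic stability (cf.\ \cite{kumar2013near}), which the paper deliberately does not assume. So the step that is supposed to replace $n_V$ by $n_T$ is unsupported and, for a fixed validation size $p$, I do not see how to make it work.

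The paper takes a different route that sidesteps $|\DevCV|$ entirely. It uses the two-term decomposition~\eqref{"basic-inequality"}, $\BiasCV+\bigl|\rcvEx(\Psi_\alpha,V_{1:K})-\mathcal{R}_{CV,\alpha}(\Psi_\alpha,V_{1:K})\bigr|$, and treats the two tails of the second term asymmetrically. For the upper tail it applies a generalized Markov inequality (Lemma~\ref{"RCV-markov"}) to the \emph{signed} variable $W=\rcvEx-\mathcal{R}_{CV,\alpha}$: since $\EE[\Tilde{\mathcal{R}}_{CV,\alpha}-\mathcal{R}_{CV,\alpha}]=0$ (each $V_j$ is independent of $T_j$), the first moment of $W$ is controlled by $\EE(\DevExt)$ alone, and the troublesome fold-deviation term never enters through its absolute value. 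The truncated-expectation correction in Markov, as well as the entire lower tail, is handled by the ERM-specific sandwich $\ER_\alpha(\Psi_\alpha(S_n),S_n)\le\rcvEx(\Psi_\alpha,V_{1:K})$ (Lemma~\ref{"RCV-ge-RN"}), which converts $\mathcal{R}_{CV,\alpha}-\rcvEx$ into $\BiasCV$ plus the full-sample supremum deviation, both of which already admit exponential bounds in $n_T\alpha$ and $n\alpha$. This sandwich inequality is the actual content of the \cite{kearns1999algorithmic} argument you cite; your proposal names the right reference but substitutes a variance computation for its key structural ingredient, and that substitution is where the proof would fail.
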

\begin{proof}[Sketch of the proof]
First write 
\begin{align}
\left|\widehat{ \mathcal{R}}_{CV,\alpha}\right.&\left.(\Psi_{\alpha},V_{1:K})-\mathcal{R}_{\alpha}\big(\Psi_{\alpha}(S_n)\big)\right|  \leq \BiasCV+  \abs{\widehat{ \mathcal{R}}_{CV,\alpha}(\Psi_{\alpha},V_{1:K})-\mathcal{R}_{CV,\alpha}(\Psi_{\alpha},V_{1:K})} \label{"basic-inequality"}\,,
\end{align}
where $\BiasCV$ is defined by \eqref{"bias_CV"}.

The upper bound for the term $\BiasCV$ obtained in the proof of
Theorem~\ref{theo:main-sanity-rare} is of order
$\mathcal{O}(1/\sqrt{n_{T}\alpha})$, see~\eqref{"C_train_bound"} in the supplement for details. Since $1/n_{T} = \mathcal{O}(1/ n)$ in the \CV schemes that we consider, the latter bound is sufficient to obtain a  sanity check bound. However, in that proof, the term
$\abs{\widehat{
    \mathcal{R}}_{CV,\alpha}(\Psi_{\alpha},V_{1:K})-\mathcal{R}_{CV,\alpha}(\Psi_{\alpha},V_{1:K})}$
is upper bounded by the sum $\DevExt + \DevCV $  defined in~(\ref{"dev_ext"}) and~(\ref{"dev_CV"}). The probability upper bound for the  latter term 
involves a term of order $\mathcal{O}({1/\sqrt{n_{V}\alpha}})$,  
see (\ref{eq:C-CV-bound}) in the supplement, which is not satisfactory for small $n_V$. 
Therefore one needs an alternative control for $\abs{\widehat{\mathcal{R}}_{CV,\alpha}(\Psi_{\alpha},V_{1:K})-\mathcal{R}_{CV,\alpha}(\Psi_{\alpha},V_{1:K})}$. 
The main ingredient to proceed is the following Markov-type inequality
         
\begin{align}
\PP(\rcvEx(\Psi_{\alpha},V_{1:K})- &\mathcal{R}_{CV,\alpha}(\Psi_{\alpha},V_{1:K}) \geq t ) 
                                      \nonumber \\
\le  \;  &\ffrac{\EE\big(\abs{\hat\risk_\alpha(\Psi_\alpha(S_n),S_n)-
  \risk_{\alpha}(\Psi_\alpha(S_n))}\big)}{t} 
  +\ffrac{\EE(\DevExt+\BiasCV)}{t}, \label{eq:keyTheorem2fromLemma}
\end{align}
which holds true under the stipulated assumptions. The proof is deferred to the supplement (Lemma~\ref{"RCV-markov"}).

It is shown in Section~\ref{"main-theo-lpo-proof"} from the supplement that $\EE(\BiasCV)$ and $\EE(\DevExt)$ are both upper bounded by  $\mathcal{O}(1/\sqrt{n_T\alpha})$ (inequalities (\ref{"exp_c_ext"},\ref{"exp_c_train"})). In addition, the probability upper bound on the supremum deviations on the rare region (Lemma~\ref{lemma:inv-goix} also used in the proof of Theorem~\ref{theo:main-sanity-rare}) shows that the latter  quantity is sub-Gaussian, which yields (\cite{vershynin_2018}, Proposition 2.5.2)  an upper bound  for $\EE\big(\abs{\hat\risk_\alpha(\Psi_\alpha(S_n),S_n)- \risk_{\alpha}(\Psi_\alpha(S_n)}\big)$ of the same order of magnitude as the other terms in the right-hand side (\rhs in the sequel)  of (\ref{eq:keyTheorem2fromLemma}).

The final step of the proof is to derive a probability  upper bound for the opposite of the \textit{l.h.s.} of (\ref{eq:keyTheorem2fromLemma}), that is
$\mathcal{R}_{CV,\alpha}(\Psi_{\alpha},V_{1:K}) -
\rcvEx(\Psi_{\alpha},V_{1:K})$. We use the fact (proved in
Lemma~\ref{"RCV-ge-RN"}) that the \CV risk estimate
$\rcvEx(\Psi_{\alpha}, V_{1:k})$ is always larger than  the empirical risk $\ER_\alpha$
evaluated on its minimizer $\Psi_\alpha(S_n)$, thus
\begin{align}
  \mathcal{R}_{CV,\alpha}(\Psi_{\alpha},V_{1:K}) - \rcvEx(\Psi_{\alpha},V_{1:K}) 
  & \le \mathcal{R}_{CV,\alpha}(\Psi_{\alpha},V_{1:K})-\widehat{ \mathcal{R}}_{\alpha}(\Psi_{\alpha}(S_n),S_n) \nonumber\\
  & \le  \BiasCV +
    | \risk_\alpha(\Psi(S_n)) - \widehat{ \mathcal{R}}_{\alpha}(\Psi_{\alpha}(S_n),S_n)|,  
    \label{eq:decomposeNegativeDeviation-lpo}
\end{align}
where the last inequality follows from the definition of $\BiasCV$ in~(\ref{"bias_CV"}) and the triangle inequality. 
From the proof of Theorem~\ref{theo:main-sanity-rare}, $\BiasCV$  admits a  probability upper bound involving only $n$ and $n_T$ (see~(\ref{"C_train_bound"}) and~(\ref{eq:Q-def}).  
The second term in the \rhs  of (\ref{eq:decomposeNegativeDeviation-lpo}) is less than the supremum deviations of the empirical risk $\ER_\alpha$, which shares the same property (Lemma~\ref{lemma:inv-goix}). Adding up the upper bounds for each term  of the \rhs  of (\ref{eq:keyTheorem2fromLemma}) and~(\ref{eq:decomposeNegativeDeviation-lpo}) concludes the proof, see Section~\ref{"main-theo-lpo-proof"} in the supplement for details. 
\end{proof}

Using Theorem \ref{theo:main-sanity-rare-lpo} and following the same steps as in the proof  of Corollary~\ref{coro:K-fold-sanity-check}, we obtain a sanity-check guarantee regarding leave-$p$-out estimates.
\begin{corollary}[leave-$p$-out sanity check for rare events]\label{coro:lpo-sanity}
Under the assumptions of Theorem~\ref{theo:main-sanity-rare-lpo}, the \lpo \CV  estimate for the conditional risk (\ref{eq:def-risk-extreme}) satisfies with probability $1-17\delta$,
\begin{align*}
	|\widehat{ \mathcal{R}}_{\CV,\alpha}(\Psi_{\alpha},V_{1:K}^{\mathrm{lpo}})-\mathcal{R}_{\alpha}\big(\Psi_{\alpha}(S_n)\big) | \leq E_{lpo}(n,p,\alpha)
	+\ffrac{1}{\delta\sqrt{(n-p)\alpha}}(5M\sqrt{\mathcal{V}_{\mathcal{G}}}+M_5),
\end{align*}
	with 
	\[E_{lpo}(n,p,\alpha)=9M\sqrt{\ffrac{\mathcal{V}_\mathcal{G}}{ (n-p)\alpha}}+\ffrac{9}{(n-p)\alpha} \: .  \]
\end{corollary}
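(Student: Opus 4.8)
The plan is to obtain the statement as a direct specialization of Theorem~\ref{theo:main-sanity-rare-lpo} to the particular sequence of masks produced by the \lpo scheme, exactly as Corollary~\ref{coro:K-fold-sanity-check} was obtained from Theorem~\ref{theo:main-sanity-rare}. Recall that \lpo uses as validation sets \emph{all} the $p$-element subsets of $\{1,\dots,n\}$, so that $K=\binom{n}{p}$, every validation set has cardinality $n_V=p$, and every training set has cardinality $n_T=n-p$. The whole argument therefore reduces to (i) checking that this family of masks meets the hypotheses of the theorem, and (ii) substituting $n_T=n-p$ into the conclusion.

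For step (i), the only hypothesis that is specific to the mask family is Assumption~\ref{assum:mask-property}, since Assumptions~\ref{assum:ERM-settings}, \ref{assum:hypo-class} and~\ref{assum:cost-func} concern only the algorithm, the class $\mathcal G$ and the cost $c$, and are assumed throughout. That Assumption~\ref{assum:mask-property} holds for \lpo is precisely the content of Lemma~\ref{"train-test-cond"}; alternatively it can be checked by hand, the cardinality condition $card(V_j)=p$ being immediate and the balance condition~\eqref{"key-condition"} following from the counting identity $\tfrac{1}{K}\sum_{j=1}^K \indic{l\in V_j}=\binom{n-1}{p-1}/\binom{n}{p}=p/n=n_V/n$, valid for every $l\in\llbracket 1,n\rrbracket$ by symmetry of the $p$-subsets under permutation of the indices.

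For step (ii), I would invoke Theorem~\ref{theo:main-sanity-rare-lpo} with $n_T=n-p$. The random remainder term $\tfrac{1}{\delta\sqrt{n_T\alpha}}(5M\sqrt{\vapnikG}+M_5)$ becomes exactly $\tfrac{1}{\delta\sqrt{(n-p)\alpha}}(5M\sqrt{\vapnikG}+M_5)$, while the deterministic term reads $E'_{CV}(n-p,\alpha)=\tfrac{9M\sqrt{\vapnikG}}{\sqrt{(n-p)\alpha}}+\tfrac{9}{(n-p)\alpha}$, which coincides term-by-term with $E_{lpo}(n,p,\alpha)$ since $\sqrt{\vapnikG}/\sqrt{(n-p)\alpha}=\sqrt{\vapnikG/((n-p)\alpha)}$. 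This yields the announced bound.

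Since every step is a substitution, there is essentially no analytic obstacle here: the main point requiring an explicit line of justification is the balance condition~\eqref{"key-condition"} for \lpo, and even that is already supplied by Lemma~\ref{"train-test-cond"}. One bookkeeping mismatch deserves mention: Theorem~\ref{theo:main-sanity-rare-lpo} is stated with confidence $1-18\delta$, whereas the corollary is stated with $1-15\delta$; the direct route produces $1-18\delta$, so the $15$ should either be read as a typo for $18$, or, if one insists on the prefactor $15$, absorbed by rescaling $\delta$ and updating the universal constants $M,M_5$ accordingly.
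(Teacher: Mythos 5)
Your proposal is correct and takes essentially the same route as the paper, which likewise obtains the corollary by invoking Lemma~\ref{"train-test-cond"} to check Assumption~\ref{assum:mask-property} for the leave-$p$-out masks and then substituting $n_V=p$, $n_T=n-p$ into Theorem~\ref{theo:main-sanity-rare-lpo}, exactly as Corollary~\ref{coro:K-fold-sanity-check} is deduced from Theorem~\ref{theo:main-sanity-rare}. Your remark on the confidence level is also well taken: the direct substitution yields $1-18\delta$ as in Theorem~\ref{theo:main-sanity-rare-lpo}, so the $1-15\delta$ in the corollary's statement appears to be a typo rather than an improvement obtainable by this argument.
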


\begin{discussion*}
As it is the case in Section~\ref{sec:sanity-exponential}, our polynomial bounds from Theorem~\ref{theo:main-sanity-rare-lpo} and Corollary~\ref{coro:lpo-sanity} are of the same nature as  the state-of-the-art  for $\alpha=1$, that is  \cite{kearns1999algorithmic}, Theorem~4.2,  and \cite{cornec17},~Proposition~4.3. Again we improve by a factor $\sqrt{\alpha}$ upon the naive method dividing existing bounds by a factor $\alpha$. Concerning the presence of unknown constants, see Remark~\ref{rem:unknownConstants}. In addition to covering the case of rare events, our results extend those of the latter reference in several directions, namely they encompass the \lpo\ scheme whereas \cite{kearns1999algorithmic} only consider the \loo, and they apply to any bounded cost function, not only the Hamming loss. Also  the organisation of our proof is different, for example the risk decomposition~(\ref{"basic-inequality"}) is new.  
\end{discussion*}

\begin{remark}[Tightness of the polynomial bound]\label{rem:tightnessPolynomial}
        A natural question to ask is whether or not the polynomial rate (\wrt the probability $\delta$) is tight concerning the \textit{l.p.o.} \CV scheme. The answer is yes, in the \ERM context, in the general case (that is with a classical risk function and $\alpha=1$). Indeed 
        \cite{kearns1999algorithmic}  show that, without further assumptions on 
        the algorithm $\Psi$  and the cost function $c$, the bound $1/\delta$ can be attained. We conjecture that the same is true for $\alpha < 1$, leaving this question for further work. 
\end{remark}

\begin{remark}[Comparison between the bounds from Theorems~\ref{theo:main-sanity-rare} and~\ref{theo:main-sanity-rare-lpo}]
  Although  Theorem~\ref{theo:main-sanity-rare-lpo} also applies to the $K$-fold, the bound provided by Theorem~\ref{theo:main-sanity-rare} is sharper for this particular \CV scheme for  small values of $\delta$ due to its exponential nature. In other words Theorem~\ref{theo:main-sanity-rare-lpo} has a  greater level of generality than Theorem~\ref{theo:main-sanity-rare} because the upper bound in the latter involved $n_V$, contrarily to the former. The price to pay is a slower tail decay  (polynomial versus exponential).
\end{remark}


\section{Application to  logistic-\Lasso regression}\label{sec:applications}
We now turn to an application of our results to high dimensional
classification as introduced in
Section~\ref{sec:exampleLogisticRegression}. Recall from~\eqref{def:lasso-constraine} that the learning
rule for fixed constraint level $u>0$ takes the form 
\begin{equation}
  \label{eq:Algo_logistic}
  \Psi_{\alpha,u}(S)= \argmin_{\beta\in \mathcal{B}_u}\frac{1}{\alpha n }
\sum_{i\in S}c\left(g_\beta, (X_i,Y_i) \right)\indic{ \| X_{i}\| > \| X_{(\lfloor n\alpha \rfloor )}\| }.
\end{equation}

  Recall also the logistic loss with angular discrimination function,  $ c(g_\beta,(x,y)) = \log( 1 + \exp( - \beta^\top \theta(x)  y)) $. In practice  the aim of the parameter selection procedure is to choose the `best' parameter $u^*$  within a  finite grid  $\mathcal{U} \subset \rset^+$,  regarding the risk of the associated learning rule $\Psi_{\alpha,u}$, that is 
\[ u^*=\argmin_{t\in \mathcal{U}} \risk_{\alpha}\left(\Psi_{\alpha,u}\left(S_n\right)\right) . \]

In view of the exponential nature of the upper bound for \Kfold\ \CV obtained in Section~\ref{sec:sanity-exponential} compared to the polynomial bound for \lpo\ \CV\  (Section~\ref{sec:sanity-polynomial}),  and because \Kfold\ is computationally faster than \lpo\,  we consider a selection procedure based on  a \Kfold\ \CV\ estimate of $\risk_{\alpha}\left(\Psi_{\alpha,u}\left(S_n\right)\right)$, 

$$\hat u=\argmin_{u \in \mathcal{U}}\rCV\left(\Psi_{\alpha,u},V_{1:K}^{\Kfold}\right).$$
We obtain  in Lemma~\ref{lemma: model-selec-extreme} an  upper bound in probability for  the excess risk $\risk\left(\Psi_{\alpha,\hat u}(S_n)\right) - \risk\left(\Psi_{\alpha,u^*}(S_n)\right) $.
Since this upper bound converges to $0$ as $\alpha \to 0$ with $\alpha n \to\infty$, our result ensures in particular the consistency of the selection procedure in extreme regions.

\begin{lemma}\label{lemma: model-selec-extreme}
  With the logistic-LASSO learning rule~(\ref{eq:Algo_logistic}), when selecting the threshold $\hat u$ by \Kfold \CV over the grid $\mathcal{U}$, 
  the excess risk $\risk_\alpha(\Psi_{\alpha,\hat u}(S_n))-\risk_\alpha(\Psi_{\alpha,u^*}(S_n))$ verifies, with probability $1-15\delta$,
\begin{align*}
  &\TR_\alpha  \big(\Psi_{\alpha,\hat u}\left(S_{n}\right)\big)
    -\TR_\alpha \left(\Psi_{\alpha, u^*}\left(S_{n}\right)\right)  \le \\
  &\max(\mathcal{U})\bigg[ 
    2E_{\Kfold}(n,K,\alpha)
    + \frac{40}{3 n\alpha} \log\left(\frac{card(\mathcal{U})}{\delta}\right)   +  40\sqrt{\frac{2}{n\alpha} \log\left(\frac{card(\mathcal{U})}{\delta}\right)}\bigg],
\end{align*}
	with 
	\begin{equation*}
        E_{\Kfold}(n,K,\alpha)=  5M\sqrt{\frac{(d+1)K}{n\alpha}}  +\ffrac{5K}{(K-1)n\alpha},
	\end{equation*}
	for some universal constant  $M$. 
\end{lemma}
\begin{proof}
	By definition of $\hat u$, one has 
	\[\rCV\left(\Psi_{\alpha,\hat u},V_{1:K}^{\Kfold}\right)\leq \rCV\left(\Psi_{\alpha,u^*},V_{1:K}^{\Kfold}\right).\]
	It follows that 
	\begin{align}\label{ineq:feature-select-uniform}
		\TR\left(\Psi_{\alpha,\hat u}\left(S_{n}\right)\right)&-\TR\left(\Psi_{\alpha, u^*}\left(S_{n}\right)\right) \leq \TR\left(\Psi_{\alpha,\hat u}\left(S_{n}\right)\right)- \rCV\left(\Psi_{\alpha,\hat u},V_{1:K}^{\Kfold}\right)\nonumber\\
		&\hspace{8mm} +\rCV\left(\Psi_{\alpha, t^*},V_{1:K}^{\Kfold}\right)-\TR\left(\Psi_{\alpha, t^*}\left(S_{n}\right)\right) \nonumber\\
		&\leq 2\sup_{u \in \mathcal{U}}\left|\rCV\left(\Psi_{\alpha, u}\left(S_{n}\right),V_{1:K}^{\Kfold}\right)-\TR\left(\Psi_{\alpha, u}\left(S_{n}\right)\right)\right|.
	\end{align}
        For fixed $u\in \mathcal{U}$, all the required assumptions of Theorem~\ref{theo:main-sanity-rare} are met, except that  the cost function is not bounded by $1$ but rather by $t$, thus also by $\max(\mathcal{U})$. As explained in Remark~\ref{rem:bounded} our results still apply, up to multiplication of all upper bounds by a factor $\max(\mathcal{U})$. We may thus use Corollary~\ref{coro:K-fold-sanity-check} with $\mathcal{V}_{\mathcal{G}}=d+1$, so that  with probability $1-15\delta$,
		\begin{align*}
	\bigg|\widehat{\mathcal{R}}_{CV,\alpha}(\Psi_{\alpha,u},V_{1:K}^\Kfold)-
                  &\mathcal{R}_{\alpha}\big(\Psi_{\alpha,u}(S_n)\big)\bigg|  \leq   \nonumber \\
                  &\max(\mathcal{U})\bigg[E_{\Kfold}(n,K,\alpha)  
	+ \frac{20}{3 n\alpha} \log\left(\frac{1}{\delta}\right)   +  20\sqrt{\frac{2}{n\alpha} \log\left(\frac{1}{\delta}\right)} \bigg],
	\end{align*}
	where 
	\begin{align*}
          E_{\Kfold}(n,K,\alpha)
          =5 M 
          \sqrt{\frac{(d+1)K}{n\alpha}}  +\ffrac{5K}{(K-1)n\alpha}, 
	\end{align*}
        and where  $M>0$ 
        is a universal constant. 
        We obtain by a union bound, with probability $1-15\delta$,
	\begin{align}\label{ineq:union-bound}
        & \sup_{u \in \mathcal{U}}\bigg|\widehat{\mathcal{R}}_{CV,\alpha}(\Psi_{\alpha,u},
          V_{1:K}^\Kfold) -\mathcal{R}_{\alpha}\big(\Psi_{\alpha,u}(S_n)\big)\bigg|  \leq \nonumber\\
          &   \max(\mathcal{U})\bigg[
            E_{\Kfold}(n,K,\alpha)
	+ \frac{20}{3 n\alpha} \log\left(\frac{card(\mathcal{U})}{\delta}\right)   +  20\sqrt{\frac{2}{n\alpha} \log\left(\frac{card(\mathcal{U})}{\delta}\right)} \bigg].
	\end{align}
	Combining  inequalities \eqref{ineq:feature-select-uniform} and \eqref{ineq:union-bound}  yields the desired result.	
\end{proof}

\section{Numerical Experiments}\label{sec:expes}
The aim of our experiments is to illustrate the tightness of our bounds. The question we ask is whether the error (\emph{resp.} excess risk) upper bound of order  $\mathcal{O}(1/\sqrt{n\alpha})$ describes accurately the behaviour of the \CV\ error  (\emph{resp.} excess risk).  Note that the problem of obtaining lower bounds for the generalization risk of classification algorithms in extreme regions remains to this date an open question in the statistical learning literature dedicated to extremes.
For simplicity we limit our experiments to the \Kfold\ scheme with $K=10$. 

\subsection{\CV error for risk estimation}~

{\bf Experimental setting.} 
We consider the simple setting of a one dimensional threshold-based  classifier ($\mathcal{G}=\{\sign(X-\delta) \mid \delta \in \mathbb{R}\}$)  minimizing the Hamming loss $l(g,(X,Y))=\un{g(X)\neq Y}$.
We investigate the risk estimation error of the \CV\ estimator
$\rcvEx(\Psi_\alpha,V_{1:K})$ defined in~(\ref{"rcvEx-def"}) for
several values of $\alpha$ within the range $\left[1\%,
  20\%\right]$. 
In practice we compute $\rcvEx$ using a  dataset  $\DD_n$,
of size $n=2.10^{4}$ and evaluate the generalization risk of the trained rule
$\Psi_\alpha(S_n)$ on a test set ($\mathcal{D}_{\mathit{Test}}$, of size
$n_{test}=2.10^6$). We perform $n_{simu}=10^4$ experiments  and  we report the average  and the upper $0.90$ quantile of the  abolute error obtained over the $n_{simu}$ experiments. In other words we monitor the 
absolute generalization gap  $\big|\rcvEx(\Psi_\alpha,V_{1:K})-\TR_\alpha(\Psi_\alpha(S_n))\big|$ approximated by the quantity
$\big|\rcvEx(\Psi_\alpha,V_{1:K})-\ER_\alpha(\Psi_\alpha(\DD_n),\DD_{\mathit{Test}}) \big|$
and we report a Monte-Carlo approximation of its expected value 
and its quantile of order $0.90$ for different value of $\alpha$.

{\bf Datasets. } We generate a balanced binary classification dataset $Z_i=(X_i,Y_i) \in \mathcal{Z}=\mathbb{R}\times \{0,1\}$ with $\PP(Y=0)=\PP(Y=1)=1/2$. Both classes are sampled from a $t$-student distribution, with respective parameters $(\mu_i,\sigma_i,\nu_{i}), i=0,1$. 
We set $\mu_0=-\mu_1=1$, $\sigma_0=\frac{3}{5}$, $\sigma_1=3 $,  and $\nu_1=\nu_2=1.5$.
\subsection{\CV excess risk for model selection}
We now describe the empirical analysis of the model selection upper bound presented in Lemma \ref{lemma: model-selec-extreme}.

{\bf Experimental setting.}  We consider the problem of tuning the penalty parameter of a \Lasso logistic regression model. Note that, instead of using the constrained formulation of the \Lasso (cf. Equation (\ref{def:lasso-constraine})) , we consider in our experiments the Lagrangian formulation:
\begin{equation*}
\Psi_{\alpha,\lambda}({S})=\argmin_{\beta\in \mathbb{R}^d}\frac{1}{\alpha n_S} \sum_{i\in S}\left(c\left(\beta^T \Theta\left(X_i\right),Y_i\right)+\lambda\lVert \beta \rVert_1 \right)\indic{\lVert X_{i}\rVert>\lVert X_{(\lfloor \alpha n \rfloor)}\rVert } , 
\end{equation*}
with penality parameter  $\lambda  $ ranging in a finite logarithmic grid 
$$\Delta=\big\{10^{i/30}-1 \mid i\in \{ 1,\ldots, 30 \} \big\}.$$
The reason for using the penalized formulation in practice is mainly a computational one: the latter version can be solved by many standard optimization algorithms (stochastic gradient descent for instance) contrarily to the constrained one that requires special and time consuming optimization routines (see \eg\cite{lee2006efficient,homrighausen2017risk}).  Notice that we leave a gap between theory and practice to be filled in further work. Indeed, analyzing the penalized \Lasso requires different proof techniques and more assumptions. For example, \cite{homrighausen2017risk} work under a  realizability assumption while \cite{chetverikov2021cross}  make some moment assumptions.\\
In the sequel, we study the excess risk of the model selected by \Kfold cross-validation  $\risk_\alpha(\Psi_{\alpha,\hat \lambda}(S_n))-\risk_\alpha(\Psi_{\alpha,\lambda^*}(S_n))$ for several values of $\alpha$ within the range $\left[1\%,10\%\right]$. Similarly to the previous experiment we select $\hat \lambda$ using a dataset $\DD_n$ of size $n=10^4$, then we use a  test set $\DD_{test}$ of size $n_{test}=10^6$ to estimate $\risk_\alpha$ and choose $\lambda^*$ accordingly . Finally, we report the average model selection excess risk and its corresponding $0.9$ quantile for different values of $\alpha$ over $n_{simu}=10^4$ Monte Carlo simulations.

{\bf Datasets. }We generate a balanced binary classification dataset $Z_i=(X_i,Y_i) \in \mathcal{Z}=\mathbb{R}^{20}\times \{0,1\}$ with $\PP(Y=0)=\PP(Y=1)=1/2$. Both classes are sampled from a $t$-multivariate-student distribution, with respective sparse parameters $(\mu_i,\sigma_i,\nu_{i}), i=0,1$. We set $\mu_0=-\mu_1=(e_5,0,\dots,0)$,  $\sigma_0=\sigma_1=10\mathit{I}_{20}$, $\nu_1=\nu_2=1.5$ and $e_5=(1,\dots,1)$ is a 5 dimensional unit vector. 

\subsection{Results}
Figure
~\ref{fig:kfold}  displays the risk estimation error  of the cross-validation  estimator $\rcvEx(\Psi_\alpha,V_{1:K})$ as a function of $\alpha$ on the logarithmic scale. 
As suggested by our theoretical findings, the average error and its quantile  indeed decrease at rate $\mathcal{O}(1/\sqrt{n\alpha})$ as  a function of  $\alpha$. This confirms that our bounds may be sharp up to multiplicative constants.

One must note that in the model selection case (Figure~\ref{fig:excess-risk})  the rate of convergence appears to be faster than $1/\sqrt{n\alpha}$ for values of $n\alpha$ ranging between $500$ and $1000$. This is  not surprising insofar as it corroborates the findings of many  recent works where it is established that the \Lasso algorithm enjoys an \emph{algorithmic stability} \citep{bousquet2002stability} property which induces fast rates for \CV estimates \citep{celisse2016stability,abou-moustafa19a}.  For the smallest values of $n\alpha$ (less than $500$) 
a slower rate is observed.  
This might be explained by the findings of \cite{homrighausen2013lasso} and \cite{Chetverikov2021}  who show that, outside the context of extreme values,   the rate of convergence for cross-validation estimates using $n$ training samples deteriorates as the value of $c=\frac{\ln(d)}{\ln(n)}$ increases.  
\begin{figure}
  \centering
   \includegraphics[scale=0.5]{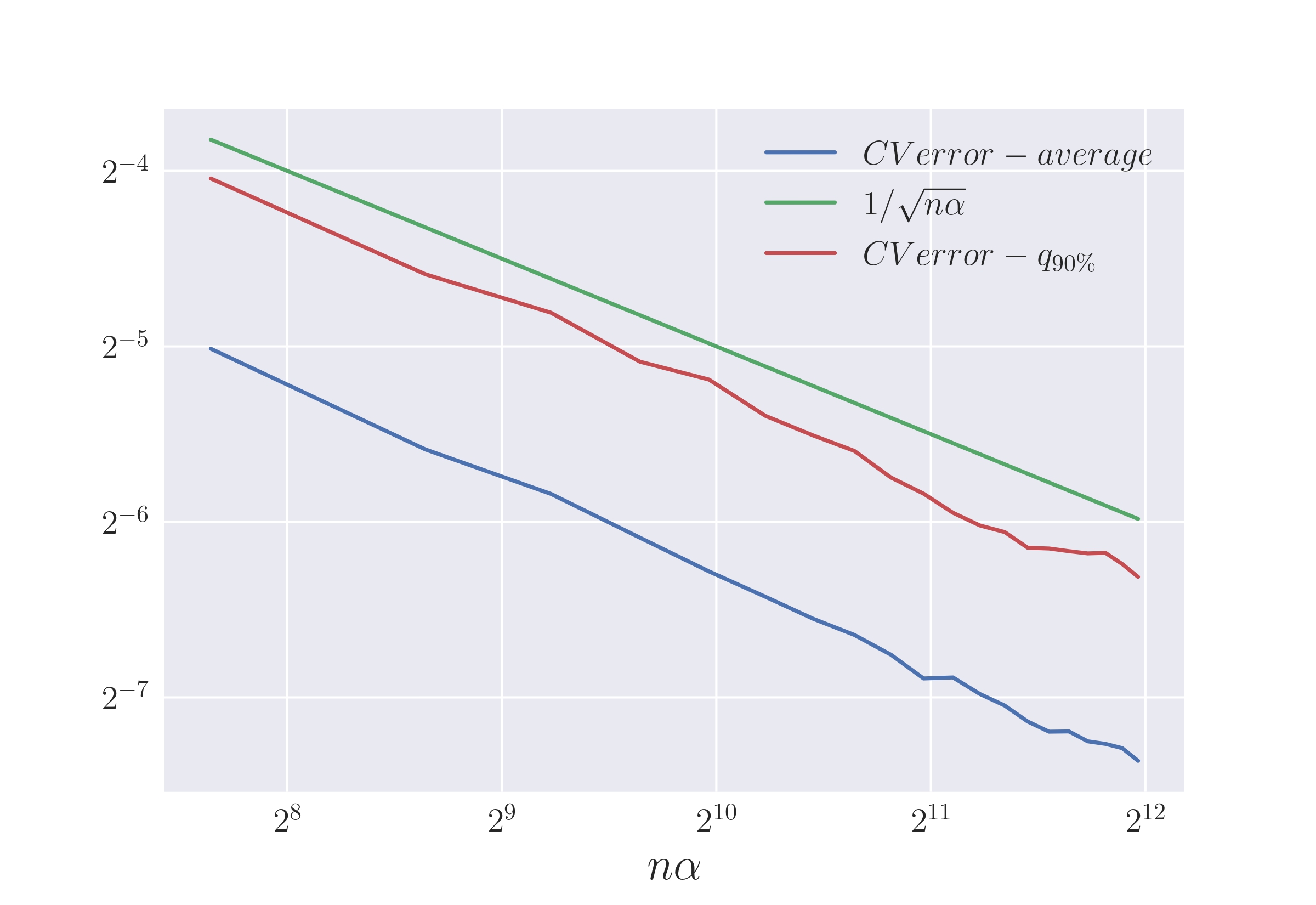}
  \caption{K-fold \CV\ risk estimation absolute error as a function of $n\alpha$ (logarithmic scale): mean and upper quantile at level $0.90$}
  \label{fig:kfold}
\end{figure}

\begin{figure}
	\centering
	\includegraphics[scale=0.5]{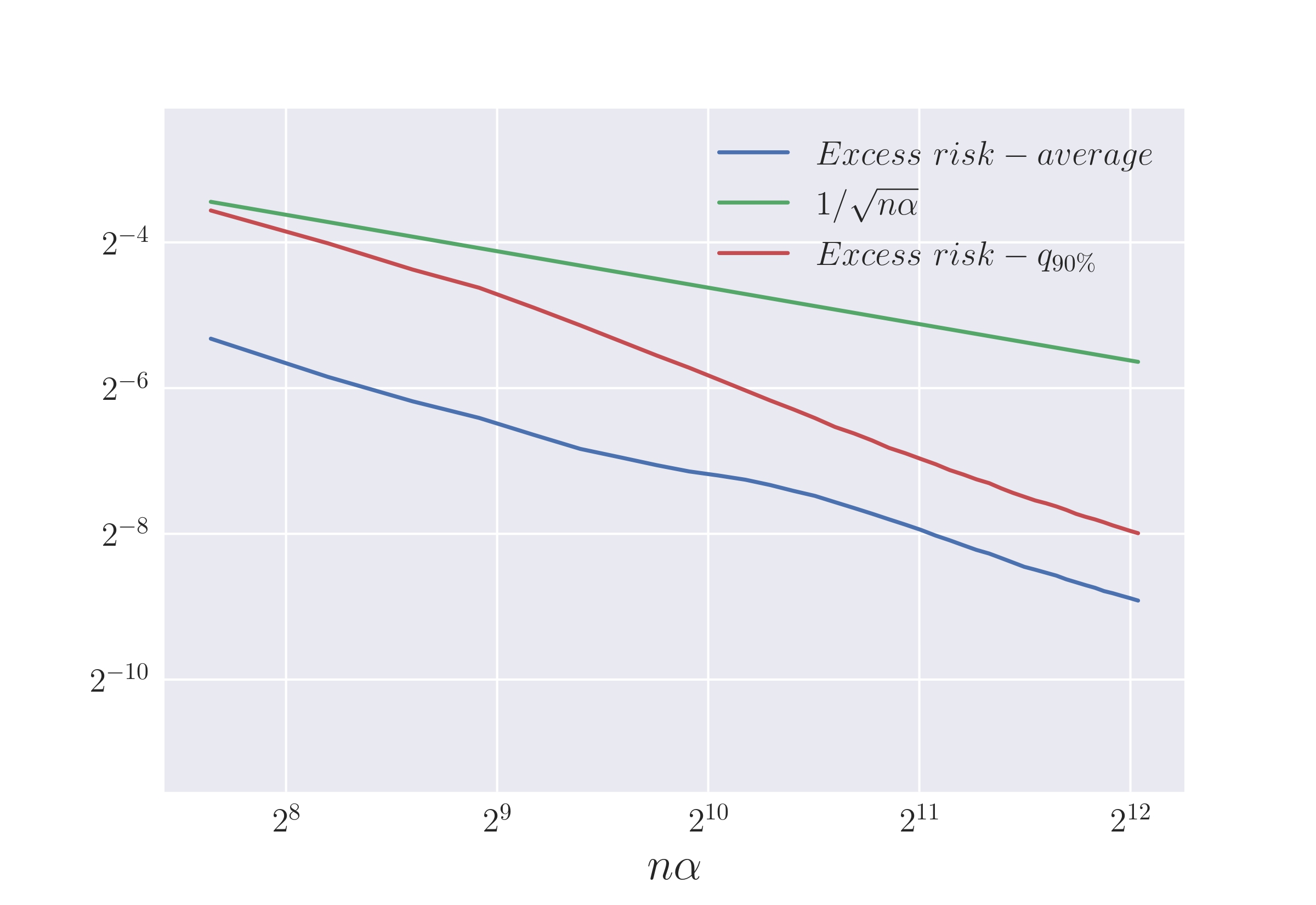}
	\caption{K-fold \CV\ excess risk as a function of $n\alpha$ (logarithmic scale): mean and upper quantile at level $0.90$ }
	\label{fig:excess-risk}
\end{figure}

\section{Discussion}\label{sec:disc}
This paper is a first step towards a theoretical understanding of \CV\ properties for algorithm dedicated to Extreme Value Analysis. It opens several avenues for further research.

First, sanity check bounds are difficult to improve upon without further assumptions regarding  algorithm stability,
\eg loss stability as introduced in \cite{kumar2013near}, where the variance of the \Kfold \CV is shown to be $\mathrm{K}$-times smaller than that of the empirical risk.
A lead for further research  would be to consider such wider classes of
algorithms, for which various stability
hypotheses replace advantageously the assumption of finite \VC
dimension
(see \eg
\cite{kearns1999algorithmic,BousquetElisseeff-2000,bousquet2002stability}). 

Another promising avenue would be to 
relax our assumption of a bounded cost functional in order to extend our results to the setting of extreme quantile regression. Also it would be valuable to  
extend the analysis to  unsupervised problems  which are the main field of application for 
\EVT.  

From a practical perspective, we have restricted the analysis to a
\CV\ strategy where the extreme threshold is chosen randomly as the
$k^{th} = \alpha n^{th}$ order statistics, however this random choice
is made once and for all in the procedure, that is, the same threshold
applies to all folds in the \CV\ scheme, as discussed in Section~\eqref{eq:def-risk-extreme}. 
In other words the threshold
choice is not seen as a part of the learning rule. Using \CV\ to
assess the performance of different threshold choices would require
taking the opposite view and letting the threshold vary across
different folds. How to analyse this alternative \CV\ scheme is left
as an open question.

Finally, 
 even though our work is motivated by the \EVT framework, no regular variation assumption is needed for our results to hold regarding \CV risk estimates, and the precise definition of the rare class $\mathbb{A}$ is unimportant, only the small probability $\alpha$ is. Thus we believe that our results and techniques of proof could
   serve in other statistical learning contexts involving a rare class, such as imbalanced multi-class classification where the learner  must distinguish between several  minority classes.

\section{Amendments}

\subsection{Ethical Approval}
Not applicable.

\subsection{Availability of supporting data}

Not applicable.

\subsection{Authors contribution}
All authors made equal contributions to the analysis, the drafting and critical revision of the manuscript. Each author has approved the final version of the manuscript and agrees to be accountable for all aspects of the work, ensuring its integrity and accuracy.

\subsection{Fundings}
Anne Sabourin’s work has been  partially funded by the ANR projects T-REX and EXSTA. Anass Aghbalou was supported by the chair DSAIDIS. This research has been conducted for Patrice Bertail as part of the project Labex MME-DII (ANR11-LBX-0023- 01).
\subsection{Competing interests}
The authors declare that they have no competing interests.
\subsection{Acknowledgments}
We thank the associate editor and two anonymous referees for their comments which have improved the readability of the paper. 
\bibliographystyle{apalike}
\bibliography{biblio_crossval}
\newpage
\appendix
  \section{Classification on Exreme Covariates: additional results}\label{sec:technicalClassif}


We start-off this section with some necessary background relative to
Classification in Extreme Regions (\cite{jalalzai2018binary})  in
Section~\ref{sec:backgroundClassif}. Next we show in Section~\ref{sec:appliPrediction} that the prediction
problem in multivariate regularly varying vectors described in
Example~\ref{ex:relevanceRalpha-RV} indeed fits in the framework of
\cite{jalalzai2018binary}, thus providing a generic example of the
usefulness for \EVA of the classification
setting 
considered in the present paper.  Finally in Section~\ref{sec:surrogateLoss} we extend the main probabilistic results of \cite{jalalzai2018binary} in order to encompass the setting of Logistic regression considered in Section~\ref{sec:applications} from  the main paper.

\subsection{Background on Classification in Extreme Regions}\label{sec:backgroundClassif}
We  recall here the main notations and assumptions of
\cite{jalalzai2018binary}'s framework. Consider a random pair $(X,Y)$
with $X\in\rset^d$, $Y \in \{1,-1\}$.  The goal of classification in
extreme regions is to minimize (for large values of $t$) the
conditional risk of a classifier $g:\rset^d\mapsto \{-1,1\}$, defined
as $R_t(g ) = \PP[g(X)\neq Y \given \|X\|>t]$ for some norm
$\| \point\|$ on $\rset^d$. More precisely consider the limit superior
at infinity, $R_\infty(g) = \limsup_{t\to\infty} R_\infty(g)$. The
problem may be written as
\begin{equation*}
  \underset{g\in \mathcal{G}}{\mathrm{minimize } }\;  R_\infty(g), 
\end{equation*}
where $\mathcal{G}$ is  a family of measurable functions $\rset^d\to\{-1,+1\}$. 

It is assumed in \cite{jalalzai2018binary}  that each class distribution $\mathcal{L}(X\;|\; Y= \sigma 1\}$ for  $\sigma\in\{+,-\}$ is multivariate regularly varying with same tail index, \ie 
there exist  limit measures $\mu_\sigma$ which are  finite on sets bounded away from $0_{\rset^d}$, such that 
\begin{equation}
  \label{eq:RV_class}
  b(t) \PP[ t^{-1} X \in A \; |\; Y = \sigma 1 ]\xrightarrow[t\to\infty]{} \mu_\sigma(A), 
\end{equation}
for any measurable set $A\subset\rset^d $ such that $0_{\rset^d}\notin\partial(A)$  and $\mu_\sigma(\partial A) =  0$.  Standard references regarding multivariate regular variations include \cite{resnick2013extreme,hult2006regular}. 
In~\cite{jalalzai2018binary} the authors assume that $\alpha=1$ and
$b(t)=t$ to alleviate notations, mainly because this is verified in
particular 
when the  univariate margins of $X$ are standardized to unit Pareto. Also
the covariate $X$ is assumed to have non-negative entries,
$X\in\rset_+^d$ almost surely. However these simplifications are not
used in the proofs which are also valid in the general case considered here, that is if $X$ is allowed to have
non-positive entries and  if $b$ is  a regularly varying function 
with 
index $\alpha>0$, \ie $b(tx)/b(t) \to x^{-\alpha}$ as
$t\to\infty$, for all fixed $x>0$.
The marginal distribution of the random covariate  $X$ is then also regularly varying  with scaling function $b$ and limit measure $\mu = \pi \mu_+ + (1-\pi)\mu_-$, where $\pi = \PP[Y=1]$.
Consider  the probability of the positive class above covariate
level $t$, $\pi_t = \PP[Y= +1 \;|\; \|X\|\ge t]$. 
Denoting by ${\mathcal{B}}$ the unit ball
${\mathcal{B}}= \{x\in \rset^d: \|x\|_l\le 1 \}$, it holds that
$$
\pi_t \xrightarrow[t\to\infty]{}  \pi_\infty =  \pi \frac{\mu_+({\mathcal{B}}^c)}{\mu({\mathcal{B}}^c)}.
$$
It is assumed 
that $\pi_\infty\notin\{0,1\}$ (otherwise the classification problem in the limit is trivial). 

A key probabilistic object in the context of binary classification is 
the Bayes regression function, classically defined (almost surely) as
$\eta(X) = \PP[Y = +1 \; |\; X]$. Then it is a well known fact that
the classifier $g^*(x) = 2\un{\eta(x)\ge 1/2} - 1$ 
minimizes the $0$--$1$ loss among all measurable prediction
functions. 
Under the regular variation Assumption~\eqref{eq:RV_class}, 
one may define an `extreme pair'
$(X_\infty,Y_\infty)$ 
on $(\rset^d\setminus{\mathcal{B}})\times \{-1,1\}$ with distribution given
by
\begin{equation}\label{eq:limitpair}
  \PP[X_\infty \in A, Y_\infty = \sigma 1 ]
  = \lim_{t\to\infty}\PP[ t^{-1} X \in A, Y = \sigma 1 \given \|X\|\ge t]
  = \pi_{\infty,\sigma} \frac{\mu_\sigma(A)}{\mu_\sigma({\mathcal{B}}^c)},
\end{equation}
where $\pi_{\infty,+}=\pi_{\infty}$ and $\pi_{\infty,-} = 1-\pi_\infty$, for $A\subset \rset^d\setminus {\mathcal{B}}$ such that
$\mu_\sigma(\partial A)=0$. A crucial quantity in \cite{jalalzai2018binary} is the Bayes regression function $\eta_\infty$ relative to the extreme pair $(X_\infty, Y_\infty)$, defined as $\eta_\infty(X)= \PP[Y_\infty = 1 \given X_\infty]$, almost surely.  It is shown in the latter reference that $\eta_\infty$ depends on $x$ only through the angular component $\theta(x)$, \ie $\eta_\infty(tx) = \eta_\infty(x)$ for all $t\ge 0$. This fundamental observations is the main heuristic justification behind the \ERM algorithm they propose, which consists in minimizing an empirical risk over a class of \emph{angular} predictors of the kind $g(x) = h\circ\theta(x)$.
To show that this strategy indeed yields a predictor with good generalization performance in terms of $R_\infty$,  a key assumption is the following
\begin{condition}\label{assum:uniform-convergence}[Assumption 2 in \cite{jalalzai2018binary}]
  The regression function of extremes $\eta_\infty$ is continuous on
  the unit sphere $S=\left\{\omega\in \rset^d\;:\;  \|\omega\|=1\right\}$
  and
  $$\sup _{\omega \in S}\left|\eta(t \omega)-\eta_{\infty}(\omega)\right|
  \underset{t \rightarrow \infty}{\longrightarrow} 0,$$
  where $\eta(x)=\PP(Y=1\mid X=x)$ is the standard regression
  function.
\end{condition}
Under Condition~\ref{assum:uniform-convergence} it is shown (Theorem 1 in \cite{jalalzai2018binary}) that $R_\infty$ is minimized by a  classifier based on $\eta_\infty$, namely $g_\infty^* (x)= \un{\eta_\infty^*(\theta(x))\ge 1/2}$. We shall show in Section~\ref{sec:surrogateLoss} that this key property is preserved when considering the logistic loss instead of the $0$--$1$ loss. For now we investigate in the next section how this setting may be applied to the problem of predicting the occurrence of an extreme value of a missing component, given that the other components are large.

\subsection{Application to prediction in regularly varying vectors}\label{sec:appliPrediction}
We now turn to the context envisioned in Example~\ref{ex:relevanceRalpha-RV}  from the main paper. Recall that   $Z = ( Z_1,\ldots, Z_{d+1})\in\rset^{d+1}$ is  a random vector and let, for some $c\in(0,1)$,
\begin{equation}\label{eq:defineXY-prediction}
  X  = (Z_1,\ldots, Z_{d}) \text{ and }
  Y = \un{ \frac{Z_{d+1}}{\| (Z_1,\ldots, Z_{d+1}) \| }  > c } = \un{ \theta(Z)_{d+1} >c}, 
  \end{equation}
where $\|\point\|$ is the  $\ell^p$ norm on $\rset^d$ for some $p\ge 1$ and where we recall $\theta(z) = \|z\|^{-1} z$. Assume that $Z$ has a continuous density $p$ with respect to the Lebesgue measure on $\rset^{d+1}$. Let us assume 
that regular variation of the density of $Z$ holds (\cite{de1987regular,Cai2011}), \ie for some positive function $q: \rset^{d+1}\to\rset_+$,   for some regularly varying function  $b$ on $\rset_+$ with index $\alpha>0$ (\ie $b(tx)/b(t)\to x^{-\alpha}$ for fixed $x>0$), 
\begin{equation}\label{eq:regvarDensity}
 \sup_{\omega \in\sphere_{d+1}} 
      | t^{d+1}b(t) p(t\omega)   - q(\omega) | 
      \xrightarrow[t\to\infty]{} 0,
\end{equation}
where $q$ is a function defined on $\rset^{d+1}\setminus\{0_{\rset^{d+1}}\}$ and $\sphere_{d+1}$ is the unit sphere in $\rset^{d+1}$. Condition~\eqref{eq:regvarDensity} implies that  $Z$ is regularly varying with index $\alpha$, scaling function $b$  and limit measure $\mu_Z$ which admits $q$ as a density. Also the density $q$ is homogeneous,  $q(tx) = t^{-d-1-\alpha} q(x)$. Finally, a useful property is that if~\eqref{eq:regvarDensity} holds then also 
\begin{equation}
    \label{eq:cvDensitybis}
    \sup_{\lVert z \rVert \ge 1} 
      | p(t z) t^{d+1}b(t)  - q(z) | 
      \xrightarrow[t\to\infty]{} 0, 
    \end{equation}
 see  \cite{de1987regular} and \cite{Cai2011} for proofs. 

The following result ensures that  the \ERM strategy proposed in~\cite{jalalzai2018binary}, which is the leading example of the present paper,  enjoys sound theoretical guarantees in the prediction setting considered here.   A similar result has been obtained recently in~\cite{huet2023regression} for regression on extreme regions with a continous target, as mentioned in Example~\ref{ex:relevanceRalpha-RV} from the main paper.  
Although our  proof bears strong similarities with the argument developed in~\cite{huet2023regression} in the regression setting, we give the full details here in the classification framework for the sake of completeness. 

\begin{proposition}\label{prop:regvardensity}
 If regular variation of densities~\eqref{eq:regvarDensity} holds, then 
the pair $(X,Y)$ defined in~\eqref{eq:defineXY-prediction} satisfies \cite{jalalzai2018binary}'s assumption, namely regular variation of class distributions~\eqref{eq:RV_class} and Condition~\ref{assum:uniform-convergence}. 
\end{proposition}
\begin{proof}
  For conciseness in the sequel, for $z\in\rset^{d+1}$, we write $z_{1:d}$ for the vector $(z_1,\ldots, z_d)$. Also 
  we recall $\theta(z) = \|z\|^{-1}z$. Also we can and will asssume that $b(t)$ in \eqref{eq:regvarDensity} is chosen as $b(t) = \PP[\|Z\|>t]^{-1}$. 
  \paragraph{Proof of~\eqref{eq:RV_class}}
  For any bounded, continuous function $\psi: \rset^d\to \rset$ which vanishes on a neighborhood of $0$, denoting by $\tilde \psi$ the function defined on $\rset^{d+1}$ as $\tilde\psi(z) = \psi(z_{1:d})$ we have 
\begin{align*}
  b(t) \EE[ \psi(t^{-1} X) \given Y=1 ]
  & = b(t) \EE\Big[\underbrace{ \tilde \psi(t^{-1} Z) \un{\theta(t^{-1}Z)_{d+1}} >c }_{\psi'(t^{-1}Z)}\Big]
    / \PP[\theta(Z)_{d+1} >c].
\end{align*}
The set of discontinuity points of $\psi'$ is
$\mathcal{D} = \{z\in\rset^{d+1}: \theta(z)_{d+1} = c\}$. Because
$\mu_Z$ has a density $q$, we have $\mu_Z(\mathcal{D})=0$ whence the
above display yields, with $\pi  = \PP[Y=1] = \PP[\theta(Z)_{d+1} >c]$,
\begin{align*}
  b(t) \EE[ \psi(t^{-1} X) \given Y=1 ]\xrightarrow[t\to\infty]{}
  \pi^{-1} \int_{\rset^{d+1}\setminus\{0\}} \psi(z) \un{\theta(z)_{d+1}> c} \ud\mu_Z(z). 
\end{align*}
Thus~\eqref{eq:RV_class} holds with $b(t) = \PP[\|Z\|>t]$,
$\mu_+(A) =\pi^{-1} \mu_Z(A\cap \mathcal{C}_+)$ where $\mathcal{C}_+$ is the cone
$\mathcal{C}_+ = \{z\in\rset^{d+1}\setminus\{0\}, \theta(z)_{d+1}>c \}$. The same argument for the negative class shows that $\mu_-(A) = (1-\pi)^{-1}\mu_Z(A\cap \mathcal{C}_-)$ where 
$\mathcal{C}_- = \{z\in\rset^{d+1}\setminus\{0\}, \theta(z)_{d+1}\le c \}$.
 \paragraph{Proof that Condition~\ref{assum:uniform-convergence} holds true}
We have for $x\in\rset^d\setminus\{0_{\rset^d}\}$,
\begin{align}
  \eta(x) & = \PP[Y=1|X=x] = \PP[\theta(Z)_{d+1}>c \given Z_{1:d} =x]\nonumber \\
  & = \int_\rset\un{\theta(x, s)_{d+1} >c} \frac{p(x,s)}{p(x)} \ud s \label{eq:regfun}
\end{align}
where we write for simplicity $p(x,s)$  for the density of $Z$ at point $(x,s)$ and $p(x)$ for the marginal density of $Z_{1:d}$ at $x$.  

On the other hand, let $Z_\infty$ be a random vector in $\rset^{d+1}$ distributed according to the restriction of $\mu_Z(\point)$ to ${\mathcal{B}}_{d+1}^c$, where ${\mathcal{B}}_{r}$ is the $r$-dimensional unit ball for $r\ge 2$. Then $Z_\infty$ has density function $q$.  The argument of the latter paragraph shows that the limit pair in~\eqref{eq:limitpair} has distribution on ${\mathcal{B}}_d^c\times\{-1,1\}$  given  by 
$$\mathcal{L}(X_\infty, Y_\infty) \eqd  \mathcal{L}(Z_{\infty,1:d},  2 \un{ \theta(Z_{\infty})_{d+1}>c }-1 \;|\; \|Z_{\infty,1:d}\|\ge 1). $$
Thus the regression function for the extreme pair is, for $x\in{\mathcal{B}}_d^c$, 
\begin{align}
  \eta_\infty(x) &=  \PP[\theta(Z_\infty)_{d+1}>c  \given Z_{\infty, 1:d} =x  ]\nonumber \\
                 & = \int_\rset\un{\theta(x, s)_{d+1} >c} \frac{q(x,s)}{q(x)} \ud s
                   \label{eq:regfun_ext}
\end{align}
where for $x\in\rset^d\setminus\{0_{\rset^d}\}$, we denote $ q(x) =  \int_\rset q(x,s)\ud s$. Notice that for such $x$, and for 
$s\in\rset$,
$t>0$,  we have $q(tx,ts) = t^{-d-1-\alpha} q(x,s)$ whereas  $q(tx) = t^{-d-\alpha} q(x)$. Thus we have immediatly, for $t>0$ and $x\in\rset^d\setminus\{0\}$, 
\begin{align*}
  \eta_\infty(tx)&  = \int_\rset\un{\theta(tx, s)_{d+1} >c} \frac{q(tx,s)}{q(tx)} \ud s \\
                 & =   \int_\rset\un{\theta(tx, ts')_{d+1} >c} \frac{q(tx,ts')}{q(tx)} t\ud s' \\
                 & =\int_\rset\un{\theta(x, s')_{d+1} >c} \frac{q(x,s')}{t q(x)} t\ud s' \\
  & = \eta_\infty(x).
\end{align*}

Combining~\eqref{eq:regfun} and~\eqref{eq:regfun_ext} we obtain  that for all $\omega\in S_d$ and $t\ge 1$,  by a change of variables, 
\begin{align}
  |\eta(t\omega) -\eta_\infty(\omega)|
  &  =  |\eta(t\omega) -\eta_\infty(t\omega)| \le \int_\rset \left|  \frac{p(t\omega,s)}{p(t\omega)}   -  \frac{q(t\omega,s)}{q(t\omega)}  \right| \ud s \nonumber \\
  & = \int_\rset \left|  \frac{p(t\omega,ts')}{p(t\omega)}   -  \frac{q(t\omega,ts')}{q(t\omega)}  \right| t\ud s' \nonumber \\
  &= \int_\rset \left|  \frac{p(t\omega,ts)}{p(t\omega)}   -  \frac{q(\omega,s)}{t q(\omega)}  \right| t\ud s \nonumber \\
  &= \int_\rset \left|  \frac{p(t\omega,ts)}{t^{-1}p(t\omega)}   -  \frac{q(\omega,s)}{q(\omega)}  \right| \ud s \nonumber \\
    &= \int_\rset \left|  \frac{h(t) p(t\omega,ts)}{t^{-1}h(t) p(t\omega)}   -  \frac{q(\omega,s)}{q(\omega)}  \right| \ud s, 
    \label{eq:boundDiffRegfun}
\end{align}
where we have put $h(t) = t^{d+1} / \PP[\|Z\|>t]$. The rest of the proof 
relies on the following two lemmas, stated and proved in the supplementary material of the cited reference,  which we repeat here for ease of reading.
\begin{lemma}[Uniform Convergence of marginals of $p$]\label{lem:Unifcvmarginal}~
  If the random vector $Z$ satisfies~\eqref{eq:regvarDensity}, 
  we have
  \begin{equation*}
    \begin{gathered}
      \sup_{\omega\in\sphere_d} \left| \int_\rset t^{-1} h(t)  p(t \omega, z) \ud z - q(\omega) \right| \xrightarrow[t\to\infty]{} 0, \quad \text{where } \\
      q( \omega) = \int_{s\in\rset} q(\omega,s) \ud s,  \quad  \text{ and }
      h(t) = t^{d+1}/\PP[\|Z\| \ge t].     
    \end{gathered}
 \end{equation*}
\end{lemma}

\begin{lemma}[Upper and lower bounds for the marginals of $q$]\label{lem:boundsMarginalQ}
  Under the condition that~\eqref{eq:regvarDensity} holds and  that $q$ in~\eqref{eq:regvarDensity} is uniformly lower bounded on the sphere, \ie 
  \begin{equation}
    \label{eq:lowerboundq}
    \inf_{\tilde \omega\in\sphere_{d+1}} q(\tilde \omega)>0, 
  \end{equation}
  there exist positive constants $c,C >0$ such that  for all $\omega\in\sphere_d$,
   \begin{align*}
    c \le  \int q(\omega,z)\ud z \le C.
   \end{align*}
 \end{lemma}
 An additional useful result (also proved in~\cite{huet2023regression}) is that, letting $A(t,s) = \sup_{\omega\in\sphere_{d}} \frac{h(t)}{ h(t\| (x,s)\| )} $ we have for some $t_0\ge 0$, for all $s\in\rset$
 \begin{equation}
   \label{eq:supboundh}
   A(t,s) \le 2 (1+s^p)^{\frac{-d-\alpha/2-1}{p}}. 
 \end{equation}
 We now get back to the proof that Condition~\ref{assum:uniform-convergence} holds true. From~\eqref{eq:boundDiffRegfun} we obtain
 \begin{align}
   \sup_{\omega\in\sphere_d}| \eta(\omega) - \eta_{\infty}(\omega) |
   & \le  \int_\rset \sup_{\omega\in\sphere_d}
     \underbrace{
     \left|  \frac{h(t) p(t\omega,ts)}{t^{-1}h(t) p(t\omega)}   -  \frac{q(\omega,s)}{q(\omega)}  \right| \ud s
     }_{J(t,s)}\label{boundsupJ}
    \end{align}
    For fixed $s$ the integrand $J(t,s)$ in~\eqref{boundsupJ} converges to $0$ as $t\to\infty$. Indeed write
    \begin{align*}
      J(t,s)\le \sup_{\omega\in\sphere_d}  h(t) p(t\omega,ts)\Big|\frac{1}{t^{-1}h(t) p(t\omega)} - \frac{1}{q(\omega)}  \Big| +
      \sup_{\omega\in\sphere_d} \frac{1}{q(\omega)}
      \Big| h(t) p(t\omega,ts) - q(\omega,s)  \Big|.
    \end{align*}
As $t\to\infty$, the first term on the right-hand side goes to zero because of Lemmas~\ref{lem:Unifcvmarginal} and~\ref{lem:boundsMarginalQ}. The second term
goes to zero because of Lemma~\eqref{eq:lowerboundq} and our working hypothesis of regular variation of densities~\eqref{eq:regvarDensity}.

Their remains to show that $J(t,s) $ is upper bounded by an integrable function of $s$ in order to apply dominated convergence. This is done by decomposing $J(t,s)$ as 
   \begin{align*}
     J(t,s) & \le
     \underbrace{ \sup_{\omega\in\sphere_d} \frac{h(t)}{ h(t\lVert (\omega,s)\rVert )}}_{A(t,s)}
      \underbrace{\sup_{\omega\in\sphere_d}
      \frac{ h(t\lVert (\omega,s)\rVert ) p\Big( t\lVert (\omega, s)\rVert  \theta(\omega,s) \Big)
              }{t^{-1}h(t)p(t \omega)}}_{B(t,s)} +  \dots \\
            & \dots   \underbrace{ \sup_{\omega \in \sphere_d}
              \frac{q(\omega,s)}{q(  \omega)}}_{C(t,s)} \\
     &= A(t,s) B(t,s) + C(t,s). 
                 \end{align*}
 From~\eqref{eq:supboundh} we have that for $t$ large enough,  $A(t,s)\le 2 (1+s^p)^{\frac{-d-\alpha/2-1}{p}}$ which is an integrable function of $s$. 
 Also, using~\eqref{eq:cvDensitybis}, both the numerator and the denominator in the definition of
 $B(t,r)$ converge as $t\to +\infty$, uniformly over
 $\omega \in\sphere_d$ and $s\in\rset$, respectively to $q(\omega,s)$
 and $q(\omega)$. Now $q(\omega)$ is uniformly lower bounded 
 (Lemma~\ref{lem:boundsMarginalQ}) and $q( \omega ,r)$ is uniformly upper bounded
 for $\omega\in\sphere_d$ (by homogeneity). Thus, for $t$ large enough, for all $r$, $B(t,r)\le C$  for some constant $C>0$. 
Finally by homogeneity of $q$ and Lemma~\ref{lem:boundsMarginalQ} again, we have
\begin{equation*}
C(t,s) \le \sup_{\omega\in\sphere_d}\lVert  (\omega,s)\rVert ^{-\alpha-d-1} \frac{\max_{\tilde \omega \in \sphere_{d+1}} q(\tilde \omega)}{c} = (1 + s^p)^{\frac{-\alpha-d-1}{p}} \frac{\max_{\tilde \omega\in \sphere_{d+1}} q(\tilde \omega)}{c}.
\end{equation*}
The latter expression is an integrable function of $s$. We have shown that $J(t,s)$ is upper boudned by an integrable function of $s$, which completes the proof of the uniform convergence of $\eta$ in Condition~\ref{assum:uniform-convergence}.

As a last step we prove that $\eta_\infty$ is continuous on $\sphere_{d}$. 
Recall that for  $x\in \rset^d \setminus \{0_{\rset^d}\}$, 
 $ \eta_\infty(x) = \frac{1}{q(x)}\int_\rset \un{
    \frac{s}{\lVert (x,s)\rVert } > c}q(x,s)ds. $
Also for $\omega\in\sphere_d$, because $\|\point\|$ is the $\ell^p$ norm 
we have
\begin{align*}
  \frac{s}{\lVert (\omega,s)\rVert } > c \iff s>c(1-c^p)^{-1/p}:=g(c). 
\end{align*}
Thus for $\omega\in\sphere_d$
\begin{equation*}
  \eta_\infty(\omega) = \frac{1}{q(\omega)}\int_{g(c)}^\infty q(\omega,s)ds. 
\end{equation*}
The continuity of $p$ implies that of $q$ under our assumption of uniform convergence of densities~\eqref{eq:regvarDensity}. 
 By homogeneity of $q$, we have
\begin{equation*}
q(\omega,z) =   \lVert (\omega,s)\rVert ^{-d-\alpha-1}q\big(\theta(\omega,s)\big) \le (1+s^p)^{\frac{-d-\alpha-1}{p}}\max_{\omega \in \sphere_{d+1}}q(\omega).
\end{equation*}
Since $s \mapsto (1+s^p)^{\frac{-d-\alpha-1}{p}}$ is integrable over $\rset$, the dominated convergence theorem for continuity applies,  
proving the continuity of $\eta_\infty$ on $\sphere_d$. 
\end{proof}



\subsection{
 Optimality of angular predictors above asymptotically high levels for  general loss functions}\label{sec:surrogateLoss}

In this section we  extend \cite{jalalzai2018binary}'s main probabilistic result (Theorem~1 in the cited reference) to  general bounded loss functions $c(g,z)$ where $z=(x,y)\in\rset^d\times\{-1,1\}$ and real-valued predictors $g:\rset^d\to\rset$, in order to cover in particular  the case of  logistic regression. Indeed in such a case $c(g(x), y) = \log(1+\exp(-g(x)y))$. 
In  \cite{jalalzai2018binary} the focus is instead on the  $0$--$1$ loss $c(g(x), y) = \un{g(x)y<0}$. Our aim is to show that for generic loss functions, there exists a predictor which depends on the angle only, which minimizes the  risk above  asymptotically large radial levels. 


 As an extension of the binary classification problem detailed in
 Section~\ref{sec:backgroundClassif},
 let 
 $R_{t}(g)=\EE\left[ c(g,Z)\mid \|X\| \geq t \right]$ where $Z=(X,Y)$
 and let
 $R_{\infty}\left(g\right)=\limsup\limits_{t \rightarrow \infty}
 R_{t}\left(g\right)$.  The probabilistic objects
 $\mu_\sigma, \sigma\in\{-,+\}$, $(X_\infty,Y_\infty)$, $\pi,  \pi_t, \pi_\infty$,
 $\eta,\eta_\infty$ are defined as in
 Section~\ref{sec:backgroundClassif} and we assume that Condition~\ref{assum:uniform-convergence} holds true. Recall that the regression function $\eta_\infty$ for the extreme pair $(X_\infty,Y_\infty)$ is constant  along rays, 
 that is $\eta_\infty(tx)=\eta_\infty\left(x\right)$ for all $t\geq 0$, in other words  $\eta_\infty(x)=\eta_\infty\left(\theta\left(x\right)\right)$.

	

We further assume that 
  $$\forall z=(x,y)\in \rset^d\times\{-1,1\},\quad c(g,z)=\phi(g(x)y),$$
 for some function $\phi$,  so that
 \begin{equation}\label{eq:risk-formula}
 	R_t(g)=\EE\left[\eta\left(X\right)\phi\left(g(X)\right)+\left(1-\eta\left(X\right)\right)\phi\left(-g(X)\right) \given \|X\|>t\right].
 \end{equation}
 This is the case for logistic regression with
 $\phi(r) = \log(1+e^{-r})$. Another well known feature of logistic
 regression is that the (standard) risk $R_0(g) = \EE[\phi(g(X)Y)]$ is minimized by the predictor 
 $g^*(x) = a\circ\eta(x)  $ with  $a(r) = \log(r/(1-r))$ (see \emph{e.g.} \cite{zhang2004statistical}).
 As an immediate consequence the prediction function
 \begin{equation}
   \label{eq:defginfty}
g_\infty^* (x) = a\circ\eta_\infty(x) = \log(\eta_\infty(x)/(1-\eta_\infty(x))  )   
 \end{equation}
minimizes the risk for the extreme pair, $\EE[c(g(X_\infty, Y_\infty)) ]$. 
 Notice that the function $a$ is uniformly continuous on any compact interval $[\eta_1,\eta_2]\subset (0,1)$, and so are  the functions $\phi\circ a$ and $\phi\circ(-a)$ on such an interval.

 We may now state the main result of this section, which covers the logistic case considered here. 

\begin{theorem}\label{theorem:classif-extreme-angular}
  Let the function $\phi$ be such that there exists a function $a:[0,1]\to\rset $, such that for any random pair $(X,Y)$,
  the classical risk $R_0$ admits as a minimizer (among all measurable
  functions) a prediction function $g^*$ of the form
  $g^*(x) = a\circ \eta(x) $.  Assume in addition that the functions
  $r\mapsto \phi\circ a(r) $ and $r\mapsto \phi \circ(-a)(r)$ are uniformly
  continuous over $[\eta_1,\eta_2]= \range(\eta) \subset [0,1]$. If the regular
  variation assumption~\eqref{eq:RV_class} and
  Condition~\ref{assum:uniform-convergence} hold true, then the predictor 
  $g_\infty^* = a\circ \eta_\infty$ minimizes the risk at infinity $R_\infty = \limsup R_t$.
\end{theorem}
The practical impact of Theorem~\ref{theorem:classif-extreme-angular}
is that, as in \cite{jalalzai2018binary}, there exists a
discrimination function $g_\infty^*$ that minimizes the asymptotic
risk, and which depends on the angle of the input only.  In the case
of logistic regression, in view of the discussion above the statement,
the assumptions of Theorem~\ref{theorem:classif-extreme-angular} are
met as soon as the regression function $\eta$ is bounded away from $0$
and $1$, that is $\range(\eta)\subset [\eta_1,\eta_2]$ for some
$0<\eta_1\le \eta_2 < 1$.

\begin{proof}[Proof of Theorem~\ref{theorem:classif-extreme-angular}]
  First notice that $R_t(g^*)\leq R_t(g)$ for all measurable function $g$. 
  Taking the limit superior on both sides, we obtain that $g^*$  minimizes $R_\infty$ as well. 
	Thus we only need to show that $R_t(g^*) - R_t(g^*_\infty)\to 0$. 
	Now using~\eqref{eq:risk-formula} and the fact that $g^*(x) = a\circ\eta(x)$ and $g_\infty^*(x) = a\circ\eta_\infty(x)$,  write 
	\begin{align*}
         | R_t(g^*_\infty)-R_t(g^*)|
          &= \Big| \; \EE[ \Delta(X)  \given \|X\|>t ] \; \Big|
             \le \sup_{x\in\rset^d : \|x\|>t} | \Delta(x)|, 
        \end{align*}
        where
        \begin{align*}
          \Delta(x) & = \eta(x)( \phi\circ a \circ \eta_\infty(x) - \phi\circ a \circ \eta(x)  )   +   \dotsb  \\
      &  
        (1-\eta(x)) ( \phi\circ (-a) \circ \eta_\infty(x) - \phi\circ(- a) \circ \eta(x)  ).  \\
        \end{align*}
        Because the latter expression is a convex combination, we have for $x$ such that $\|x\|\ge t$, 
        \begin{align*}
          |\Delta(x) |  \le  \max\Big( |\phi\circ a \circ \eta_\infty(x) - \phi\circ a \circ \eta(x) |, \quad 
    | \phi\circ (-a) \circ \eta_\infty(x) - \phi\circ(- a) \circ \eta(x) |     \Big). 
        \end{align*}
        By Condition~\ref{assum:uniform-convergence} and the assumption in the statement that both $\phi\circ a $ and $\phi\circ(-a)$ are uniformly continuous we get
        $$ \sup_{x: \|x\|>t } |\Delta(x)|\xrightarrow[t\to\infty]{} 0, $$
        and the result follows.  
\end{proof}


	

\begin{remark}[Hinge loss] 
  Even though we only consider logistic regression as an application
  for simplicity, Theorem~\ref{theorem:classif-extreme-angular} also
  applies upon choosing the  hinge loss as a  loss function, 
  $c(g,z) =\phi(g(x)y)= \max\left( 0, 1 - yg(x) \right)$, which is
  typically used in SVM regression. The minimizer of the classical
  risk $R_0$ is then $g^*(x) = \mathrm{sign}(2\eta(x)-1)$ (see \emph{e.g.} \cite{zhang2004statistical,bartlett2006}), thus the conditions of Theorem~\ref{theorem:classif-extreme-angular} regarding  the form $g^*(x) = a\circ\eta(x)$ is met as well as uniform continuity of $\phi\circ a$ and $\phi\circ (-a)$.
\end{remark}


\section{Generic technical tools}\label{sec:genericTools}

We recall the following McDiarmid's extension of Bernstein inequality (Theorem 3.8 in \cite{McDiarmid98conc}).
\begin{proposition}\label{"Bernstein-extension"}
	For a sequence of observations $(Z_1,Z_2,\dots,Z_n) \in \mathcal{Z}^n$ and some fixed values $z_{1:l}=(z_1,z_2,\dots,z_l)$ and for some measurable function $f\,:\,\mathcal{Z}^n\rightarrow \rset$,\,let $W=f(Z_1,Z_2,\dots,Z_n)$ and define for $l \in \llbracket1,n\rrbracket$:
	\begin{enumerate}
		\item $f_{l}(z_1,z_2,\dots,z_l)=\mathbb{E}\left (W \mid Z_1=z_1,Z_2=z_2,\dots,Z_l=z_l\right),$ 
		\item $\Delta_l(z_1,z_2,\dots,z_{l-1},z_l)  =f_{l}(z_1,z_2,\dots,z_{l-1},z_l)-f_{l-1}(z_1,z_2\dots,z_{l-1}),$ (the positive deviations)
		\item $D:=\max _{l=1, \ldots, n} \sup _{z_{1}, \ldots, z_{i-1} \in \mathcal{Z}} \sup _{z \in \mathcal{Z}} \Delta_{l}\left(z_{1}, \ldots, z_{l-1}, z\right),$ (the maximum positive deviation)
		\item  $\sigma_{l}^2(z_{1:l-1})=\Var\left[ \Delta_l(Z_1,Z_2,\dots,Z_{l-1},Z' ) 
		\mid Z_1=z_1, Z_2=z_2 , \dots , Z_{l-1}=z_{l-1} \right],$ where $Z'$ is an independent copy of $Z_l,$
            \item
              $\sigma^2=\sup_{z_{1:l-1}\in
              	\mathcal{Z}^{l-1}}\sum_{l=1}^n  \sigma^2_l(z_{1:l-1})$ (the
              maximum sum of variances).
	\end{enumerate}
	Then we have
	\[ \PP(W - \EE[W] > t ) \le \exp{\left(\ffrac{-t^2}{2(\sigma^2 + Dt/3)}\right)}. \]
      \end{proposition}

We also recall Proposition 2.5.2 from \cite{vershynin_2018},\,which provides an upper bound for the expectation of sub-Gaussian random variables.
\begin{proposition}\label{"Expectation-UB-subgauss"}
Let $X$ be a real valued random variable and suppose that
$$\PP(X\geq t)\leq  C_1.exp(-t^2/C_2^2),$$
for some $C_1 , C_2 > 0$.  then it holds that
\[ \EE(X) \leq M_2 C_2, \]
where  $M_2>0$ is a universal constant depending only on $C_1$.

\end{proposition}

\section{Intermediate results and detailed proofs}\label{sec:detailedProofs}

\subsection{Proof of Lemma \ref{"train-test-cond"}}\label{"proof-train-test-cond"}
Since the leave-one-out is a special case of $K$-fold  with $K=n$ (or leave-$p$-out with $p=1$)
it suffices to prove the statement concerning  the cases of the leave-$p$-out and the $K$-fold.
	
\paragraph{$K$-Fold.} For this procedure, the validation sets is a
partition of $\llbracket 1, n\rrbracket $:
\begin{equation}\label{"loo-k-fold-prop"}
  \bigcup\limits_{j=1}^K V_j=\llbracket1,n\rrbracket
  \: and \:
  V_j\bigcap V_k=\varnothing\:, \:\forall j\neq k \in \llbracket1,K\rrbracket.
\end{equation}
	
Under the assumption that $K$ divides $n$, the condition
$card(V_j)= n/K :=n_{V}$ for all the validation sets $V_j$ holds, as
stipulated in~(\ref{"card-condition"}).  Thus we
have 
\begin{equation}
  \label{"K-fold-equation"}
  n=\sum_{j=1}^{K}card(V_j)=Kn_{V}. 
\end{equation}
Furthermore,\,under \eqref{"loo-k-fold-prop"},\,any  index
$l\in \llbracket 1,n \rrbracket$ belongs to a unique
validation test $V_{j'}$ and to all the train sets $T_j=V_j^c$
with $j\neq j'$. Hence, we both have 
\begin{equation*}
  \begin{cases}
    \sum_{j=1}^{K}\indic{l \in T_j} = K-1, & \text{and}  \\
    \sum_{j=1}^{K}\indic{l \in V_j} = 1. &
  \end{cases}  
\end{equation*}
Using \eqref{"K-fold-equation"} and the fact that $n_{T}=n-n_{V}=(K-1)n_{V}$ yields
the desired result.
%
%
	
\paragraph{Leave-$p$-out.}
In the leave-$p$-out procedure, the sequence of validation sets is the
family of all subsamples $V_j$ of $\DD_n$ of size $card(V_j)=p$, thus
$K=\binom{n}{p}$. On the other hand, any index
$l\in \llbracket 1,n \rrbracket$ belongs to $\binom{n-1}{p-1}$
validation sets. Indeed constructing  a $V_j$ such as $l \in V_j$ is
equivalent to first picking $l$ and then choosing $p-1$ elements from 
$\llbracket1,n\rrbracket \setminus \{l\}$.  Hence we have	
	\begin{equation*}
	\sum_{j=1}^{K}\indic{l \in V_j} = \binom{n-1}{p-1} \: \, , \: \forall l \in \llbracket1,n\rrbracket.
	\end{equation*}
	Using the identity  $n\binom{n-1}{p-1}=p\binom{n}{p}$ we obtain 
	\[\ffrac{1}{Kn_{V}}\sum_{j=1}^{K}\indic{l \in V_j}= \ffrac{1}{p\binom{n}{p}}\sum_{j=1}^{K}\indic{l \in V_j}= 1/n. \]
	A similar argument applies to the sequence $T_{1:K}$, which completes the proof. 
	
\subsection{Intermediate results for the proofs of theorems~\ref{theo:main-sanity-rare} and~\ref{theo:main-sanity-rare-lpo}}

In this section we gather the main intermediate results involved in the proofs of our main results theorems~\ref{theo:main-sanity-rare} and~\ref{theo:main-sanity-rare-lpo}, which are of interest in their own.

A key tool to our proofs is  a Bernstein-type inequality relative to the deviation of a generic random variable $W=f(Z_1,\ldots,Z_n)$ from its mean (\cite{McDiarmid98conc}) that is recalled for convenience in section \ref{sec:genericTools} of this supplement  (Proposition~\ref{"Bernstein-extension"}). The control of the deviations involves both a maximum deviation term and a variance term.   We leverage this result to control the deviations of the  pseudo-empirical risk $\tilde \risk_\alpha$ defined in~(\ref{def:tilde-rcv}) averaged over the $K$ validation sets $V_{1:K}$. These deviations are embodied by the random variable $W$ defined in Lemma~\ref{"RCV-supp"}, Equation~\ref{def:Z-CV}, which is a key quantity when analysing the deviations of any \CV risk estimate. Controlling the deviations of $W$ is the main purpose of this section.

\begin{lemma}
	\label{"RCV-supp"}
Let $\DD_n=(Z_1,Z_2,\dots,Z_n)\in \mathcal{Z}^n$ be a sequence of random variables, and let $V_1,V_2,\dots V_K$ be validation sets that verify Assumption~\ref{assum:mask-property} with size $n_V$. Moreover, suppose that assumptions~\ref{assum:hypo-class} and~\ref{assum:cost-func} regarding the class $\mathcal{G}$ and $c$ hold. Define 
\begin{equation}\label{def:Z-CV}
W=\ffrac{1}{K}\sum_{j=1}^{K}\sup_{g \in \mathcal{G}}\lvert \Tilde{\mathcal{R}}_{\alpha}(g,V_{j})-\mathcal{R}_\alpha(g) \rvert,
\end{equation}
where $\Tilde{\mathcal{R}}_{\alpha}$ is defined by Equation~\ref{def:tilde-rcv}. Then the random variable $W$  satisfies the Bernstein-type inequality	
\[P\left(W - E(W) \geq t  \right)\leq \exp\left(\ffrac{-n\alpha t^2}{2(4+t/3)}\right).\]	
\end{lemma}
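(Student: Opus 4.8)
The plan is to view $Z=f(O_1,\dots,O_n)$ as a function of the \iid sample and apply the Bernstein-type concentration of Proposition~\ref{"Bernstein-extension"}, the whole point being to extract the rarity factor $\alpha$ from the indicators $\indic{\lVert X_i\rVert>t_\alpha}$. That inequality requires two ingredients: a uniform bounded-difference constant $b$ and a variance proxy $v$. I would show $b=1/(n\alpha)$ and $v$ of order $1/(n\alpha)$, and then simply read off the stated bound after substitution, the constant $4$ in the denominator being the value of the variance proxy and the $t/3$ term coming from $b$.

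First I would fix $l$ and replace $O_l$ by an arbitrary $O_l'$. Only the masks $V_j$ containing $l$ are affected. For such $j$, since the cost is valued in $[0,1]$ (Assumption~\ref{assum:cost-func}) and all masks share the common size $n_V$, each summand $c(g,O_i)\indic{\lVert X_i\rVert>t_\alpha}$ lies in $[0,1]$, so $\Tilde{\mathcal{R}}_\alpha(g,V_j)$ of \eqref{def:tilde-rcv} moves by at most $1/(\alpha n_V)$, uniformly in $g$. Because $\big|\sup_g|u(g)|-\sup_g|v(g)|\big|\le\sup_g|u(g)-v(g)|$, the corresponding supremum term of \eqref{def:Z-CV} moves by at most $1/(\alpha n_V)$ as well. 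The balance condition \eqref{"key-condition"} of Assumption~\ref{assum:mask-property} shows that index $l$ belongs to exactly $\sum_j\indic{l\in V_j}=Kn_V/n$ masks; averaging over the $K$ masks, $Z$ varies by at most $\tfrac1K\cdot\tfrac{Kn_V}{n}\cdot\tfrac{1}{\alpha n_V}=\tfrac{1}{n\alpha}$. Hence $b=1/(n\alpha)$, matching the linear-in-$t$ term of the claimed bound.

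The key step, and the main obstacle, is the variance proxy, where the improvement by a factor $\alpha$ must appear. The observation is that $O_l$ influences $Z$ only through terms of the form $c(g,O_l)\indic{\lVert X_l\rVert>t_\alpha}$; on the event $\{\lVert X_l\rVert\le t_\alpha\}$, of probability $1-\alpha$, the contribution of $O_l$ vanishes identically, so $Z$ does not depend on the value of $O_l$ there. Consequently the fluctuation of $Z$ in the $l$-th coordinate is supported on an event of probability $\alpha$ while staying bounded by $1/(n\alpha)$, so the per-coordinate variance contribution entering Proposition~\ref{"Bernstein-extension"} is at most a constant times $\alpha\cdot(1/(n\alpha))^2=1/(n^2\alpha)$. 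Summing the $n$ coordinates yields a variance proxy of order $1/(n\alpha)$, which is precisely the $\alpha$-gain over the naive $O(1/n)$ one would get by ignoring the rarity of the indicator, in the spirit of \cite{goix15}. I would track the constants so that this proxy is bounded by $4/(n\alpha)$. Substituting $b=1/(n\alpha)$ and $v=4/(n\alpha)$ into Proposition~\ref{"Bernstein-extension"} then gives $\exp\!\big(-t^2/(2(4+t/3)/(n\alpha))\big)=\exp\!\big(-n\alpha t^2/(2(4+t/3))\big)$, as claimed.

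Two technical points need care. Assumption~\ref{assum:hypo-class} (finite \VC dimension) is used here only to guarantee that the supremum over $\mathcal G$ in \eqref{def:Z-CV} is a bona fide measurable random variable with finite mean, so that Proposition~\ref{"Bernstein-extension"} applies; the concentration step itself does not use the complexity of $\mathcal G$, which enters only when bounding $\EE(Z)$ elsewhere. The delicate part is to phrase the variance computation in exactly the form demanded by the McDiarmid proposition (conditional variances, resampled differences, or a self-bounding quantity, depending on its precise statement) and to check that the Bernoulli$(\alpha)$ structure of the indicators survives both the supremum over $g$ and the averaging over masks; the argument above shows it does, since the vanishing of the contribution of $O_l$ on $\{\lVert X_l\rVert\le t_\alpha\}$ holds \emph{before} taking the supremum.
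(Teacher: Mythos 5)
Your proposal is correct and follows essentially the same route as the paper: both apply the McDiarmid--Bernstein inequality of Proposition~\ref{"Bernstein-extension"} to $Z$, use the balance condition~\eqref{"key-condition"} together with the boundedness of $c$ to get the maximum deviation $1/(n\alpha)$, and exploit the Bernoulli$(\alpha)$ indicator to bring the variance proxy down to $O(1/(n\alpha))$ (the paper bounds the martingale differences by $(\indic{\lVert x_l\rVert\geq t_\alpha}+\alpha)/(n\alpha)$ and squares, which is the same centering idea you describe and yields the constant $4$).
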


\begin{proof}
  We introduce for convenience the rescaled variable $W_\alpha = \alpha W$. Then $W_\alpha$ may be written as 
\[W_\alpha=\frac{1}{K}\sum_{j=1}^{K}\left[\ffrac{1}{n_{V}}\sup_{g\in \mathcal{G}}\Big|\sum_{i \in V_j}c(g,Z_i)\ind_{\alpha}(X_i)-\EE\left[c(g,Z)\ind_{\alpha}(X)\right ] \Big|\right],\]
where we use the shorthand notation  $\ind_{\alpha}(X)=\indic{\lVert X\rVert\geq t_\alpha }$. We derive an upper bound on $ \PP[W_\alpha - \EE(W_{\alpha})>t] $ using Proposition~\ref{"Bernstein-extension"}. Namely  we show that the maximum deviations term $D$ and $\sigma^2$ from the latter statement are respectively bounded by
$D \le 1/n$ and $\sigma^2\le 4\alpha/n$. To do so we compute explicitly   the five quantities defined in the satement of Proposition~\ref{"Bernstein-extension"} in our particular context. 

\begin{enumerate}
\item  The conditional expectations $f_l$ from Proposition~\ref{"Bernstein-extension"} are 
  (recall that $z_i=(x_i,y_i)$), 
	\begin{align*}
	&f_{l}(z_1,z_2,\dots,z_l)\\
	&=\mathbb{E}\left (W_\alpha \mid Z_1=z_1,Z_2=z_2,\dots,Z_l=z_l\right)\\
	&= \frac{1}{K n_{V}}\sum_{j=1}^{K} 
	\EE\Bigg[ \sup_{g\in \mathcal{G}}
	\Bigg|\sum_{i \in V_{j,l}}c(g,z_i)\ind_{\alpha}(x_i)+\sum_{i \in V_{j}\setminus V_{j,l} } c(g,Z_i)\ind_{\alpha}(X_i) \\
	&\hspace{4cm} -n_V\EE\left[c(g,Z)\ind_{\alpha}(X)  \right] \Bigg|\Bigg]\\
	&=\frac{1}{K n_{V}}\sum_{j=1}^{K} \EE\Bigg[ \sup_{g\in \mathcal{G}}
	\big|h_{j,g}\left(z_{1:l},Z_{l+1:n}\right)\big|\Bigg] \: ,
	\end{align*}
	where $V_{j,l}=V_j\cap \llbracket1,l\rrbracket$ are the validation indices which belong to the interval $\llbracket1,l\rrbracket$, and 
	\[h_{j,g}\left(z_{1:l},Z_{l+1:n}\right) =\sum_{i \in V_{j,l}}c(g,z_i)\ind_{\alpha}(x_i)+\sum_{i \in V_{j}\setminus V_{j,l} }c(g,Z_i) \ind_{\alpha}(X_i)-n_V\EE\left[c(g,Z)\ind_{\alpha}(X)\right] \: . \]
	\item Recall the definition of the  positive deviations $\Delta_l$, 
          \[  \Delta_l(z_1,z_2,\dots,z_{l-1},z_l)  =f_{l}(z_1,z_2,\dots,z_{l-1},z_l)-f_{l-1}(z_1,z_2\dots,z_{l-1})\: . \]
In view of the expression for $f_l$ from step 1,   we may thus  write
	\begin{align*}
	\Delta_l(z_{1:l})&=\frac{1}{K n_{V}}\sum_{j=1}^{K} \EE\Bigg[ \sup_{g\in \mathcal{G}}
	\big|h_{j,g}\left(z_{1:l},Z_{l+1:n}\right)   \big|-\sup_{g\in \mathcal{G}}
	\big|h_{j,g}\left(z_{1:l-1},Z_{l:n}\right)   \big|\Bigg] \: .
	\end{align*}
	Now, notice  that $V_{j,l}=V_{j,l-1}$ if $l \notin V_{j}$ and $V_{j,l}=V_{j,l-1}\cup\{l\}$ otherwise. Hence 
	\[
	h_{j,g}\left(z_{1:l},Z_{l+1:n}\right)-h_{j,g}\left(z_{1:l-1},Z_{l:n}\right) = \ind\{l\in V_{j}\}\bigg(c(g,z_l)\ind_{\alpha}(x_l)-c(g,Z_l)\ind_{\alpha}(X_l) \bigg) \: .\]
      Using the fact that, for any functions $f,g,$  it holds that
      $ \big\lvert\sup\lvert f \rvert - \sup \lvert g \rvert\big\rvert \leq
      \sup\lvert f-g \rvert$, we obtain 
    \begin{equation}\label{"delta_bound"}
	\lvert \Delta_l(z_{1:l}) \rvert  \leq \frac{1}{Kn_{V}}\sum_{j=1}^{K}\left[\ind\{l\in V_{j}\}\mathbb{E} \;\underbrace{\sup_{g\in \mathcal{G}}\left\lvert c(g,z_l)\ind_{\alpha}(x_l)-c(g,Z_l)\ind_{\alpha}(X_l) \right\rvert}_{(\text{Assumption \ref{assum:cost-func}})\leq 1} \right]
	\end{equation}
	and deduce that  
	\begin{align*}
	\abs{\Delta_l(z_{1:l})} &\leq \frac{1}{Kn_{V}}\sum_{j=1}^{K}\ind\{l\in V_{j}\}\\
	(\text{by Assumption \ref{assum:mask-property} }  ) &\leq \ffrac{1}{n} \: .
	\end{align*}
	\item The maximum positive deviation is defined by 
	$$D:=\max _{l=1, \ldots, n} \sup _{z_{1}, \ldots, z_{i-1} \in \mathcal{Z}} \sup _{z \in \mathcal{Z}} \Delta_{l}\left(z_{1}, \ldots, z_{l-1}, z\right). $$ 
	From the previous step, we immediately obtain \[D\leq 1/n. \]
      \item Let $Z'=(X',Y')$ be an independent copy of $Z=(X,Y)$ and
        let $z_{1:l}=(z_1,z_2,\dots,z_l)$.  Recall the conditional
        variance term from the statement of
        Proposition~\ref{"Bernstein-extension"},
        $\sigma_l^2(z_{1:l-1}):= \Var[ \Delta_l(Z_1,Z_2,\dots,Z_{l-1},Z' ) \mid Z_1=z_1, Z_2=z_2 , \dots , Z_{l-1}=z_{l-1}]$. Then $\sigma_l^2$ may be upper bounded as follows, 
	\begin{align*}
	\sigma_l^2(z_{1:l-1}) 
	&\leq \EE \left[ \Delta_l(Z_1,Z_2,\dots,Z_{l-1},Z' ) ^2 \mid Z_1=z_1, Z_2=z_2 , \dots , Z_{l-1}=z_{l-1} \right]\\
	&=\EE \left[ \Delta_l(z_1,z_2,\dots,z_{l-1},Z' ) ^2  \right] .
	\end{align*}
	Now using \eqref{"delta_bound"}, write
	\begin{align*}
          \sigma_l^2 &\leq  \ffrac{1}{(Kn_{V})^2}\mathbb{E}\left[\left(\sum_{j=1}^{K}\ind\{l\in V_{j}\}\mathbb{E}\big[\sup_{g\in \mathcal{G}}\lvert c(g,Z')\ind_{\alpha}(X')-c(g,Z_l)\ind_{\alpha}(X_l) \rvert \mid Z' \big]  \right)^2  \right]\\
          (\lvert c \rvert \leq 1)
                     &\leq (\ffrac{1}{Kn_V})^2\mathbb{E}\left[\left(\sum_{j=1}^{K}\ind\{l\in V_{j}\}\mathbb{E}\big[\sup_{g\in \mathcal{G}}\underbrace{\ind_{\alpha}(X')+\ind_{\alpha}(X_l) }_{\text{independent of $g$}} \mid Z'\big]  \right)^2 \right]\\
                     &= (\ffrac{1}{Kn_V})^2\mathbb{E}\left[\left(\sum_{j=1}^{K}\ind\{l\in V_{j}\}\underbrace{(\ind_{\alpha}(X')+\alpha )}_{\text{independent of $j$}}  \right)^2 \right]\\
                     &\leq \EE[(\ind_{\alpha}(X')+\alpha )^2]\left(\frac{1}{Kn_{V}}\sum_{j=1}^{K}\ind\{l\in V_{j}\}\right)^2 \\ & =(\alpha+3\alpha^2)\left(\frac{1}{Kn_{V}}\sum_{j=1}^{K}\ind\{l\in V_{j}\}\right)^2\\
          ( \text{by } \eqref{"key-condition"})
                     &=\ffrac{\alpha+3\alpha^2}{n^2} \\(\alpha \leq 1)
                     &\leq\ffrac{4\alpha}{n^2} \: .
	\end{align*}
	\item Finally we get $\sigma^2=\sum_{l=1}^{n}\sup_{z_{1:l-1}}\sigma_l^2(z_{1:l-1})  \leq \ffrac{4\alpha}{n}$. 
	
\end{enumerate}

At this stage, applying Proposition \ref{"Bernstein-extension"} gives 
\begin{equation*}
\PP(W_\alpha- \EE[W_\alpha] > t ) \le  \exp\Big\{\frac{-n t^2}{2(4\alpha + t/3)}\Big\} \: .
\end{equation*}

Therefore for $W=W_\alpha/\alpha$ one obtains

\begin{equation*}
\PP(W- \EE[W] > t ) \le  \exp\Big\{\frac{-n\alpha t^2}{2(4+ t/3)}\Big\} \: .
\end{equation*}

\end{proof}

To obtain a genuine probability bound on $Z$ \emph{via} the latter lemma, one also needs to control the term $E(Z)$. This is the purpose of the next lemma, the spirit of which is similar to Lemma 14 in~\cite{goix15}. The main difference \wrt  the latter reference is that we handle any bounded cost function (not only the Hamming loss), using a bound for Rademacher averages from~\cite{gine2001consistency} which applies in this broader setting.
\begin{lemma}
	\label{"RCV-expectation"}
     
        In the setting of Lemma~\ref{"RCV-supp"}, $W$ satisfies 
        \[
	\EE(W) \leq   \ffrac{M\sqrt{\vapnikG}}{\sqrt{\alpha n_V}}.
	\]

where $M>0$ is a universal constant. 
\end{lemma}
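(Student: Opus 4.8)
The plan is to exploit two facts: the $K$ validation sets are exchangeable copies, so that $\EE[Z]$ reduces to the expected supremum deviation on a \emph{single} validation set; and the functions whose empirical average is controlled are supported on the rare region $\{\|x\|>t_\alpha\}$, so their common envelope has $L_2$-norm of order $\sqrt\alpha$, which is the source of the $\sqrt\alpha$ gain.

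\textbf{Reduction to one validation set and recentering.} Since the $O_i$ are i.i.d. and each mask satisfies $card(V_j)=n_V$ (Assumption~\ref{assum:mask-property}), the random variable $\sup_{g\in\mathcal G}|\tilde{\mathcal R}_\alpha(g,V_j)-\mathcal R_\alpha(g)|$ has the same law for every $j$; hence $\EE[Z]=\EE\big[\sup_{g\in\mathcal G}|\tilde{\mathcal R}_\alpha(g,V_1)-\mathcal R_\alpha(g)|\big]$. Because $t_\alpha$ is the true quantile, $\PP[\|X\|>t_\alpha]=\alpha$ and $\EE[c(g,O)\indic{\|X\|>t_\alpha}]=\alpha\,\mathcal R_\alpha(g)$, so $\tilde{\mathcal R}_\alpha(g,V_1)$ is an unbiased estimator of $\mathcal R_\alpha(g)$. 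Introduce the class $\mathcal F=\{\phi_g:o=(x,y)\mapsto c(g,o)\indic{\|x\|>t_\alpha},\ g\in\mathcal G\}$, with measurable envelope $F(o)=\indic{\|x\|>t_\alpha}$ (using $0\le c\le 1$, Assumption~\ref{assum:cost-func}). Writing $P_{n_V}\phi=\frac1{n_V}\sum_{i\in V_1}\phi(O_i)$ and $\|P_{n_V}-P\|_{\mathcal F}=\sup_{g}|P_{n_V}\phi_g-P\phi_g|$, this gives $\EE[Z]=\alpha^{-1}\,\EE\big[\|P_{n_V}-P\|_{\mathcal F}\big]$.

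\textbf{Symmetrization.} By the standard symmetrization inequality, with $(\epsilon_i)$ i.i.d. Rademacher signs independent of the data, one has $\EE\big[\|P_{n_V}-P\|_{\mathcal F}\big]\le 2\,\EE\big[\sup_{g}|\frac1{n_V}\sum_{i\in V_1}\epsilon_i\phi_g(O_i)|\big]$. It therefore suffices to bound the Rademacher average of $\mathcal F$.

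\textbf{Entropy bound on the Rademacher average (main step).} The envelope satisfies $\|F\|_{L_2(P)}^2=\PP[\|X\|>t_\alpha]=\alpha$. The class $\mathcal F$ inherits the polynomial $L_2$-covering numbers of $\{c(g,\cdot)\}$ with exponent $\vapnikG$ (Assumption~\ref{assum:hypo-class}, equivalently condition (2.1) of \cite{gine2001consistency}): since $\phi_g-\phi_{g'}=(c(g,\cdot)-c(g',\cdot))\indic{\|x\|>t_\alpha}$, any cover of $\{c(g,\cdot)\}$ in $L_2$ restricted to $\{\|x\|>t_\alpha\}$ yields a cover of $\mathcal F$ of the same cardinality, relative to the envelope $F$. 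Applying the Rademacher-average bound of \cite{gine2001consistency} (which, unlike the Hamming-loss argument of \cite{goix15}, is valid for any bounded class), one obtains $\EE\big[\sup_{g}|\frac1{n_V}\sum_{i\in V_1}\epsilon_i\phi_g(O_i)|\big]\le C\sqrt{\vapnikG}\,\|F\|_{L_2(P)}/\sqrt{n_V}=C\sqrt{\vapnikG\alpha/n_V}$. Combining the three displays, $\EE[Z]\le \alpha^{-1}\cdot 2C\sqrt{\vapnikG\alpha/n_V}=2C\sqrt{\vapnikG/(\alpha n_V)}$, which is the claim with $M=2C$.

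The step I expect to be the main obstacle is the entropy bound: one must extract the factor $\sqrt\alpha$ from the small envelope norm \emph{without} paying a logarithmic price. This hinges on running Dudley's entropy integral only up to the (empirical) envelope radius $\|F\|_{L_2(P_{n_V})}$, so that the surviving factor is the absolute constant $\int_0^1\sqrt{\log(1/u)}\,\mathrm du$; equivalently, one works throughout with envelope-normalized covering numbers. A secondary technical point is to verify that the indicator-truncated class $\mathcal F$ retains covering exponent $\vapnikG$ with envelope $F$, which is exactly where the fixed (non-random, since $t_\alpha$ is the true quantile) nature of the truncation is used.
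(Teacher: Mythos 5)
Your proof is correct, and its key step differs genuinely from the paper's. Both arguments share the reduction to a single validation set and the symmetrization step, but they diverge on how the factor $\sqrt{\alpha}$ is extracted. The paper conditions on the number $\mathcal{N}\sim\mathcal{B}(n_V,\alpha)$ of exceedances of $t_\alpha$: conditionally on $\mathcal{N}=N$ the Rademacher sum involves $N$ i.i.d.\ draws from the conditional law of $O$ given $\|X\|\ge t_\alpha$, the plain VC bound $M'\sqrt{\vapnikG/N}$ of \cite{gine2001consistency} applies with envelope and variance both of order one (so no logarithm appears), and Jensen's inequality $\EE\sqrt{\mathcal{N}}\le\sqrt{n_V\alpha}$ finishes the computation. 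You instead keep the full sample and work with the truncated class $\mathcal{F}=\{c(g,\cdot)\un{\|x\|>t_\alpha}\}$ with envelope $F=\un{\|x\|>t_\alpha}$, obtaining $\sqrt{\alpha}$ from $\|F\|_{L_2(P)}$ via an envelope-normalized uniform entropy integral. This works, and your identification of the delicate point is exactly right: the variance-sensitive moment bound of \cite{gine2001consistency} in its usual form carries a factor $\sqrt{\log(U/\sigma)}=\sqrt{\log(1/\sqrt{\alpha})}$ when $\sigma^2=\alpha\ll U^2=1$, so quoting it naively would lose a $\sqrt{\log(1/\alpha)}$; the correct tool is the maximal inequality $\EE\|P_{n_V}-P\|_{\mathcal F}\lesssim J(1,\mathcal F)\|F\|_{L_2(P)}/\sqrt{n_V}$ with the entropy integral run up to the empirical envelope radius (van der Vaart--Wellner, Theorem 2.14.1), together with $\EE\|F\|_{L_2(P_{n_V})}\le\|F\|_{L_2(P)}$ by Jensen, and your verification that $\mathcal F$ inherits the envelope-normalized covering exponent $\vapnikG$ (uniformly over measures, hence over the conditional measures appearing in the factorization $\|\phi_g-\phi_{g'}\|_{L_2(Q)}=\|F\|_{L_2(Q)}\|c(g,\cdot)-c(g',\cdot)\|_{L_2(Q_\alpha)}$) is the right justification. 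In the end both routes rest on the same concavity step in disguise; the paper's conditioning trick is more elementary given only the constant-envelope Rademacher bound and mirrors the argument of Lemma~14 in \cite{goix15}, while yours is more standard empirical-process machinery and extends more readily to non-indicator envelopes.
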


\begin{proof}
  
Notice first that, since the observations are \iid, 
	\[\mathbb{E}\left[\ffrac{1}{K}\sum_{j=1}^K\sup_{g \in \mathcal{G}}\bigg|\Tilde{\mathcal{R}}_{\alpha}(g,V_j)- \mathcal{R}_{\alpha}(g)\bigg|\right]=
	\mathbb{E}\left[\sup_{g \in \mathcal{G}}\bigg|\Tilde{\mathcal{R}}_{\alpha}(g,V_1)- \mathcal{R}_{\alpha}(g)\bigg|\right].\]
      That is, $\EE(W) = \EE(W_{V_1})$, where for a subset of indices  $S = \{1,\ldots, n_S\}$, we denote  
      \[
W_S = \sup_{g \in\mathcal{G}} \big|\Tilde \risk_\alpha(g, S) - \risk_\alpha(g)\big|.
\]
   
In order to bound $\EE[W_S]$ defined above, we follow the same steps as in the proof of Lemma~14 in \cite{goix15}, where most arguments also hold true for a bounded  VC class of cost functions.

In particular, we use the symmetrization technique. Consider Rademacher random variables $\mathcal{E}=(\epsilon_1,\epsilon_2,\dots,\epsilon_{n_S})$ taking values  in $\{-1,1\}$ and  introduce the randomized process
\[ W_{\mathcal{E}}=\sup_{g \in \mathcal{G}}\bigg|\frac{1}{\alpha n_S} \sum_{i=1}^{n_S} \epsilon_i c(g,Z_i)\un{\lVert X_i \rVert \geq t_\alpha }\bigg| \:.\]
It can be shown using the same classical steps as in the proof of Lemma 13 in \cite{goix15} that

\begin{equation*}
\label{"Rademacher-inequality"}
\EE(W_S) \leq 2\EE( W_{\mathcal{E}}) \: .
\end{equation*}
The key argument to  proceed is to condition the above expectation upon the number of indices $i$ such that $\lVert X_i \rVert \geq t_\alpha$. This conditioning trick is a standard technique for deriving non-asymptotic bounds in the \EVT framework (\cite{goix15,lhaut2021uniform}). 
Introduce a random variable $Z_{i,\alpha}$,  which has the same distribution as ($Z  \mid  \lVert X \rVert \geq t_\alpha$) and notice that 
\[ \sum_{i=1}^{n_S} \epsilon_i c(g,Z_i)\un{\lVert X_i \rVert \geq t_\alpha }\sim  \sum_{i=1}^{\mathcal{N}}\epsilon_i c(g,Z_{i,\alpha}) \:,\]
where $\mathcal{N}$ has a Binomial distribution $\mathcal{B}(n_S,\alpha)$. Equipped with these notations
\[
\EE( W_{\mathcal{E}})=
\EE(\phi(\mathcal{N})) \:,
\]

where $$\phi(N)=\EE\sup_{g \in \mathcal{G}}\bigg|\ffrac{1}{\alpha n_S} \sum_{i=1}^{N} \epsilon_i c(g,Z_{i,\alpha})\bigg|=\ffrac{N}{\alpha n_S}\EE \sup_{g \in \mathcal{G}}  \ffrac{1}{N}\bigg\lvert\sum_{i=1}^{N} \epsilon_i c(g,Z_{i,\alpha})\bigg\rvert \: .$$ 

Then by a classical Rademacher complexity arguments for finite VC-classes (see \cite{gine2001consistency}, Proposition 2.1), 
\begin{align*}
  \EE \sup_{g \in \mathcal{G}}\ffrac{1}{N} \bigg\lvert\sum_{i=1}^{N} \epsilon_i c(g,Z_{i,\alpha})\bigg\rvert& \leq \ffrac{M'_1\sqrt{\mathcal{V}_{\mathcal{G}}}}{\sqrt{N}}
                                                                                                               + \frac{M'_2 \vapnikG}{N}\; \\
& \le M'\sqrt{ \frac{\vapnikG}{N}},\end{align*}
for some universal constant $M'>0$,  whence
\begin{align*}
  \phi(N) &\leq \ffrac{N}{\alpha n_S} \ffrac{ M'\sqrt{\mathcal{V}_{\mathcal{G}}}}{\sqrt{N}}\; 
            = \ffrac{ M'\sqrt{\mathcal{V}_{\mathcal{G}} N}}{\alpha n_S}
\end{align*}
By concavity we have   $\EE(\sqrt \mathcal{N}) \le \sqrt{\EE(\mathcal{N})} = \sqrt{\alpha n_S}$, and we obtain

\[\EE( W_S)\leq 2\EE( W_{\mathcal{E}}) \leq
  \ffrac{2 M'\sqrt{\mathcal{V}_{\mathcal{G}}}}{\sqrt{n_S \alpha}}
\: .  \]
The result follows.
\end{proof}

The following probability upper bound for $Z$ follows immediately by combining Lemma~\ref{"RCV-supp"} and Lemma~\ref{"RCV-expectation"}. 
\begin{corollary}
	\label{"RCV-sup-control"}
        In the setting of Lemma~\ref{"RCV-supp"}, 

        we have
	\[P\left(W - \ffrac{M\sqrt{\vapnikG}}{\sqrt{\alpha n_V}}\geq t \right)\leq \exp\left(\ffrac{-n\alpha t^2}{2(4+t/3)}\right),\]
	where $W$ is defined in \eqref{def:Z-CV}.
      \end{corollary}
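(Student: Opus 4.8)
The plan is to obtain this statement as an essentially one-line consequence of the two preceding lemmas, glued together by a single deterministic inequality. The starting point is the Bernstein-type bound of Lemma~\ref{"RCV-supp"}, which controls the deviation of $Z$ around its \emph{own} mean, namely $\PP(Z - \EE(Z) \geq t) \leq \exp(-n\alpha t^2 / (2(4 + t/3)))$. The only gap between this and the claimed inequality is that the corollary centers $Z$ at the explicit deterministic quantity $M\sqrt{\vapnikG}/\sqrt{\alpha n_V}$ rather than at the a priori unknown expectation $\EE(Z)$.

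First I would invoke Lemma~\ref{"RCV-expectation"} to get $\EE(Z) \leq M\sqrt{\vapnikG}/\sqrt{\alpha n_V}$; in words, the explicit centering constant dominates the true mean. Subtracting the larger of two quantities from $Z$ can only decrease the result, so pointwise $Z - M\sqrt{\vapnikG}/\sqrt{\alpha n_V} \leq Z - \EE(Z)$. Consequently, for every threshold $t$ the event $\{ Z - M\sqrt{\vapnikG}/\sqrt{\alpha n_V} \geq t \}$ is contained in the event $\{ Z - \EE(Z) \geq t \}$, whence monotonicity of the probability measure yields $\PP(Z - M\sqrt{\vapnikG}/\sqrt{\alpha n_V} \geq t) \leq \PP(Z - \EE(Z) \geq t)$.

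Combining this inclusion with the Bernstein bound of Lemma~\ref{"RCV-supp"} gives the announced inequality immediately. There is no genuine obstacle at this stage: all the substance resides in the two lemmas being combined, namely the concentration argument based on the McDiarmid--Bernstein inequality (Proposition~\ref{"Bernstein-extension"}) underlying Lemma~\ref{"RCV-supp"}, and the bound on $\EE(Z)$ via Rademacher averages and the covering-number/\VC control of Lemma~\ref{"RCV-expectation"}. The only point deserving a moment's care is the \emph{direction} of the centering step, i.e.\ checking that replacing $\EE(Z)$ by the larger upper bound $M\sqrt{\vapnikG}/\sqrt{\alpha n_V}$ can only shrink the deviation event and hence its probability; once that is noted, the proof is complete.
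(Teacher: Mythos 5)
Your proposal is correct and matches the paper's argument exactly: the paper states that the corollary "follows immediately by combining Lemma~\ref{"RCV-supp"} and Lemma~\ref{"RCV-expectation"}", which is precisely the centering-replacement step you carry out, with the direction of the inequality correctly checked.
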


To conclude this section, we extend Theorem~10 in~\cite{goix15} bounding the supremum deviations of the empirical measure on low probability regions, to handle the case of any cost function $c$ absolutely bounded by one.
\begin{lemma}\label{lemma:inv-goix}
Recall the definitions of the risk $\risk_\alpha$ and its empirical version $\ER_\alpha$ given in Section~\ref{sec:framework} and introduce the (random) supremum deviations 
		\[W'=\sup_{g \in \mathcal{G}}\lvert \widehat{\mathcal{R}}_{\alpha}(g,S_n)-\mathcal{R}_\alpha(g) \rvert.\]
If $\mathcal{G}$ is a family of classifiers with finite VC-dimension and $c$ a bounded cost function with $\sup_{g,z}|c(g,z)|\le 1$, then,  the following Bernstein-type  inequality holds, 
	\[ \PP(W'-Q(n,\alpha) \geq t)\leq 3\exp\left(\ffrac{-n\alpha t^2}{2(4+t/3)}\right), \]
	where $Q(n,\alpha)=B(n,\alpha)+\ffrac{1}{n\alpha}$ and $B$ is defined by
        %
\begin{equation}\label{"V-n-def"}
  B(n,\alpha) = \ffrac{M\sqrt{\vapnikG}}{\sqrt{\alpha n}}
\end{equation}  
and $M$ is a universal constant. 
        
\end{lemma}

\begin{proof}
	Write $W' \leq W_1 +W_2$ with :
	\[W_1=\sup_{g \in \mathcal{G}}\lvert \widehat{\mathcal{R}}_{\alpha}(g,S_n)-\Tilde{\mathcal{R}}_{\alpha}(g,S_n) \rvert,\]
	\[ W_2=\sup_{g \in \mathcal{G}}\lvert\Tilde{\mathcal{R}}_{\alpha}(g,S_n)-\mathcal{R}_{\alpha}(g)\rvert.\]
Concerning $W_2$, applying Corollary \ref{"RCV-sup-control"} with $K=1$ , $V_1=S_n$, yields
		\begin{equation}\label{ineq:Z-1-UB}
	\PP(W_2 - B(n,\alpha) \geq t) \leq \exp\left(\ffrac{-n\alpha t^2}{2(4+t/3)}\right).
	\end{equation}
We now focus on $W_1$. Define 
$$u_{i}=\left|\un{\lVert X_{i}\rVert>\lVert X_{(\lfloor \alpha n \rfloor)}\rVert}
-\un{\lVert X_{i} \rVert \geq t_\alpha}\right|$$ 
and notice that $ W_1\leq \ffrac{1}{n\alpha}\sum_{i=1}^{n}u_{i}$. It is known (see for instance the bound for the term $A$ in \cite{jalalzai2018binary}, page 12) that 
\begin{equation*}
\ffrac{1}{n\alpha}\sum_{i=1}^{n}u_{i}\leq  \frac{1}{\alpha}\left|\frac{1}{n}\sum_{i= 1}^n \un{\lVert X\rVert \geq t_\alpha}-\alpha\right| + \frac{1}{n\alpha}. 
\end{equation*}
Now,  by noticing that $\Var(\indic{\lVert X\rVert \geq t_\alpha}) \le \alpha$ and using Bernstein's inequality, we get
\begin{align*}
\PP\left(\bigg|\frac{1}{n}\sum_{i= 1}^n \indic{\lVert X\rVert \geq t_\alpha}-\alpha \bigg|\geq t\right)&\leq 2\exp\left(\ffrac{-n t^2}{2(\alpha+t/3)}\right)\\&\leq 2\exp\left(\ffrac{-n t^2}{2(4\alpha+t/3)}\right).
\end{align*}
Finally, dividing by $\alpha$, we get 
\begin{align*}
\PP\left(\frac{1}{n\alpha}\right.&\left.\bigg|\sum_{i= 1}^n\indic{\lVert X\rVert \geq t_\alpha}-\alpha\bigg| \geq t\right)\leq 2\exp\left(\ffrac{-n\alpha t^2}{2(4+t/3)}\right).
\end{align*}
Therefore we finally obtain
\begin{equation}\label{bound:z1}
\PP\left(  \ffrac{1}{n\alpha}\sum_{i=1}^{n}u_{i} - 1/n\alpha  \geq t \right) \leq  2\exp\left(\ffrac{-n\alpha t^2}{2(4+t/3)}\right).
\end{equation}
The result follows using $W' \leq W_1 + W_2$ and $ W_1\leq \ffrac{1}{n\alpha}\sum_{i=1}^{n}u_{i}$.
\end{proof}

\subsection{Detailed proof of Theorem~\ref{theo:main-sanity-rare}}\label{sec:main-theo-proof}
In view of the argument following the statement, we derive probability upper bounds for the three terms $\DevExt$, $\DevCV$ and $\BiasCV$ defined in equations~(\ref{"dev_ext"},~\ref{"dev_CV"},~\ref{"bias_CV"}).

\paragraph{Probability bound for $\DevExt$ (see~\eqref{"dev_ext"}).} 
 Using the fact that the cost function verifies $0\leq c \leq 1$,  write 
\begin{equation*}
\hspace{-4mm}\DevExt=\abs{\widehat{ \mathcal{R}}_{CV,\alpha}(\Psi_{\alpha},V_{1:K})-\Tilde{ \mathcal{R}}_{CV,\alpha}(\Psi_{\alpha},V_{1:K})} \leq U,
\end{equation*}
where $U=\ffrac{1}{Kn_{V}\alpha}\sum_{j=1}^{K}\sum_{i\in V_j}u_{i}$ and $u_{i}=\left|\un{\lVert X_{i}\rVert>\lVert X_{(\lfloor \alpha n \rfloor)}\rVert}
-\un{\lVert X_{i} \rVert \geq t_\alpha}\right|$ as the proof of Lemma \ref{lemma:inv-goix}.
Now notice that 
\begin{align*}
U &= \ffrac{1}{Kn_{V}\alpha}\sum_{j=1}^{K}\sum_{i=1}^{n} u_{i}\indic{i \in V_j}\\
&=\ffrac{1}{Kn_{V}\alpha}\sum_{i=1}^{n}u_{i}\sum_{j=1}^{K}\un{i \in V_j}
\\&= \ffrac{1}{n\alpha}\sum_{i=1}^{n}u_{i}. \end{align*}
The last line follows from Assumption \ref{assum:mask-property}. 
Hence, using Inequality \eqref{bound:z1} from the proof of Lemma \ref{lemma:inv-goix}, we obtain
\begin{equation}\label{'c_ext'}
\PP( \DevExt -1/n\alpha  \geq t ) \leq  2\exp\left(\ffrac{-n\alpha t^2}{2(4+t/3)}\right).
\end{equation}

\paragraph{Probability bound for $\DevCV$ (see~\ref{"dev_CV"}).}
First, notice that
\begin{align*}
\DevCV&=\abs{
	\ffrac{1}{K}\sum_{j=1}^{K}\big[ \Tilde{\mathcal{R}}_{\alpha}\left(\Psi_\alpha(T_j),V_j\right)-\mathcal{R}_{\alpha}\left(\Psi_\alpha(T_j)\right) \big]} \\
&\leq \ffrac{1}{K}\sum_{j=1}^{K}\sup_{g \in \mathcal{G}}\lvert \Tilde{\mathcal{R}}_{\alpha}(g,V_j)-\mathcal{R}_{\alpha}(g) \rvert. 
\end{align*}

The \rhs of the latter display is the quantity $Z$ defined in Lemma \ref{"RCV-supp"}, Equation~\ref{def:Z-CV}. As a consequence of this lemma,
\begin{equation}\label{eq:C-CV-bound}
\PP(\DevCV-B(n_{V},\alpha)\geq t) \leq \exp\left(\ffrac{-n\alpha t^2}{2(4+t/3)}\right),  
\end{equation}
with 
\[ B(n_{V},\alpha)= \ffrac{M\sqrt{\mathcal{V}_\mathcal{G}}}{\sqrt{\alpha n_{V}}}, \]
for some universal constant $M>0$.

\paragraph{Probability bounds for $\BiasCV$ (see~\ref{"bias_CV"}).} We write
\begin{align}\label{ineq:Bias-CV-decomp}
\BiasCV&= \ffrac{1}{K}  \left|\sum_{j=1}^K\Bigg(\mathcal{R}_{\alpha}(\Psi_{\alpha}(T_j))-\mathcal{R}_{\alpha}\big(\Psi_{\alpha}(S_n)\big)\Bigg)\right| \nonumber\\
&\leq C_1+C_2,
\end{align}
with
\begin{align*}
	C_1=\ffrac{1}{K}\left|\sum_{j=1}^K \left( \mathcal{R}_{\alpha}(\Psi_{\alpha}(T_j))-\widehat{\mathcal{R}}_{\alpha}( \Psi_\alpha({T_j}) , T_j) +\widehat{\mathcal{R}}_{\alpha}( \Psi_\alpha({T_j}) , T_j)-\mathcal{R}_\alpha^*\right)\right|,
\end{align*}

\begin{align*}
	C_2=\left|\mathcal{R}_\alpha^* -  \widehat{\mathcal{R}}_{\alpha}(\Psi_{\alpha}(S_n),S_n)+\widehat{\mathcal{R}}_{\alpha}(\Psi_{\alpha}(S_n),S_n) - \mathcal{R}_{\alpha}\big(\Psi_{\alpha}(S_n)\big)\right|
\end{align*}

and \[\mathcal{R}_\alpha^*=\inf_{g \in \mathcal{G}}\mathcal{R}_\alpha(g).\]
Using the fact that  $\widehat{\mathcal{R}}_{\alpha}( \Psi_\alpha({T_j}),T_j)= \inf_{g\in \mathcal{G}}\widehat{\mathcal{R}}_{\alpha}( g, T_j)$ (Assumption~\ref{assum:ERM-settings}) and for any real functions $h$ and $f$, $\lvert \inf{h}-\inf{f}\rvert \leq  \sup\lvert h-f\rvert $ , write
\begin{align*}
\ffrac{1}{K}\left|\sum_{j=1}^K\bigg( \widehat{\mathcal{R}}_{\alpha}( \Psi_\alpha({T_j}) , T_j)-\mathcal{R}_\alpha^*\bigg)\right| &=	\ffrac{1}{K}\left|\sum_{j=1}^K \inf_{g\in \mathcal{G}}\widehat{\mathcal{R}}_{\alpha}(g, T_j)- \inf_{g\in \mathcal{G}}\mathcal{R}_{\alpha}(g)\right|\\&\leq\ffrac{1}{K}\sum_{j=1}^K\left| \inf_{g\in \mathcal{G}}\widehat{\mathcal{R}}_{\alpha}( g , T_j)- \inf_{g\in \mathcal{G}}\mathcal{R}_{\alpha}(g)\right|\\&\leq\frac{1}{K}\sum_{j=1}^K\sup_{g \in \mathcal{G}}\lvert\widehat{\mathcal{R}}_{\alpha}(g,T_j)- \mathcal{R}_{\alpha}(g)\rvert.
\end{align*}

Then, by using the triangle inequality, deduce that
\begin{align}\label{ineq:C-1-decomp}
C_1
&\leq  \ffrac{2}{K}\sum_{j=1}^K\sup_{g \in \mathcal{G}}\lvert\widehat{\mathcal{R}}_{\alpha}( g,T_j)- \mathcal{R}_{\alpha}(g)\rvert \nonumber\\
&\leq 2(W_1+W_2),
\end{align}

with  
\[W_1=\ffrac{1}{K}\sum_{j=1}^K\sup_{g \in \mathcal{G}}\lvert
\widehat{\mathcal{R}}_{\alpha}( g , T_j)-
 \Tilde{\mathcal{R}}_{\alpha}(g,T_j)\rvert,\] 
\[W_2=\ffrac{1}{K}\sum_{j=1}^K\sup_{g \in \mathcal{G}}\lvert\Tilde{\mathcal{R}}_{\alpha}(g,T_j)- \mathcal{R}_{\alpha}(g)\rvert.\]
In order to bound $Z_2$ we use the fact that the statements of   Lemmas~\ref{"RCV-supp"}, \ref{"RCV-expectation"} and Corollary~\ref{"RCV-sup-control"} still hold true if one substitutes the training sets $T_j$ for the validation sets $V_j$, and the training size $n_T$ for $n_V$, as revealed by an inspection of the proofs. 
The only difference between the two arguments is in steps 2 and 4 from the proof of Lemma~\ref{"RCV-supp"} where we use the identity $	\frac{1}{K}\sum_{j=1}^{K}\ffrac{\indic{l \in T_j} }{n_{T}} 
        =\frac{1}{n} 
$ 
from Lemma~\ref{"train-test-cond"} instead of Identity~(\ref{"key-condition"}).  
Thus we obtain, from the twin statement of Corollary~\ref{"RCV-sup-control"}, 
\begin{equation}\label{ineq:BCV-Z-2}
	\PP(W_2-2B(n_{T},\alpha) \geq 2t ) \leq 2\exp\left(\ffrac{-n\alpha t^2}{2(4+t/3)}\right), 
\end{equation}
where $B(n,\alpha)$ is defined in~(\ref{"V-n-def"}).
Similarly the term $W_1$ can be bounded following the same argument as in the first paragraph (Probability bound for $\DevExt$) up to  replacing $V_j$ with $T_j$ and $n_V$ with $n_T$ :
\begin{align*}
	W_1&\leq  \ffrac{1}{Kn_{T}\alpha}\sum_{j=1}^{K}\sum_{i=1}^{n} u_{i}\indic{i \in T_j}\\
	&=\ffrac{1}{Kn_{T}\alpha}\sum_{i=1}^{n}u_{i}\sum_{j=1}^{K}\un{i \in T_j}
	\\&= \ffrac{1}{n\alpha}\sum_{i=1}^{n}u_{i}, 
\end{align*}
where the last line follow from the claim after Assumption \ref{assum:mask-property} ($\frac{1}{K}\sum_{j=1}^{K} {\indic{l \in T_j} }=\frac{n_T}{n}$). This yields using similar arguments as before
\begin{equation*}
\PP(W_1 - \frac{1}{n\alpha} \geq 2t ) \leq 4\exp\left(\ffrac{-n\alpha t^2}{2(4+t/3)}\right). 
\end{equation*}
Using the fact that $n_T\leq n$ it follows
\begin{equation}\label{ineq:BCV-Z-1}
\PP(W_1 - \frac{1}{n_T\alpha} \geq 2t ) \leq 4\exp\left(\ffrac{-n\alpha t^2}{2(4+t/3)}\right). 
\end{equation}
Decomposition (\ref{ineq:C-1-decomp}) combined with inequalities (\ref{ineq:BCV-Z-2}), (\ref{ineq:BCV-Z-1}) leads to 
\begin{equation}\label{eq:first-term-bound}
\PP\left(C_{1}-2Q(n_{T},\alpha) \geq   4t \right) \leq  6\exp\left(\frac{-n\alpha t^2}{2(4+t/3)}\right), 
\end{equation}
with 
\begin{align}
	Q(n,\alpha)&=B(n,\alpha)+\ffrac{1}{n\alpha}\nonumber\\ &=\ffrac{M\sqrt{\mathcal{V}_\mathcal{G}}}{\sqrt{\alpha n}}+\ffrac{1}{n\alpha}. \label{eq:Q-def}
\end{align}
Using the same technique, one has
\[C_2 \leq 2\sup_{g \in \mathcal{G}}\lvert\widehat{\mathcal{R}}_{\alpha}( g,S_n)- \mathcal{R}_{\alpha}(g)\rvert.\]
Then by Lemma~\ref{lemma:inv-goix}, we obtain
\begin{align}\label{eq:second-term-bound}
\PP\left(C_{2}-2Q(n,\alpha) \geq 4t \right) \leq  6\exp\left(\frac{-n\alpha t^2}{2(4+t/3)}\right). 
\end{align}
Moreover, notice that we have 
\begin{equation*}
\begin{cases}
4B(n_{T},\alpha) \geq 2B(n,\alpha) +2B(n_{T},\alpha), \\
\ffrac{4}{n_{T}\alpha}     \geq \ffrac{2}{n_{T}\alpha} + \ffrac{2}{n\alpha}.
\end{cases}
\end{equation*}
Therefore we get
\begin{equation}\label{eq:monotone-ineq}
	4Q(n_{T},\alpha) \geq 2Q(n,\alpha)+2Q(n_{T},\alpha).
\end{equation}
Combining equations (\ref{eq:first-term-bound}), (\ref{eq:second-term-bound}), (\ref{eq:monotone-ineq}) and decomposition (\ref{ineq:Bias-CV-decomp}) yields
\begin{equation}\label{"C_train_bound"}
\PP\left(\BiasCV-4Q(n_{T},\alpha)\geq 8t \right) \leq  12\exp\left(\ffrac{-n\alpha t^2}{2(4+t/3)}\right).  
\end{equation}
\paragraph{Assembling terms.}
Using equations (\ref{'c_ext'}), (\ref{eq:C-CV-bound}), (\ref{"C_train_bound"}) and the decomposition (\ref{ineq:main-decomposition}), deduce the inequality
\begin{equation}\label{ineq:CV-Base-UB}
\PP\bigg(\big|\widehat{\mathcal{R}}_{CV,\alpha}(\Psi_{\alpha},V_{1:K})-\mathcal{R}_{\alpha}\big(\Psi_{\alpha}(S_n)\big)\big| - E_{CV}(n_{T},n_{V},\alpha)\geq 10t \bigg)  \leq  15\exp\left(\ffrac{-n\alpha t^2}{2(4+t/3)}\right),  
\end{equation}
with 
\begin{align*}
&E_{CV}(n_{T},n_{V},\alpha)=B(n_{V},\alpha)+4Q(n_{T},\alpha)+\ffrac{1}{n_{T}\alpha} \nonumber  \\ &=M\sqrt{\mathcal{V}_\mathcal{G}}(\frac{1}{\sqrt{n_{V}\alpha}}+\frac{4}{\sqrt{n_{T}\alpha}})+\ffrac{5}{n_{T}\alpha}.
\end{align*}

The last line follows, using the definitions of $B$ in  \eqref{"V-n-def"}) and $Q$  in \eqref{eq:Q-def}).

By inverting inequality (\ref{ineq:CV-Base-UB}), one has, with probability $1-15\delta$,
\begin{equation*}
\bigg|\widehat{\mathcal{R}}_{CV,\alpha}(\Psi_{\alpha},V_{1:K})-\mathcal{R}_{\alpha}\big(\Psi_{\alpha}(S_n)\big) \bigg| \leq   E_{CV}(n_{T},n_{V},\alpha)+ \frac{20}{3 n \alpha}\log(\frac{1}{\delta}) + 20\sqrt{\frac{2}{n\alpha}\log(\frac{1}{\delta})},
\end{equation*}
which is the desired result. 

\subsection{Intermediate results for the proof of Theorem~\ref{theo:main-sanity-rare-lpo}}\label{sec:technicalResults}

\begin{lemma}\label{"RCV-ge-RN"}
Let $\Psi_{\alpha}$ be the ERM rule on the tail region of level $1-\alpha$ defined in (\ref{"g_alpha_j_def"}). Given a dataset	$\DD_n=(Z_1,Z_2,\dots,Z_n)\in \mathcal{Z}^n$  it holds that
\[   
\widehat{ \mathcal{R}}_{\alpha}(\Psi_{\alpha}(S_n),S_n) \leq  \widehat{\mathcal{R}}_{CV,\alpha}(\Psi_{\alpha},V_{1:K}).
\]
In other words the \CV risk estimate of the ERM rule cannot be less than the empirical risk evaluated on the full dataset. 
\end{lemma}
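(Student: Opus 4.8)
The plan is to exploit two structural facts about the estimator $\widehat{\mathcal{R}}_{\alpha}(g,S)$. First, the indicator threshold $\lVert X_{(\lfloor \alpha n\rfloor)}\rVert$ is computed \emph{once} on the full sample and is therefore a single data-dependent number shared by every subsample $S$; consequently the map $S\mapsto \widehat{\mathcal{R}}_{\alpha}(g,S)$ is additive over disjoint index sets. Second, the two empirical risk minimizers involved each satisfy an optimality inequality. Writing $g^\star=\Psi_{\alpha}(S_n)$ and $g_j=\Psi_{\alpha}(T_j)$, and recalling that the mask property forces $\mathrm{card}(V_j)=n_V$ and $\mathrm{card}(T_j)=n_T=n-n_V$ for every $j$, the first step is the convex-combination identity, valid for every $g$ and every $j$ since $V_j$ and $T_j$ partition $S_n$:
\[
\widehat{\mathcal{R}}_{\alpha}(g,S_n)=\frac{n_T}{n}\,\widehat{\mathcal{R}}_{\alpha}(g,T_j)+\frac{n_V}{n}\,\widehat{\mathcal{R}}_{\alpha}(g,V_j).
\]

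The second step is to specialize this identity to $g=g_j$, solve for the validation term, and then lower-bound the right-hand side using the two ERM inequalities $\widehat{\mathcal{R}}_{\alpha}(g_j,S_n)\geq \widehat{\mathcal{R}}_{\alpha}(g^\star,S_n)$ (optimality of $g^\star$ on the full sample) and $\widehat{\mathcal{R}}_{\alpha}(g_j,T_j)\leq \widehat{\mathcal{R}}_{\alpha}(g^\star,T_j)$ (optimality of $g_j$ on $T_j$). Since $n/n_V$ and $n_T/n_V$ are positive, this yields
\[
\widehat{\mathcal{R}}_{\alpha}(g_j,V_j)\;\geq\;\frac{n}{n_V}\,\widehat{\mathcal{R}}_{\alpha}(g^\star,S_n)-\frac{n_T}{n_V}\,\widehat{\mathcal{R}}_{\alpha}(g^\star,T_j),
\]
and averaging over $j=1,\dots,K$ turns the left-hand side into exactly $\widehat{\mathcal{R}}_{CV,\alpha}(\Psi_{\alpha},V_{1:K})$.

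The third step is to collapse the averaged training error. Here I would invoke the training-set balance identity of Lemma~\ref{"train-test-cond"}, namely $\frac{1}{K}\sum_{j}\indic{l\in T_j}/n_T=1/n$ for all $l$. Interchanging the order of summation and using that $g^\star$ is common to all folds (so the summand $c(g^\star,O_i)\indic{\lVert X_i\rVert>\lVert X_{(\lfloor \alpha n\rfloor)}\rVert}$ does not depend on $j$ and each index $i$ receives total weight $1/n$) gives $\frac{1}{K}\sum_j\widehat{\mathcal{R}}_{\alpha}(g^\star,T_j)=\widehat{\mathcal{R}}_{\alpha}(g^\star,S_n)$. Substituting this back and using $n-n_T=n_V$ produces
\[
\widehat{\mathcal{R}}_{CV,\alpha}\;\geq\;\frac{n}{n_V}\widehat{\mathcal{R}}_{\alpha}(g^\star,S_n)-\frac{n_T}{n_V}\widehat{\mathcal{R}}_{\alpha}(g^\star,S_n)=\widehat{\mathcal{R}}_{\alpha}(g^\star,S_n),
\]
which is the claim.

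Each step is individually elementary, so the only real subtlety—and the point I would be most careful to state explicitly—is the opening observation that the order-statistic threshold is a single shared quantity: this is precisely what makes the convex decomposition in step one \emph{exact} and lets the two ERM inequalities point in compatible directions. The other place needing care is step three, where the cancellation relies on the \emph{training-set} balance identity (not the validation-set one) together with $g^\star$ being fold-independent; replacing $g^\star$ there by the fold-dependent $g_j$ would break the interchange of sums. No complexity, boundedness, or regular-variation assumption is needed—this is a purely algebraic consequence of the ERM definition and the mask balance conditions.
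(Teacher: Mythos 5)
Your proof is correct and follows essentially the same route as the paper's: the convex decomposition of $\widehat{\mathcal{R}}_{\alpha}(g,S_n)$ over $(T_j,V_j)$ combined with the two ERM optimality inequalities, followed by averaging over folds via a mask balance identity. The only cosmetic difference is that your per-fold lower bound $\frac{n}{n_V}\widehat{\mathcal{R}}_{\alpha}(g^\star,S_n)-\frac{n_T}{n_V}\widehat{\mathcal{R}}_{\alpha}(g^\star,T_j)$ is algebraically equal to the paper's $\widehat{\mathcal{R}}_{\alpha}(g^\star,V_j)$, so you invoke the training-set balance identity where the paper uses the validation-set one — both are supplied by Lemma~\ref{"train-test-cond"}.
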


\begin{proof}
The argument for $\alpha\neq 1$ is the same as the one for
 $\alpha=1$ (standard ERM) which may be found in \cite{kearns1999algorithmic}. We reproduce
  it for the sake of completeness. By definition of
  $\Psi_{\alpha}(S_n)$, one has

\begin{equation*}
  \forall j \in \llbracket 1,n \rrbracket \:,\: \widehat{\mathcal{R}}_{\alpha}(\Psi_{\alpha}(T_j),S_n) \geq \widehat{\mathcal{R}}_{\alpha}(\Psi_{\alpha}(S_n),S_n),
\end{equation*}

since $\widehat{\mathcal{R}}_{\alpha}(g,S_n)= \ffrac{1}{n}\left( n_{V}\widehat{\mathcal{R}}_{\alpha}(g,V_j)+n_{T}\widehat{\mathcal{R}}_{\alpha}(g,T_j) \right)$, for $g\in \mathcal{G}$. It follows that
\begin{align*}
	\ffrac{1}{n}\left( n_{V}\underbrace{\widehat{\mathcal{R}}_{\alpha}(\Psi_\alpha(T_j),V_j)}_{\text{validation error}}+n_{T}\underbrace{\widehat{\mathcal{R}}_{\alpha}(\Psi_{\alpha}(T_j),T_j)}_{\text{training error}}  \right) \geq \ffrac{1}{n}\big( n_{V}\widehat{\mathcal{R}}_{\alpha}(&\Psi_{\alpha}(S_n),V_j)\\& +  n_{T}\widehat{\mathcal{R}}_{\alpha}(\Psi_{\alpha}(S_n),T_j) \big).
\end{align*}

Since $\Psi_{\alpha}(T_j)$ minimizes the training error on the $j$'th training set $T_j$, in particular we have $$\widehat{\mathcal{R}}_{\alpha}(\Psi_{\alpha}(S_n),T_j) \geq  \widehat{\mathcal{R}}_{\alpha}(\Psi_{\alpha}(T_j),T_j),$$ hence 
\begin{equation}\label{ineq:cv-er-base}
	\widehat{\mathcal{R}}_{\alpha}(\Psi_{\alpha}(S_n),V_j) \leq  \widehat{\mathcal{R}}_{\alpha}(\Psi_{\alpha}(T_j),V_j)\:,\:\forall j \in \llbracket 1 , K \rrbracket.
\end{equation}

In addition the average empirical risk of $\Psi_\alpha(S_n)$ is equal to the empirical risk on the full dataset, indeed
\begin{align*}
	\ffrac{1}{K}\bigg(\sum_{j=1}^{K}\widehat{\mathcal{R}}_{\alpha}(\Psi_{\alpha}(S_n),V_j)\bigg)&=\ffrac{1}{Kn_{V}}\bigg(\sum_{j=1}^{K}\sum_{i\in V_j} c\big(\Psi_{\alpha}(S_n),O_i)\un{\lVert X_{i}\rVert>\lVert X_{(\lfloor \alpha n \rfloor\big)}\rVert }\bigg)\\
	&\hspace{-8mm}=\ffrac{1}{Kn_{V}}\bigg(\sum_{j=1}^{K}\sum_{i=1}^{n}c\big(\Psi_{\alpha}(S_n),O_i\big)\un{i \in V_j}\un{\lVert X_{i}\rVert>\lVert X_{(\lfloor \alpha n \rfloor)}\rVert }\bigg)\\
	&\hspace{-8mm}=\ffrac{1}{Kn_{V}}\bigg(\sum_{i=1}^{n}c\big(\Psi_{\alpha}(S_n),O_i\big)\un{\lVert X_{i}\rVert>\lVert X_{(\lfloor \alpha n \rfloor)}\rVert }\bigg)\sum_{j=1}^{K}\un{i \in V_j}\\
	(\text{By Assumption }\ref{assum:mask-property})&=\widehat{\mathcal{R}}_{\alpha}\left(\Psi_{\alpha}(S_n),S_n\right).
\end{align*}

Thus  by averaging Inequality \eqref{ineq:cv-er-base}, we get
\[   
\widehat{ \mathcal{R}}_{\alpha}(\Psi_{\alpha}(S_n),S_n) \leq  \widehat{\mathcal{R}}_{CV,\alpha}(\Psi_{\alpha},V_{1 : K}).
\]
\end{proof}
The following lemma is used in the proof of our second main result concerning \lpo risk estimation, see Inequality~(\ref{eq:keyTheorem2fromLemma}). It is a generalization of  Markov inequality that is particularly useful for  cross-validation estimates. Our proof shares similarities with the proof of Theorem~4.1 in~\cite{kearns1999algorithmic} formulated under general algorithmic stability assumptions. 
\begin{lemma}\label{"RCV-markov"}
In the setting of Theorem \ref{theo:main-sanity-rare}, we have 
\begin{align*}
	\PP(\rcvEx(\Psi_{\alpha},V_{1 : K}) - \mathcal{R}_{CV,\alpha}(\Psi_{\alpha},V_{1 : K})  \geq &t ) \leq \\
	&\hspace{-10mm}\ffrac{\EE(\DevExt+\BiasCV+\abs{ \hat\risk_\alpha(\Psi_{\alpha}(S_n),S_n) - \risk_\alpha(\Psi_{\alpha}(S_n)) })}{t},
\end{align*}

where  $\BiasCV$ (resp $\DevExt$) is defined by Equation~\ref{"bias_CV"} (\emph{resp.} Equation~\ref{"dev_ext"}).
\end{lemma}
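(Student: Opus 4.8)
The plan is to prove this one‑sided deviation bound by reducing it, \emph{via} Markov's inequality, to a bound on the expectation of the \emph{positive part} of the deviation, and then to control that expectation by splitting it into a ``mean'' piece (small purely by unbiasedness) and a ``negative excursion'' piece (controlled pointwise by the \ERM structure). Throughout I write $A=\rcvEx(\Psi_{\alpha},V_{1:K})-\mathcal{R}_{CV,\alpha}(\Psi_{\alpha},V_{1:K})$ and let $A_+=\max(A,0)$, $A_-=\max(-A,0)$, so that $A_+=A+A_-$ with $A_-\geq 0$. Since $A_+\geq 0$, Markov's inequality gives, for $t>0$,
\[ \PP(A\geq t)=\PP(A_+\geq t)\leq \EE(A_+)/t=\big(\EE(A)+\EE(A_-)\big)/t . \]
It therefore suffices to establish the two expectation bounds $\EE(A)\leq \EE(\DevExt)$ and $\EE(A_-)\leq \EE\big(\BiasCV+|\hat\risk_\alpha(\Psi_\alpha(S_n),S_n)-\risk_\alpha(\Psi_\alpha(S_n))|\big)$, and then add them.

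For the first bound, I would route through the pseudo-empirical \CV risk $\Tilde{\mathcal{R}}_{CV,\alpha}$ from~\eqref{eq:pseudo-rcv-def}, writing $A=\big(\rcvEx-\Tilde{\mathcal{R}}_{CV,\alpha}\big)+\big(\Tilde{\mathcal{R}}_{CV,\alpha}-\mathcal{R}_{CV,\alpha}\big)$. The first bracket is at most $\DevExt$ in absolute value by the very definition~\eqref{"dev_ext"}. The second bracket has mean zero: conditioning on each training set $T_j$, which is independent of the validation set $V_j$, and using that $t_\alpha$ is the true $(1-\alpha)$-quantile so that $\PP(\|X\|>t_\alpha)=\alpha$, each term $\Tilde{\mathcal{R}}_\alpha(\Psi_\alpha(T_j),V_j)$ is conditionally unbiased for $\mathcal{R}_\alpha(\Psi_\alpha(T_j))$; averaging over $j$ yields $\EE(\Tilde{\mathcal{R}}_{CV,\alpha})=\EE(\mathcal{R}_{CV,\alpha})$. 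Hence $\EE(A)=\EE\big(\rcvEx-\Tilde{\mathcal{R}}_{CV,\alpha}\big)\leq \EE(\DevExt)$.

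For the second bound, I would invoke Lemma~\ref{"RCV-ge-RN"}, which guarantees $\rcvEx\geq \hat\risk_\alpha(\Psi_\alpha(S_n),S_n)$; this is precisely the ingredient behind~\eqref{eq:decomposeNegativeDeviation-lpo}, so that on every realization
\[ \mathcal{R}_{CV,\alpha}-\rcvEx\leq \mathcal{R}_{CV,\alpha}-\hat\risk_\alpha(\Psi_\alpha(S_n),S_n)\leq \BiasCV+|\risk_\alpha(\Psi_\alpha(S_n))-\hat\risk_\alpha(\Psi_\alpha(S_n),S_n)|, \]
the last step being the triangle inequality through $\risk_\alpha(\Psi_\alpha(S_n))$ together with the definition~\eqref{"bias_CV"} of $\BiasCV$. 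As the right-hand side is nonnegative, the same bound holds for $A_-=(\mathcal{R}_{CV,\alpha}-\rcvEx)_+$, and taking expectations gives the required control. Combining the two expectation bounds with the Markov step above closes the argument.

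The crux, and the reason this detour is needed at all, is that a naive Markov bound on $\EE|A|$ would force us to \emph{concentrate} the validation-set average $\Tilde{\mathcal{R}}_{CV,\alpha}-\mathcal{R}_{CV,\alpha}$, reintroducing exactly the unwanted $1/\sqrt{n_V\alpha}$ scaling that Theorem~\ref{theo:main-sanity-rare-lpo} is designed to avoid. The decomposition $A_+=A+A_-$ sidesteps this: the upward deviation is absorbed into the \emph{mean} of $A$, which is small by unbiasedness alone with no concentration required, while the downward deviation is tamed \emph{pointwise} by the \ERM inequality of Lemma~\ref{"RCV-ge-RN"}. I expect the only delicate bookkeeping to be keeping the order-statistic threshold separate from the population threshold $t_\alpha$ when justifying the unbiasedness, which is exactly what the intermediate quantity $\Tilde{\mathcal{R}}_{CV,\alpha}$ and the term $\DevExt$ are there to handle.
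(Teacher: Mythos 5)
Your proof is correct and follows essentially the same route as the paper: your bound $\PP(A\geq t)\leq \EE(A_+)/t=(\EE(A)+\EE(A_-))/t$ is just a repackaging of the paper's generalized Markov inequality $\PP(W\geq t)\leq (\EE(W)-\EE(W\un{W<t}))/t$, and the two ingredients — unbiasedness of $\Tilde{\mathcal{R}}_{CV,\alpha}$ to reduce $\EE(A)$ to $\EE(\DevExt)$, and Lemma~\ref{"RCV-ge-RN"} plus the triangle inequality to dominate the negative excursion by $\BiasCV+|\hat\risk_\alpha-\risk_\alpha|$ — are exactly the paper's. Your closing remark on why concentration of the validation average is avoided matches the paper's motivation for this lemma.
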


\begin{proof}
Set 
$\rcvEx=\rcvEx(\Psi_{\alpha},V_{1 : K})$, $\mathcal{R}_{CV,\alpha}=\mathcal{R}_{CV,\alpha}(\Psi_{\alpha},V_{1 : K})$ , $  \hat\risk_\alpha=\hat\risk_\alpha(\Psi_{\alpha}(S_n),S_n)$, $ \risk_\alpha=\risk_\alpha(\Psi_{\alpha}(S_n))$.\\
 For any integrable real valued random variable $L$, and any $t>0$ write  
  $$\EE[L]= \PP[L\ge t]\EE[L \,|\, L\ge t] + \EE[L \un{L<t}].$$ Reorganising, we obtain the following generalized Markov inequality, 
  \[
    \PP[L\geq t] = \frac{\EE[L] - \EE[L \un{L<t}]}{\EE[L \,|\, L\ge t]}
    \le \frac{\EE(L) - \EE(L \un{L<t})}{t}. 
  \]
  Letting $L = \rcvEx-\mathcal{R}_{CV,\alpha}$ we obtain
\begin{align}\label{"exp-CV-one"}
\PP(\rcvEx-\mathcal{R}_{CV,\alpha}&\geq t)=\ffrac{\EE(\rcvEx-\mathcal{R}_{CV,\alpha})}{t}\nonumber\\&\hspace{5em} \frac{-\EE\left[(\rcvEx-\mathcal{R}_{CV,\alpha}) \un{\rcvEx-\mathcal{R}_{CV,\alpha} \leq t} \right]}{t}.
\end{align}

Using the fact that $\EE(\tilde{\mathcal{R}}_{CV,\alpha}-\risk_{CV,\alpha})=0$ and that $\DevExt=\abs{\rcvEx-\tilde{\mathcal{R}}_{CV,\alpha}}$, one gets
\begin{align}
\EE(\rcvEx-\mathcal{R}_{CV,\alpha})&=\EE(\rcvEx-\tilde{\mathcal{R}}_{CV,\alpha}) \nonumber \\ &\leq \EE(\DevExt).
\label{"exp-CV-two"}
\end{align} 
Now using lemma  \ref{"RCV-ge-RN"} write
\begin{align}
\EE\left[(\mathcal{R}_{CV,\alpha}-\rcvEx) \ind_{\rcvEx-\mathcal{R}_{CV,\alpha} \leq t} \right] &\leq \EE\left[(\mathcal{R}_{CV,\alpha}-\widehat\risk_\alpha) \ind_{\rcvEx-\mathcal{R}_{CV,\alpha} \leq t} \right] \nonumber \\ & \leq \EE\left[\abs{\mathcal{R}_{CV,\alpha}-\widehat\risk_\alpha} \ind_{\rcvEx-\mathcal{R}_{CV,\alpha} \leq t} \right]  \nonumber \\ & 
\leq  \EE\left[\abs{\mathcal{R}_{CV,\alpha}-\widehat\risk_\alpha} \right] \nonumber \\ &
\leq \EE\left[\abs{\mathcal{R}_{CV,\alpha}-\risk_\alpha} \right]+ \EE\left[\abs{\risk_\alpha-\widehat\risk_\alpha} \right] \nonumber \\
&=\EE\left[\BiasCV \right]+ \EE\left[\abs{\risk_\alpha-\widehat\risk_\alpha} \right].
\label{"exp-CV-three"}
\end{align}

Where the $\BiasCV$ term in the last line is defined in \eqref{"bias_CV"}. Combining Inequality~\ref{"exp-CV-one"} with equations~\ref{"exp-CV-two"} and~\ref{"exp-CV-three"} yields
	\[\PP(\rcvEx-\mathcal{R}_{CV,\alpha} \geq t ) \leq \ffrac{\EE(\DevExt+\BiasCV+\abs{\hat\risk_\alpha- \risk_{\alpha}})}{t}, \]
which concludes the proof.
\end{proof}

\subsection{Proof of Theorem~\ref{theo:main-sanity-rare-lpo}}\label{"main-theo-lpo-proof"}

In view of the discussion following the statement of the theorem (namely the risk decomposition~(\ref{"basic-inequality"}) and the Markov-type inequality~(\ref{eq:keyTheorem2fromLemma})) and the bound for the term $\BiasCV$ obtained in \eqref{"C_train_bound"}, we only need to obtain bounds for the expectations $\EE\big(\abs{\hat\risk_\alpha(\Psi_\alpha(S_n),S_n)- \risk_{\alpha}(\Psi_\alpha(S_n))}\big)$, $\EE(\DevExt)$,
and  $\EE(\BiasCV)$  .
The proof will then be completed by combining together the different terms. 

	\paragraph{Bounding $\mathbb{E}(\DevExt)$.} By Equation~\ref{'c_ext'}, one has 
	\begin{equation*}
	\PP( \DevExt -\frac{1}{n\alpha} \geq t ) \leq  \exp\left(\ffrac{-n\alpha t^2}{2(4+t/3)}\right) \: .
	\end{equation*}
On the one hand, under Assumption~\ref{assum:cost-func} one has $\PP(\DevExt -\frac{1}{n\alpha}  \geq t) =0$ for $t \geq 2$. On the other hand, the following inequality holds,
	\[\forall t \leq 2 \: , \: 2(4+t/3) \leq 10 \: .\]  
	Hence we may write, for $t\geq0$, 
	\begin{equation}\label{ineq:C-ext-larger-bound}
	 \PP( \DevExt -\frac{1}{n\alpha}   \geq t ) \leq  \exp\left(\ffrac{-n\alpha t^2}{10}\right) \: .
	\end{equation}
	Therefore by Proposition~\ref{"Expectation-UB-subgauss"}, we get
	\begin{align}\label{"exp_c_ext"}
	\EE(\DevExt) &\leq \frac{1}{n\alpha} + \frac{M_1}{\sqrt{n\alpha}} \nonumber\\ & \leq \frac{1}{n_{T}\alpha} + \frac{M_1}{\sqrt{n_{T}\alpha}} \:,
	\end{align}
	for some universal constant $M_1  > 0$.
	
	\paragraph{Bounding $\mathbb{E}(\BiasCV)$.} 
	Using Equation~\ref{"C_train_bound"} and reasoning as in the previous paragraph  leads to
	\begin{align}\label{"exp_c_train"}
	\EE(\BiasCV) \leq    4Q(n_{T},\alpha)+\frac{M_2}{\sqrt{n_{T}\alpha}},
	\end{align}
	where $Q(n,\alpha)$ is defined by \eqref{eq:Q-def} and $M_2>0$ is a universal constant, independent of $\mathcal{G}$,\,$n$ and $\alpha$.
	\paragraph{Bounding $\EE(\abs{\hat\risk_\alpha(\Psi_{\alpha}(S_n),S_n)- \risk_{\alpha}(\Psi_{\alpha}(S_n))})$.} By Lemma~\ref{lemma:inv-goix} we obtain 
	\begin{align}\label{"risk_alpha_bound"}	
	\PP( \big|\hat\risk_\alpha(\Psi_{\alpha}(S_n),S_n)- \risk_{\alpha}(\Psi_\alpha(S_n))\big|  - Q(n,\alpha)  \geq t )  \leq 3\exp\left(\frac{-n\alpha t^2}{2(4+t/3)}\right)\:.
\end{align}
	Then we get
	\begin{align}\label{"exp_c_ER"}
	\EE(\big|\hat\risk_\alpha(\Psi_{\alpha}(S_n),&S_n)- \risk_{\alpha}(\Psi_{\alpha}(S_n))\big|) \nonumber\\&\leq Q(n,\alpha) +\frac{M_3}{\sqrt{n\alpha}} \nonumber 
	\\& \leq  Q(n_{T},\alpha) +\frac{M_3}{\sqrt{n_{T}\alpha}} \:,
	\end{align}
for some universal constant $M_3>0$. 

Combining equations \eqref{"exp_c_ext"}, \eqref{"exp_c_train"}, \eqref{"exp_c_ER"} with Lemma \ref{"RCV-markov"} gives 
\begin{equation}\label{"Markov-CV-ineq"}
\PP(\widehat{ \mathcal{R}}_{CV,\alpha}(\Psi_{\alpha},V_{1:K})-\mathcal{R}_{CV,\alpha}(\Psi_{\alpha},V_{1:K}) \geq t ) \leq  \frac{5Q(n_{T},\alpha) +\ffrac{M_4}{\sqrt{n_{T}\alpha}}}{t} 
+ \ffrac{1/(n_{T}\alpha)}{t}\:,
\end{equation} 
where $M_4=M_1+M_2+M_3$. The next step is to derive a probability bound for $\mathcal{R}_{CV,\alpha}(\Psi_{\alpha},V_{1:K})-\widehat{ \mathcal{R}}_{CV,\alpha}(\Psi_{\alpha},V_{1:K})$. We have
\begin{align}
\PP(\mathcal{R}_{CV,\alpha}(\Psi_{\alpha},V_{1:K})-\widehat{ \mathcal{R}}_{CV,\alpha}(&\Psi_{\alpha},V_{1:K})
-5Q(n_{T},\alpha) \geq 9t )  \nonumber\\
&\hspace{-8mm}\leq	\PP(\mathcal{R}_{CV,\alpha}(\Psi_{\alpha},V_{1:K})-\widehat{ \mathcal{R}}_{\alpha}(\Psi_{\alpha}(S_n),S_n)
-5Q(n_{T},\alpha) \geq 9t )	\nonumber \\
&\hspace{-8mm}\leq  \PP(\mathcal{R}_{CV,\alpha}(\Psi_{\alpha},V_{1:K})-\risk_{\alpha}(\Psi_{\alpha}(S_n))-4Q(n_{T},\alpha)\geq 8t )\nonumber\\&\hspace{3mm} + \PP(\risk_{\alpha}(\Psi_{\alpha}(S_n))-\widehat\risk_\alpha(\Psi_{\alpha}(S_n),S_n) \nonumber-Q(n_{T},\alpha)\geq t)	\nonumber \\
&\hspace{-8mm}\leq \PP(\big|\widehat\risk_\alpha(\Psi_{\alpha}(S_n),S_n))- \risk_{\alpha}(\Psi_{\alpha}(S_n))\big|-Q(n_{T},\alpha) \geq t) \nonumber  \\& \hspace{5mm}+\PP(\BiasCV -4Q(n_{T},\alpha)\geq 8t )\nonumber\\
(\text{By  } \eqref{"C_train_bound"} + \eqref{"risk_alpha_bound"})&\leq 15\exp\left(\ffrac{-n\alpha t^2}{2(4+t/3)}\right) . \label{"neg-Markov-CV-ineq"}
\end{align}
The first inequality follows from the fact that $\widehat{ \mathcal{R}}_{CV,\alpha}(\Psi_{\alpha},V_{1:K}) \geq  \widehat{ \mathcal{R}}_{\alpha}(\Psi_{\alpha}(S_n),S_n)$ (lemma \ref{"RCV-ge-RN"}). The second inequality is obtained by a union bound. The third inequality follows from the definition of $\BiasCV$ in \eqref{"bias_CV"}. Combining \eqref{"Markov-CV-ineq"}, \eqref{"neg-Markov-CV-ineq"} and that  
\begin{align*}
\PP(\abs{X}-5Q(n_{T},\alpha)\geq 9t) &\leq \PP(X -5Q(n_{T},\alpha)\geq 9t)+ \PP(-X-5Q(n_{T},\alpha)\geq 9t)\\ &\leq    \PP(X\geq 9t)+ \PP(-X-5Q(n_{T},\alpha)\geq 9t),
\end{align*}
leads to 
\begin{align}\label{ineq: markov-cv-dev}
\nonumber &\PP\bigg(\big|\widehat{ \mathcal{R}}_{CV,\alpha}(\Psi_{\alpha},V_{1:K})-\mathcal{R}_{CV,\alpha}(\Psi_{\alpha},V_{1:K}) \big| -5Q(n_{T},\alpha) \geq 9t \bigg)  \\
&\leq  \frac{5Q(n_{T},\alpha) }{t} +\frac{M_4/\sqrt{n_{T}\alpha} }{t} + \ffrac{(1/n_{T}\alpha)}{t}  + 15\exp\left(\ffrac{-n\alpha t^2}{2(4+t/3)}\right) \: .
\end{align}
Finally, using \eqref{"basic-inequality"}, we get
\begin{align}\label{ineq:UB-MARkov-LPO}
\nonumber &\PP\Big(\big|\widehat{ \mathcal{R}}_{CV,\alpha}(\Psi_{\alpha},V_{1:K})-\mathcal{R}_{\alpha}\big(\Psi_{\alpha}(S_n)\big)\big| -9Q(n_{T},\alpha) \geq 17t\Big)\\
\nonumber &\leq\PP(\BiasCV-4Q(n_{T},\alpha)\geq 8t)  \\&\hspace{3mm}+\PP\Big(\big|\widehat{ \mathcal{R}}_{CV,\alpha}(\Psi_{\alpha},V_{1:K})-\mathcal{R}_{CV,\alpha}(\Psi_{\alpha},V_{1:K})\big|-5Q(n_{T},\alpha)\geq 9t\Big) \\
& \leq \frac{5Q(n_{T},\alpha) +(M_4/\sqrt{n_{T}\alpha})}{t}
+\ffrac{1/(n_{T}\alpha)}{t}+ 27\exp\left(\ffrac{-n\alpha t^2}{2(4+t/3)}\right) .
\end{align}
The last line follows from \eqref{"C_train_bound"} and \eqref{ineq: markov-cv-dev}. Since for any $t\geq 2$,
\begin{equation*}
\PP\Big(\big|\widehat{ \mathcal{R}}_{CV,\alpha}(\Psi_{\alpha},V_{1:K})-\mathcal{R}_{\alpha}\big(\Psi_{\alpha}(S_n)\big)\big| -9Q(n_{T},\alpha) \geq 17t\Big)=0 
\end{equation*}
we can restrict our attention to the case $t\leq 2$, for which  we have
\[ 27\exp\left(\ffrac{-n\alpha t^2}{2(4+t/3)}\right) \leq 27\exp\left(\ffrac{-n\alpha t^2}{10}\right) . \]
Using that $\exp(-x)\leq \ffrac{1}{x}$ for $x \geq 0$, we deduce that
\begin{align*}
27\exp\left(\ffrac{-n\alpha t^2}{2(4+t/3)}\right)  \leq \frac{270}{n\alpha t^2}.
\end{align*}
Using the latter inequality and inverting \eqref{ineq:UB-MARkov-LPO}, we get that with probability $1-17\delta$,
\begin{align*}
\big|\widehat{ \mathcal{R}}_{CV,\alpha}(\Psi_{\alpha},V_{1:K})-\mathcal{R}_{\alpha}\big(\Psi_{\alpha}(S_n)\big)\big| \leq 9Q(n_{T},\alpha) +&\frac{5Q(n_{T},\alpha) +  (M_4/\sqrt{n_{T}\alpha})+(1/n_{T}\alpha)}{\delta}\\&\hspace{12mm} +\sqrt{\frac{270}{n_{T}\alpha\delta}},
\end{align*}
Using the fact that  $ \sqrt{\ffrac{1}{\delta}}\leq\ffrac{1}{\delta}$ (since $\delta \leq 1$), the latter inequality becomes:
\begin{align*}
|\widehat{ \mathcal{R}}_{CV,\alpha}(\Psi_{\alpha},V_{1:K})-\mathcal{R}_{\alpha}\big(\Psi_{\alpha}(S_n)\big) | \leq 
9Q(n_{T},\alpha)+
\ffrac{1}{\delta\sqrt{n_{T}\alpha}}(5Q(n_{T},\alpha)+M_5)+\ffrac{1}{\delta n_{T}\alpha},
\end{align*}
with $M_5=M_4+\sqrt{270}$. Replacing $Q$ with its expression (Equation~\ref{eq:Q-def}) gives the desired result.



\end{document}